\documentclass[a4paper, 12pt]{article}
\usepackage{amsmath, amsthm, amsfonts, enumerate, fullpage}
\usepackage[all]{xy}
\newtheorem*{thm*}{Main Theorem}

\newtheorem{thm}{Theorem}[section]
\newtheorem{lem}[thm]{Lemma}
\newtheorem{cor}[thm]{Corollary}
\newtheorem{con}[thm]{Conjecture}
\newtheorem{prop}[thm]{Proposition}

\newtheorem{obs}[thm]{Observation}
\newtheorem{ques}[thm]{Question}
\newtheorem{case}{Case}

\numberwithin{subcase}{case}
\newtheorem*{claim}{Claim}
\theoremstyle{definition}

\theoremstyle{remark}

\newenvironment{definition}[1][Definition.]{\begin{trivlist}
\item[\hskip \labelsep {\bfseries #1}]}{\end{trivlist}}

\begin{document}
\title{Connectedness and Hamiltonicity of graphs on vertex colorings}
\author{Daniel C. McDonald}
%\date{August 17, 2009}
\maketitle 
\setlength{\parskip}{1\baselineskip}
\begin{abstract}
Given a graph $H$, let $G^j_k(H)$ be the graph whose vertices are the proper $k$-colorings of $H$, with edges joining two colorings if $H$ contains a connected subgraph on at most $j$ vertices that includes all vertices where the colorings differ.  Properties of $G^1_k(H)$ have been investigated before, including connectedness (see \cite{CHJ}) and Hamiltonicity (see \cite{CM}).  We introduce and study the parameters $g_k(H)$ and $h_k(H)$, which denote the minimum $j$ such that $G^j_k(H)$ is connected or Hamiltonian, respectively.
\end{abstract}

\section{Introduction} 
\label{sec:grayintro}
For a positive integer $k$, set $[k]=\{1,\ldots,k\}$.  For a graph $H$, let $V(H)$ denote the vertex set of $H$, and let $E(H)$ denote the edge set of $H$.  A \emph{proper $k$-coloring} of $H$ is a function $\phi:V(H)\rightarrow [k]$ such that $\phi(u)\neq\phi(v)$ if $uv\in E(G)$.  If $H$ has a proper $k$-coloring, then we say that $H$ is \emph{$k$-colorable}.  The \emph{chromatic number} of $H$, denoted $\chi(H)$, is the least $k$ for which $H$ is $k$-colorable.

Suppose we have a proper $k$-coloring $\phi$ of a graph $H$, but we want to see what other proper $k$-colorings of $H$ look like.  We could generate such colorings by first coloring $H$ according to $\phi$ and then applying the following \emph{mixing process}: pick any vertex $v\in V(H)$, change the color on $v$ while maintaining a proper coloring (if possible), and repeat.  See Figure \ref{mixproc} for an example of the mixing process applied to a $3$-colorable graph $H$. 
\begin{figure}[h]
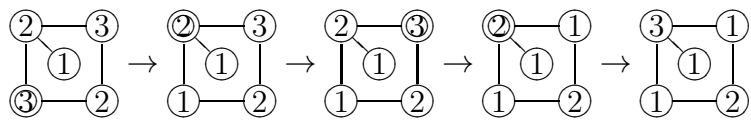

\centering
\[
\xygraph{ !{<0cm,0cm>;<.5cm,0cm>:<0cm,.5cm>::}
!{(0,0) }*+[o][F-:<3pt>]{1}="a"
!{(-1,1) }*+[o][F-:<3pt>]{2}="b"
!{(1,1) }*+[o][F-:<3pt>]{3}="c"
!{(1,-1) }*+[o][F-:<3pt>]{2}="d"
!{(-1,-1) }*+[o][F=:<3pt>]{3}="e"
"a"-"b"-"c"-"d"-"e"-"b"
}
\rightarrow
\xygraph{ !{<0cm,0cm>;<.5cm,0cm>:<0cm,.5cm>::}
!{(0,0) }*+[o][F-:<3pt>]{1}="a"
!{(-1,1) }*+[o][F=:<3pt>]{2}="b"
!{(1,1) }*+[o][F-:<3pt>]{3}="c"
!{(1,-1) }*+[o][F-:<3pt>]{2}="d"
!{(-1,-1) }*+[o][F-:<3pt>]{1}="e"
"a"-"b"-"c"-"d"-"e"-"b"
}
\rightarrow
\xygraph{ !{<0cm,0cm>;<.5cm,0cm>:<0cm,.5cm>::}
!{(0,0) }*+[o][F-:<3pt>]{1}="a"
!{(-1,1) }*+[o][F-:<3pt>]{2}="b"
!{(1,1) }*+[o][F=:<3pt>]{3}="c"
!{(1,-1) }*+[o][F-:<3pt>]{2}="d"
!{(-1,-1) }*+[o][F-:<3pt>]{1}="e"
"a"-"b"-"c"-"d"-"e"-"b"
}
\rightarrow
\xygraph{ !{<0cm,0cm>;<.5cm,0cm>:<0cm,.5cm>::}
!{(0,0) }*+[o][F-:<3pt>]{1}="a"
!{(-1,1) }*+[o][F=:<3pt>]{2}="b"
!{(1,1) }*+[o][F-:<3pt>]{1}="c"
!{(1,-1) }*+[o][F-:<3pt>]{2}="d"
!{(-1,-1) }*+[o][F-:<3pt>]{1}="e"
"a"-"b"-"c"-"d"-"e"-"b"
}
\rightarrow
\xygraph{ !{<0cm,0cm>;<.5cm,0cm>:<0cm,.5cm>::}
!{(0,0) }*+[o][F-:<3pt>]{1}="a"
!{(-1,1) }*+[o][F-:<3pt>]{3}="b"
!{(1,1) }*+[o][F-:<3pt>]{1}="c"
!{(1,-1) }*+[o][F-:<3pt>]{2}="d"
!{(-1,-1) }*+[o][F-:<3pt>]{1}="e"
"a"-"b"-"c"-"d"-"e"-"b"
}
\]
\caption{The mixing process.}
\label{mixproc}
\end{figure}

Let the \emph{$k$-color graph} of $H$, denoted $G_k(H)$, have the proper $k$-colorings of $H$ as its vertices, with two colorings adjacent whenever they differ on exactly one vertex.  We can obtain all proper $k$-colorings of $H$ using the mixing process if and only if $G_k(H)$ is connected.  The connectedness of $G_k(H)$ arises in the study of efficient algorithms for almost-uniform sampling of $k$-colorings; see \cite{J1} and \cite{J2}.  The \emph{mixing number} of $H$, denoted $k_1(H)$, is the least $K$ such that $G_k(H)$ is connected for all $k\geq K$.  In 2008, Cereceda, van den Heuvel, and Johnson \cite{CHJ} studied $k_1(H)$.  In particular, they showed that $k_1(H)\leq d+2$ if $H$ is \emph{$d$-degenerate}, meaning every subgraph of $H$ has a vertex of degree at most $d$.  Other papers on the connectedness of $G_k(H)$, or on finding paths between particular vertices of $G_k(H)$, include \cite{BC}, \cite{CHJ1}, and \cite{CHJ2}.

A \emph{Gray code} is an ordering of the elements of a given set such that consecutive elements differ in specified allowable small changes; a \emph{cyclic Gray code} is a Gray code where the elements are arranged in cyclic order.  Gray codes allow one to traverse an entire set of objects while doing little work changing between consecutive elements.  See \cite{S} for a survey on Gray codes.  A Gray code on the set of proper $k$-colorings of $H$ is an ordering of these colorings such that consecutive colorings differ on exactly one vertex.  There is a cyclic Gray code on the set of proper $k$-colorings of $H$ if and only if $G_k(H)$ is Hamiltonian. 

Cyclic Gray codes of proper colorings were first considered by Choo and MacGillivray \cite{CM} in 2011.  The \emph{Gray code number} of $H$, denoted $k_0(H)$, is the least $K$ such that $G_k(H)$ is Hamiltonian for all $k\geq K$.  Since every Hamiltonian graph is connected, we have $k_0(H)\geq k_1(H)$.  In \cite{CM} it was shown that $k_0(H)\leq d+3$ if $H$ is $d$-degenerate.

When $G_k(H)$ is not connected, but something similar to the mixing process is still desired, or when $G_k(H)$ is not Hamiltonian, but something similar to a cyclic Gray code of proper $k$-colorings of $H$ is desired, it is natural to ask by how much the adjacency conditions on $G_k(H)$ need to be relaxed.  We relax the requirement that consecutive colorings differ only on a single vertex, but we still want the differences between consecutive colorings to be localized.

\begin{definition}
For a graph $H$ and positive integer $k\geq\chi (H)$, let the \emph{$j$-localized $k$-coloring graph} of $H$, denoted $G^j_k(H)$, be the graph whose vertices are the proper $k$-colorings of $H$, with edges joining two colorings if $H$ contains a connected subgraph on at most $j$ vertices containing all vertices where the colorings differ (see Figure \ref{loccolgraphs}).  Let the \emph{$k$-color mixing number} of $H$, denoted $g_k(H)$, be the least $j$ such that $G^j_k(H)$ is connected, and let the \emph{$k$-color Gray code number} of $H$, denoted $h_k(H)$, be the least $j$ such that $G^j_k(H)$ is Hamiltonian.
\end{definition}

\begin{figure}[h]
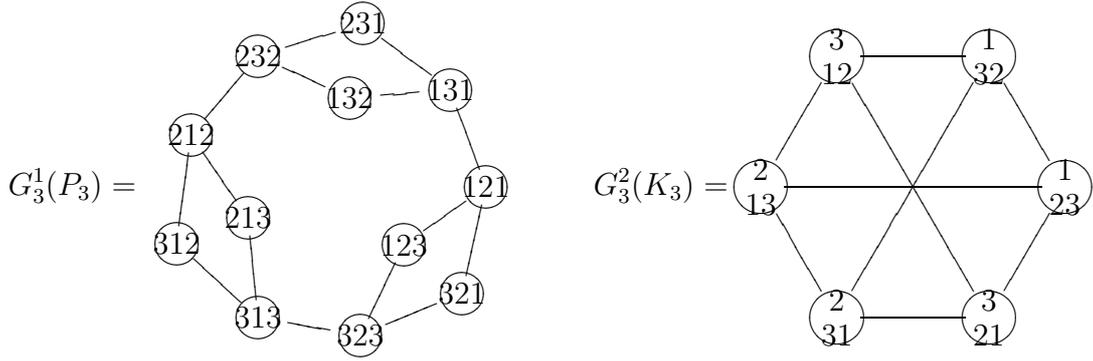

\centering
\[
G^1_3(P_3)= 
\xygraph{ !{<0cm,0cm>;<1cm,0cm>:<0cm,1cm>::}
!{(0,0);a(0)**{}?(2.0)}*+[o][F-:<3pt>]{121}="a"
!{(0,0);a(40)**{}?(2.0)}*+[o][F-:<3pt>]{131}="b"
!{(0,0);a(80)**{}?(1.2)}*+[o][F-:<3pt>]{132}="c"
!{(0,0);a(80)**{}?(2.2)}*+[o][F-:<3pt>]{231}="d"
!{(0,0);a(120)**{}?(2.0)}*+[o][F-:<3pt>]{232}="e"
!{(0,0);a(160)**{}?(2.0)}*+[o][F-:<3pt>]{212}="f"
!{(0,0);a(200)**{}?(1.2)}*+[o][F-:<3pt>]{213}="g"
!{(0,0);a(200)**{}?(2.2)}*+[o][F-:<3pt>]{312}="h"
!{(0,0);a(240)**{}?(2.0)}*+[o][F-:<3pt>]{313}="i"
!{(0,0);a(280)**{}?(2.0)}*+[o][F-:<3pt>]{323}="j"
!{(0,0);a(320)**{}?(1.2)}*+[o][F-:<3pt>]{123}="k"
!{(0,0);a(320)**{}?(2.2)}*+[o][F-:<3pt>]{321}="l"
"a"-"b"-"c"-"e"-"f"-"g"-"i"-"j"-"k"-"a"
"b"-"d"-"e"
"f"-"h"-"i"
"j"-"l"-"a"
}
\hspace{1cm}
G^2_3(K_3)= 
\xygraph{ !{<0cm,0cm>;<1cm,0cm>:<0cm,1cm>::}
!{(0,0);a(0)**{}?(2.0)}*+[o][F-:<3pt>]\txt{1\\ 23}="a"
!{(0,0);a(60)**{}?(2.0)}*+[o][F-:<3pt>]\txt{1\\ 32}="b"
!{(0,0);a(180)**{}?(2.0)}*+[o][F-:<3pt>]\txt{2\\ 13}="c"
!{(0,0);a(240)**{}?(2.0)}*+[o][F-:<3pt>]\txt{2\\ 31}="d"
!{(0,0);a(120)**{}?(2.0)}*+[o][F-:<3pt>]\txt{3\\ 12}="e"
!{(0,0);a(300)**{}?(2.0)}*+[o][F-:<3pt>]\txt{3\\ 21}="f"
"a"-"b"-"e"-"c"-"d"-"f"-"a"
"a"-"c"
"b"-"d"
"e"-"f"
}\]
\caption{Two examples of localized coloring graphs.}
\label{loccolgraphs}
\end{figure}

Since $G^1_k(H)=G_k(H)$, the statement ``$k_1(H)=K$'' is equivalent to ``$g_k(H)=1$ for $k\geq K$ but $g_{K-1}(H)>1$,'' and the statement ``$k_0(H)=K$'' is equivalent to ``$h_k(H)=1$ for $k\geq K$ but $h_{K-1}(H)>1$.''  Also note that if $j<\ell$, then $G^j_k(H)$ is a spanning subgraph of $G^{\ell}_k(H)$.  Clearly $g_k(H)\leq h_k(H)$, with $G^j_k(H)$ connected if and only if $j\geq g_k(H)$, and $G^j_k(H)$ Hamiltonian if and only if $j\geq h_k(H)$.

Rephrasing the previously stated degeneracy results, in \cite{CHJ} it is shown that $g_k(H)=1$ if $H$ is $(k-2)$-degenerate, and in \cite{CM} it is shown that $h_k(H)=1$ if $H$ is $(k-3)$-degenerate.  We first note that $g_k(H)$ and $h_k(H)$ exist whenever $k\geq\chi (H)$.  If $H$ is a connected $k$-colorable $n$-vertex graph, then $g_k(H)$ and $h_k(H)$ exist because $G^n_k(H)$ is a complete graph and thus Hamiltonian.  If $H$ consists of components $H_1,\ldots,H_m$, and $k\geq\chi (H)$, then clearly $G^j_k(H)=G^j_k(H_1)\square\cdots\square G^j_k(H_m)$.  The Cartesian product of graphs is connected if and only if each of the graphs is connected, and it is Hamiltonian if all are Hamiltonian, so $g_k(H)=\max_{i\in [m]}g_k(H_i)$ and $h_k(H)\leq \max_{i\in [m]}h_k(H_i)$ (see \cite{DPP} for details about the Hamiltonicity of Cartesian products).  

\begin{obs}
For every graph $H$ and integer $k\geq\chi (H)$, $g_k(H)$ and $h_k(H)$ exist.
\end{obs}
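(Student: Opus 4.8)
The plan is to reduce the statement to the case of a connected graph and then exhibit an explicit value of $j$ for which $G^j_k(H)$ is complete. First I would treat connected $H$. Suppose $H$ is connected with $n=|V(H)|$, and fix $k\geq\chi(H)$. The key observation is that taking $j=n$ makes any two proper $k$-colorings $\phi,\psi$ adjacent in $G^n_k(H)$: the set of vertices on which $\phi$ and $\psi$ differ is contained in $V(H)$, which itself induces a connected subgraph of $H$ on exactly $n$ vertices, so it is a witnessing subgraph of size at most $j$. Since $\phi$ and $\psi$ were arbitrary, $G^n_k(H)$ is the complete graph on the (finite, nonempty) set of proper $k$-colorings of $H$. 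As complete graphs are connected and Hamiltonian, both $g_k(H)$ and $h_k(H)$ exist with $g_k(H)\leq h_k(H)\leq n$.

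Next I would pass to the disconnected case using the product formula already recorded above, namely $G^j_k(H)=G^j_k(H_1)\square\cdots\square G^j_k(H_m)$ for the components $H_1,\ldots,H_m$ of $H$. Choosing $j=\max_{i\in[m]}|V(H_i)|$ makes every factor complete by the previous paragraph. For connectedness I would use that the Cartesian product of connected graphs is connected, which already shows $g_k(H)$ exists. For Hamiltonicity I would invoke the fact (see \cite{DPP}) that the Cartesian product of Hamiltonian graphs is Hamiltonian, together with the existence of each $h_k(H_i)$ from the connected case; this yields the bound $h_k(H)\leq\max_{i\in[m]}h_k(H_i)$ and in particular the existence of $h_k(H)$.

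The step requiring the most care is the Hamiltonicity of the product, which is the one genuine graph-theoretic input; the reduction itself is routine. The subtlety lives in the degenerate factors: if a component admits a single proper $k$-coloring then its factor is $K_1$, and if it admits exactly two then its factor is $K_2$, which is connected but not a single cycle. In these cases it is precisely the product structure that rescues Hamiltonicity (for instance $K_2\square K_2=C_4$ is Hamiltonian even though $K_2$ is not), so the inequality claimed for $h_k$ should be read with the appropriate convention for the Hamiltonicity of $K_1$ and $K_2$. I expect this edge-case bookkeeping, rather than the main argument, to be the only delicate point.
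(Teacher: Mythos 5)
Your proof is correct and follows essentially the same route as the paper: show $G^n_k(H)$ is complete (hence connected and Hamiltonian) for connected $H$, then handle disconnected $H$ via the Cartesian product decomposition over components. The paper glosses over the $K_1$/$K_2$ edge cases you flag (implicitly treating small complete graphs as Hamiltonian, as its later claim $h_2(H)=n$ for connected bipartite $H$ confirms), so your bookkeeping remark is a fair observation but not a divergence in method.
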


The inequality $h_k(H)\leq \max_{i\in [m]}h_k(H_i)$ is obviously an equality when $h_k(H_i)=1$ for each $i\in [m]$, but the inequality can also be strict: the Cartesian product of a Hamiltonian graph $G_1$ and a connected graph $G_2$ is Hamiltonian if the number of vertices of $G_1$ is at least the maximum degree of $G_2$, so if a graph $H$ has subgraphs $H'$ and $H''$ such that $H''=H-V(H')$ and there are no edges between $H'$ and $H''$ (that is, $H=H'+H''$), and the number of proper $k$-colorings of $H'$ is at least the maximum degree of $G^j_k(H'')$, where $j=g_k(H'')$, then $h_k(H)\leq\max\{h_k(H'),g_k(H'')\}$.  We construct such an $H$ by letting $H'$ be a set of at least two isolated vertices and letting $H''$ be the cycle $C_4$.  Note that $G^1_3(K_1)=K_3$, which is Hamiltonian, so $h_3(H')=1$ for all $n$.  Furthermore, there are $3^{|V(H')|}$ proper $3$-colorings of $H'$, and in \cite{CHJ} and \cite{CM} it is shown that $G^1_3(C_4)$ has maximum degree $4$ and is connected but not Hamiltonian, so $g_3(H'')=1<h_3(H'')$.  Thus $h_3(H)\leq\max\{h_3(H'),g_3(H'')\}=1<h_3(H'')$.

One would like to bound $g_k(H)$ and $h_k(H)$ in terms of $\chi (H)$ and $k$.  Such a statement is impossible, however: in Section \ref{sec:general} we generalize a construction from \cite{CHJ} to prove the following.

\begin{thm}\label{construction}
For $i$ and $k$ fixed with $1<i\leq k$, the functions $g_k$ and $h_k$ are unbounded on the set of $i$-chromatic graphs.  
\end{thm}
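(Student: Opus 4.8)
The plan is to reduce everything to disconnectedness. Since $g_k(H)\le h_k(H)$, it suffices to exhibit, for each positive integer $N$, a single graph $H$ with $\chi(H)=i$ for which $G^N_k(H)$ is disconnected, because then $g_k(H)>N$ and hence $h_k(H)\ge g_k(H)>N$. I would produce this disconnectedness in the strongest possible form, by building $H$ so that $G^N_k(H)$ has an isolated vertex while also having at least one other vertex. Concretely, I would find a proper $k$-coloring $\phi$ of $H$ that cannot be altered at all by recoloring any set of at most $N$ vertices, i.e.\ $\phi$ is \emph{$N$-frozen}; then $\phi$ is isolated in $G^N_k(H)$ regardless of the connectivity condition on the edges, and since $H$ will clearly admit other proper $k$-colorings (e.g.\ permute two colors of $\phi$), $G^N_k(H)$ is disconnected.

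The construction generalizes the frozen-coloring idea from \cite{CHJ}, replacing $K_{k,k}$ minus a perfect matching by a blown-up, $i$-partite analogue. Fix $n=N+1$ and let $H$ have vertex set $[i]\times[k]\times[n]$, declaring $(a,c,t)$ and $(b,c',t')$ adjacent exactly when $a\neq b$ and $c\neq c'$. Thus $H$ has $i$ \emph{super-parts} indexed by $a$, each split into $k$ \emph{color blocks} of size $n$ indexed by $c$, with no edges inside a super-part and no edges between equal-color blocks of different super-parts. The candidate coloring is $\phi(a,c,t)=c$, which is proper because adjacency forces $c\neq c'$.

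Two verifications remain. First, $\chi(H)=i$: coloring every vertex of super-part $a$ with color $a$ is a proper $i$-coloring, since distinct super-parts receive distinct colors and every cross edge then joins distinct colors, giving $\chi(H)\le i$; and choosing one vertex from each super-part in $i$ distinct color blocks yields a clique $K_i$ (possible as $k\ge i$), so $\chi(H)\ge i$. Second, $\phi$ is $N$-frozen: suppose $\psi$ is a proper $k$-coloring differing from $\phi$ on a nonempty set $D$ with $|D|\le N$, and take any $v=(a,c,t)\in D$. For each color $c'\neq c$, fix some $b\neq a$; the color block $\{(b,c',s):s\in[n]\}$ is entirely adjacent to $v$ and has $n=N+1>|D|$ vertices, so it contains a vertex outside $D$, which still carries color $c'$ under $\psi$. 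Hence $v$ sees every color other than $c$ among its neighbors, forcing $\psi(v)=c=\phi(v)$ and contradicting $v\in D$. So no such $\psi$ exists, $\phi$ is isolated, and $g_k(H)>N$.

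The only real obstacle is this final freezing argument, and in particular the role of the \emph{extra} colors when $i<k$: a single vertex can ordinarily escape to a color no neighbor uses, so freezing demands that every vertex sees all $k-1$ other colors and \emph{keeps} seeing them after any bounded recoloring. This is exactly what the blow-up buys: to deprive $v$ of a color one must recolor an entire color block, which has more than $N$ vertices, so no move available in $G^N_k(H)$ can unfreeze $\phi$. I would emphasize that the argument never uses the connectivity of the recolored set, only the bound $|D|\le N$, so it is insensitive to the precise localization condition defining $G^j_k(H)$; taking $n=N+1$ and letting $N\to\infty$ then shows $g_k$, and hence $h_k$, is unbounded on $i$-chromatic graphs.
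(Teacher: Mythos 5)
Your proof is correct and takes essentially the same route as the paper: both generalize the $L_m$ construction of \cite{CHJ} to an $i$-partite graph in which each part contains many vertices of every color, with edges joining differently colored vertices in different parts, so that the chosen coloring is frozen against any recoloring of at most $N$ vertices and is therefore isolated in $G^N_k(H)$. The differences are only in bookkeeping: you take color blocks of size $N+1$ so that every block retains an untouched vertex directly (the paper uses blocks of size $\lceil j/i\rceil$ and a pigeonhole argument over the parts), and your single construction also covers the case $i=k$, which the paper handles separately via a balanced complete $i$-partite graph.
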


The construction $L_m$ from \cite{CHJ} is a bipartite graph such that $g_k(L_m)=1$ if and only if $3\leq k\neq m$; hence increasing $k$ can increase $g_k(H)$, though the degeneracy bounds imply that $g_k(H)=h_k(H)=1$ for large enough $k$.  The author has yet to see an example where increasing $k$ increases $h_k(H)$, though the construction from Theorem \ref{construction} would seem to be a promising candidate for such an $H$.

\begin{ques}
Does there exist a graph $H$ and integer $k$ such that $h_k(H)<h_{k+1}(H)$?
\end{ques}

In Section \ref{sec:subgraphs} we provide upper bounds for $g_k(H)$ and $h_k(H)$ in terms of $g_k(H')$ and $h_k(H')$ for certain induced subgraphs $H'$ of $H$.  The statements of these results involve the notion of \emph{choosability}.  Given a graph $F$ and function $f:V(F)\rightarrow\mathbb{N}$, an \emph{$f$-list assignment} for $F$ is a function $L$ that gives each $v\in V(F)$ a list of $f(v)$ positive integers.  An \emph{$L$-coloring} of $F$ is a proper coloring $\phi$ of $F$ such that $\phi(v)\in L(v)$ for all $v\in V(F)$.  A graph $F$ is $f$-choosable if every $f$-list assignment $L$ admits an $L$-coloring.  Note that if $F$ if $f$-choosable, then there exists a proper coloring $\phi$ of $F$ such that $\phi(v)\leq f(v)$ for each $v\in V(F)$ (simply let $\phi$ be an $L$-coloring for the $f$-list assignment $L$ defined by $L(v)=[f(v)]$ for all $v\in V(F)$).

As an application of the theorems of Section \ref{sec:subgraphs}, we consider $g_k(H)$ and $h_k(H)$ for any tree or cycle $H$.  In \cite{CHJ} it is shown that $g_3(C_n)=1$ if and only if $n=4$ (so $h_3(C_n)\geq g_3(C_n)>1$ for $n\neq 4$), and in \cite{CM} it is proved that $h_3(C_4)>1$ but $h_k(C_n)=1$ for $k\geq 4$ and $n\geq 3$ (so $g_k(C_n)=1$ for $k\geq 4$ and $n\geq 3$).  In \cite{CM} it is also proved for $k\geq 3$ and any tree $T$ that $h_k(T)=1$ except in the case $k=3$ and $T=K_{1,2m}$ for some $m\geq 1$ (so $g_3(T)=1$ if $T\neq K_{1,2m}$, and $h_3(K_{1,2m})>1$).  Obviously any connected $n$-vertex bipartite graph $H$ has exactly two proper $2$-colorings, which differ in all $n$ vertices, so $g_2(H)=h_2(H)=n$.  Since trees and cycles of even length are connected bipartite graphs, and cycles of odd length are not $2$-colorable, the only remaining computations for trees and cycles are $g_3(K_{1,2m})$, $h_3(K_{1,2m})$, $g_3(C_n)$, and $h_3(C_n)$.  We compute these values by applying the theorems of Section \ref{sec:subgraphs} and using the fact that if $H=K_{1,2m}$ or $H=C_n$ for $n\neq 4$, then there exists $v\in V(H)$ such that $H-v$ is some tree $T$ satisfying $h_3(T)=1$.

\begin{prop}\label{treecyclecol}
For $n\geq 3$, $g_3(C_n)=h_3(C_n)=2$ (except $g_3(C_4)=1$), and for $m\geq 1$, $g_3(K_{1,2m})=1$ and $h_3(K_{1,2m})=2$.
\end{prop}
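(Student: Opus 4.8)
The plan is to pair the lower bounds already available in the literature with upper bounds obtained by deleting a single vertex and invoking the subgraph theorems of Section~\ref{sec:subgraphs}. For the lower bounds, \cite{CHJ} gives $g_3(C_n)>1$ for $n\neq 4$ and $g_3(C_4)=1$, so $h_3(C_n)\geq g_3(C_n)\geq 2$ for $n\neq 4$; and \cite{CM} gives $h_3(C_4)>1$ and $h_3(K_{1,2m})>1$, so $h_3(C_4)\geq 2$ and $h_3(K_{1,2m})\geq 2$. The value $g_3(K_{1,2m})=1$ does not follow from these, but it is immediate from degeneracy: every tree is $1$-degenerate, so the bound $g_k(H)=1$ for $(k-2)$-degenerate $H$ from \cite{CHJ} gives $g_3(K_{1,2m})=1$.

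For the upper bounds I would, in each remaining case, choose a vertex $v$ for which $T:=H-v$ is a tree with $h_3(T)=1$ and then apply the Section~\ref{sec:subgraphs} theorem to conclude $h_3(H)\leq 2$, whence $g_3(H)\leq h_3(H)\leq 2$ since $g_k\leq h_k$. For $H=C_n$ with $n\neq 4$, deleting any vertex leaves $T=P_{n-1}$; as $P_{n-1}$ equals a star $K_{1,2m}$ only when $n-1=3$, for $n\neq 4$ the path $P_{n-1}$ avoids the single exceptional tree and hence satisfies $h_3(P_{n-1})=1$ by \cite{CM}. For $H=K_{1,2m}$, deleting a leaf leaves $T=K_{1,2m-1}$, a star with an odd number of leaves, so again $h_3(T)=1$ by \cite{CM}. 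The heuristic behind the theorem is that a single-vertex recoloring move of $T$ either stays valid in $H$ or conflicts with the color of $v$; in the latter case the conflicting vertex is adjacent to $v$, so recoloring the two of them together is a move across a connected $2$-vertex subgraph, keeping us inside $G^2_3(H)$. Combined with the lower bounds, this yields $g_3(C_n)=h_3(C_n)=2$ for $n\neq 4$ and $h_3(K_{1,2m})=2$.

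The case $C_4$ must be handled separately, because $C_4-v=P_3=K_{1,2}$ is exactly the exceptional tree with $h_3(K_{1,2})>1$, so the deletion argument yields no bound. Here I would verify $h_3(C_4)\leq 2$ directly, exhibiting a cyclic Gray code on the $18$ proper $3$-colorings of $C_4$ in which consecutive colorings differ on a single vertex or on two adjacent vertices; together with $h_3(C_4)>1$ this gives $h_3(C_4)=2$, leaving $g_3(C_4)=1$ as the stated exception.

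The step I expect to be most delicate is the application of the subgraph theorem, specifically checking its choosability hypothesis: the fibers of the projection of $3$-colorings of $H$ onto those of $T$, namely the admissible colors for $v$, have \emph{variable} size---two when the two neighbors of $v$ agree and one when they disagree---so lifting a Hamiltonian cycle of $G^1_3(T)$ to one of $G^2_3(H)$ is not a mere product construction and depends on the theorem being arranged to absorb these size changes through the bundling moves above. The only part that resists this uniform treatment is $C_4$, which cannot be reduced to a good tree and so is left to an explicit construction.
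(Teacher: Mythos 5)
Your proposal matches the paper's proof essentially step for step: delete a single vertex so that the remainder is a tree with $h_3=1$, apply the single-vertex corollary of Section~\ref{sec:subgraphs} (Corollary~\ref{hvprop}) to get $h_3\leq 2$, combine with the lower bounds from \cite{CHJ} and \cite{CM}, and settle $C_4$ by exhibiting an explicit Hamiltonian cycle in $G^2_3(C_4)$ (the paper writes out the $18$-coloring cycle; you would still need to produce it, a routine finite check). The only cosmetic difference is that you obtain $g_3(K_{1,2m})=1$ from the degeneracy bound of \cite{CHJ}, whereas the paper derives it via Corollary~\ref{gvcor}; both are valid.
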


If $\chi(F)>k\geq 2$ but we only have $k$ colors available, subdividing each edge of $F$ will alter $F$ into a $k$-colorable graph $H$ while still preserving some structure of $F$.  In Section \ref{sec:sub}, we bound $g_k(H)$ and $h_k(H)$ for $k\geq 3$ and and any graph $H$ obtained from a multigraph $M$ by subdividing each edge of $M$ at least some prescribed number of times (some edges can be subdivided more than others).  If $H$ can be constructed by subdividing each edge of $M$ once or more, then $H$ is $2$-degenerate, so $g_k(H)=1$ for $k\geq 4$ and $h_k(H)=1$ for $k\geq 5$.  We prove the following results.

\begin{thm}\label{subdivided}
Suppose that $H$ is obtained from a multigraph $M$ by subdividing each edge of $M$ at least $\ell$ times.  If $\ell=2$ and $M$ is loopless, then $g_3(H)\leq 2$ and $h_4(H)=1$.  If $\ell=3$, then $h_3(H)\leq 2$.
\end{thm}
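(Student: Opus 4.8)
The plan is to reduce each part of Theorem~\ref{subdivided} to the corresponding statement for a forest, deleting exactly the subdivision vertices responsible for the cycles of $H$ and controlling the effect of reinstating them via the subgraph bounds of Section~\ref{sec:subgraphs}. First I would record that a subdivision of a multigraph in which every edge is subdivided at least once is $2$-degenerate: any subgraph either contains a degree-$2$ subdivision vertex, or consists only of branch vertices, which are pairwise non-adjacent. Hence the degeneracy results of \cite{CHJ} and \cite{CM} already give $g_k(H)=1$ for $k\ge 4$ and $h_k(H)=1$ for $k\ge 5$, so all the content lies in pushing $k$ down to $3$ (for $g_3$ and $h_3$) and to $4$ (for $h_4$), and in the localization thresholds there.

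For the reduction, fix a spanning forest $F$ of $M$ and let $S$ consist of one interior subdivision vertex from the path of $H$ corresponding to each edge of $M$ not in $F$. Since $\ell\ge 2$, distinct edges of $M$ give internally disjoint paths, so $S$ is an independent set of degree-$2$ vertices and $H':=H-S$ is the subdivision of $F$ together with the pendant paths left by the cut edges; in particular $H'$ is a forest. I would apply the subgraph theorems of Section~\ref{sec:subgraphs} to the pair $(H,H')$, whose choosability hypothesis must be checked: each deleted vertex has degree $2$, so I need a coloring of the retained graph in which the two neighbors of a deleted vertex may be restricted to lists of size $k-1\ge 2$. This is automatic, since the retained graphs are subdivisions, hence $2$-degenerate and thus list-colorable with the ample lists available here (and $H'$ itself is a forest, so it is $2$-choosable). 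The theorems then return bounds of the form $g_k(H)\le\max\{g_k(H'),c\}$ and $h_k(H)\le\max\{h_k(H'),c\}$, where $c$ is the localization overhead created by reinstating a degree-$2$ vertex.

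The value of $c$ is precisely what separates the three conclusions. When $k\ge 4$ a degree-$2$ vertex always has at least $k-2\ge 2$ available colors, so it can be recolored by itself and $c=1$; when $k=3$ its color may be forced by its two neighbors, and freeing it requires recoloring it together with one neighbor, so $c=2$. Combined with the forest base cases from \cite{CM} and Proposition~\ref{treecyclecol}, namely $g_3(T)=1$, $h_4(T)=1$, and $h_3(T)\le 2$ for every forest $T$, this yields $g_3(H)\le\max\{1,2\}=2$, $h_4(H)\le\max\{1,1\}=1$, and $h_3(H)\le\max\{2,2\}=2$, matching the three asserted bounds. The hypotheses enter through the quality of $H'$ and of the lift: with $M$ loopless and $\ell\ge 2$ every cycle of $H$ has length at least $6$, so no triangle arises from a doubly subdivided loop, which is what the Hamiltonicity lift for $h_4=1$ needs; and with $\ell\ge 3$ each cut path retains an interior vertex on both sides of the deleted vertex, giving each fiber enough room for the $k=3$ Hamiltonicity lift.

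The main obstacle I expect is the Hamiltonicity lift rather than the connectivity one. For $g_3\le 2$ it suffices to move colorings around, deleting and reinstating the coordinates of $S$ independently. For the Hamiltonicity statements one must thread a single Hamiltonian cycle of $G^j_k(H)$ through the fibers over a Hamiltonian cycle of the base forest $H'$, which is governed by the Cartesian-product criterion quoted in the introduction: a Hamiltonian graph times a connected graph is Hamiltonian once the first factor is at least as large as the maximum degree of the second. Verifying that the fibers, i.e.\ the available-color sets of the deleted vertices, are large and uniform enough to meet this criterion, and that the size-$2$ moves forced when $k=3$ can be synchronized with the base Gray code, is the delicate part; it is here that I expect the $\ell=3$ hypothesis (and, for $h_4$, looplessness) to be used in full strength.
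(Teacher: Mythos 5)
There is a genuine gap, and it starts with the decomposition. You delete a \emph{single} degree-$2$ subdivision vertex from each non-tree edge, so every reinstated vertex $v$ has both of its neighbors in the retained graph, i.e.\ $d'(v)=2$. With $k=3$ the choosability-based corollaries of Section~\ref{sec:subgraphs} then fail outright: $f(v)=3-2-\min\{2,j\}\leq -1$, so Corollaries~\ref{gchoosecor} and \ref{hchoosecor} cannot be invoked, and Corollary~\ref{gvcor} needs $k>2+\min\{2,g_3(H')\}=3$. The only applicable bounds are the additive ones (Corollaries~\ref{gsubprop} and \ref{hvprop}), which give $g_3\leq g_3(H')+1$ and $h_4\leq h_4(H')+1$ \emph{per reinstated vertex}; these accumulate to $g_3(H)\leq 1+|S|$ over many independent cycles and to $h_4(H)\leq 2$ rather than $h_4(H)=1$. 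So the asserted bounds of the form $\max\{g_k(H'),c\}$ are not what the Section~\ref{sec:subgraphs} machinery returns for your $H'$. The paper's decomposition is different in exactly the way needed: Observation~\ref{subobs} deletes $\ell$ \emph{consecutive} subdivision vertices from \emph{every} edge of $M$ and reinstates them one path at a time, so each reinstated piece is a path $v_1,\ldots,v_\ell$ whose endpoints each have a single neighbor ($x$, resp.\ $y$) in the retained graph, with $x$ and $y$ far apart when $M$ is loopless. That is what makes the $f^F$-choosability check in Proposition~\ref{g3subdivide} succeed ($f^F(u),f^F(v)\geq 1$ with one of them $\geq 2$, since a connected $F$ on at most $2$ vertices cannot meet both $N(x)$ and $N(y)$), and the paper even notes that looplessness is necessary for $g_3(H)\leq 2$, which your spanning-forest reduction does not see.

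The second gap is the Hamiltonicity lift. The graph $G^j_k(H)$ is \emph{not} a Cartesian product of $G^j_k(H')$ with a fiber: the set of extensions of a base coloring depends on that coloring (compare the fibers $F^1$ and $F^2$ in Figures~\ref{adjexthfourfig} and \ref{adjexththreefig}, which are non-isomorphic), and cross edges exist only between specific extension pairs, so the product Hamiltonicity criterion from the introduction does not apply. The actual content of the paper's proof is Lemmas~\ref{adjexth4} and \ref{adjexth3}, which catalogue, for each fiber, which entry vertices admit a Hamiltonian path of the fiber ending at a vertex compatible with the next fiber, together with the endgame case analysis in Propositions~\ref{h4subdivide} and \ref{h3subdivide} needed to close the path into a cycle (this requires choosing where on the base cycle $[\phi^1,\ldots,\phi^b]$ to place the seam, using a vertex $z$ away from $x$ and $y$). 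Your proposal defers all of this to a step you ``expect'' to be delicate; since that step is where the hypotheses $\ell=2$ versus $\ell=3$ and the values $h_4=1$ versus $h_3\leq 2$ are actually decided, the proof is not complete without it.
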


Since $g_3(C_n)=2$ for $n\geq 4$, $k=4$ is the least number of colors for which $g_k(H)=1$ holds in general for graphs $H$ obtained from multigraphs $M$ by subdividing each edge of $M$ at least $\ell$ times for any $\ell$.  We believe the statements made about $h_k(H)$ in Theorem \ref{subdivided} can be improved, however.

\begin{con}
If $H$ is obtained from a multigraph $M$ by subdividing each edge of $M$ at least once, then $h_3(H)\leq 2$ and $h_4(H)=1$.
\end{con}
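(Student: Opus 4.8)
The plan is to induct on the cycle rank (first Betti number) of $M$, taking subdivided forests as the base case, where the conclusion is already known. First I would reduce to connected $M$: if $M$ has components $M_1,\dots,M_m$, then the corresponding components $H_1,\dots,H_m$ of $H$ are subdivisions of the $M_i$, and since $h_k(H)\le\max_i h_k(H_i)$ it suffices to treat each $H_i$ separately. So assume $M$ is connected. If $M$ is a forest then $M$ is a tree and $H$ is a tree, so the tree results quoted just before Proposition \ref{treecyclecol} give $h_3(H)\le 2$ and $h_4(H)=1$ directly; the only tree attaining $h_3=2$ is $H=P_3$, which arises from a single edge subdivided once. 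This is the base case.

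For the inductive step, assume $M$ has positive cycle rank. Then $M$ has an edge $e=uz$ lying on a cycle, hence not a bridge, and the subdivided copy of $e$ in $H$ is a path $u-w_1-\cdots-w_t-z$ with $t\ge 1$. I would delete the subdivision vertex $w=w_1$. The graph $H'=H-w$ is again a subdivision in which every edge is subdivided at least once: if $t=1$ then $H'$ is the subdivision of $M-e$, while if $t\ge 2$ then $H'$ is the subdivision of $M-e$ with a pendant edge added at $z$ and subdivided $t-1\ge 1$ times. In either case the underlying multigraph has cycle rank one less than that of $M$, since $e$ is not a bridge, so by the inductive hypothesis $G^2_3(H')$ and $G^1_4(H')$ are both Hamiltonian. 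Loops and parallel edges cause no trouble: a subdivided loop or a parallel pair is just a cycle through its branch vertices, and deleting one of its subdivision vertices again lowers the cycle rank by one.

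It then remains, running the argument for $(k,j)=(4,1)$ and for $(k,j)=(3,2)$, to lift a Hamiltonian cycle $C$ of $G^j_k(H')$ to one of $G^j_k(H)$ by weaving in the colors of the degree-$2$ vertex $w$, whose neighbors are the branch vertex $u$ and either $w_2$ or, if $t=1$, the branch vertex $z$. Above each vertex $\sigma$ of $C$ sits the fiber $F_\sigma$ of colors available to $w$, which is a clique in $G^j_k(H)$ whose size is $k$ minus the number of distinct colors that $\sigma$ assigns to the two neighbors of $w$, hence $k-2$ or $k-1$, and which changes only along edges of $C$ that recolor a neighbor of $w$. One threads through the cyclic sequence of fibers, covering each $F_\sigma$ by a path between its entry and exit colors; for $k=4$ and $j=1$ the only moves recolor $w$ alone or recolor within $H'$, whereas for $k=3$ and $j=2$ one may also recolor $w$ simultaneously with an adjacent vertex. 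The structural gain over a general $2$-degenerate graph, where the degeneracy bounds require $k\ge 5$ for Hamiltonicity, is that one neighbor of $w$ is always a branch vertex and branch vertices are pairwise nonadjacent; this limits how wildly the fibers can vary along $C$, and is what should make $k=4$ (and, with the extra $2$-localized moves, $k=3$) enough.

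I expect the weaving to be the main obstacle, and this is surely why the conjecture remains open while Theorem \ref{subdivided} is proved only for $\ell\ge 2$ and $\ell\ge 3$: longer subdivided edges give larger, more flexible fibers, and the passage to $\ell\ge 1$ forces the tightest case. For $k=4$ the fibers can shrink to size $2$, so a run of such fibers threatens to force equal entry and exit colors; for $k=3$ the fiber collapses to a single color whenever $t=1$ and $\sigma(u)\ne\sigma(z)$, so transitions through it are possible only via the diagonal, size-$2$ moves recoloring $w$ together with $u$ or $z$. In both cases the difficulty is global rather than local: one must choose entry and exit colors consistently around all of $C$ so that every fiber is covered and the lifted cycle closes up. I anticipate that making the induction go through will require strengthening the hypothesis from ``$G^j_k(H')$ is Hamiltonian'' to a statement asserting a Hamiltonian path of $G^j_k(H')$ between any two colorings meeting a prescribed condition on $u$ and $z$, so that the fiber endpoints can be dictated at each step of the weave.
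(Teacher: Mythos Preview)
This statement is a \emph{conjecture} in the paper, not a theorem; the paper provides no proof, and you yourself correctly flag that it ``remains open.'' So there is no paper proof to compare against. What the paper does prove is the weaker Theorem~\ref{subdivided} ($\ell\ge 2$ for $h_4$, $\ell\ge 3$ for $h_3$), and your proposal is best read as a sketch of how one might hope to push that down to $\ell\ge 1$.

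Your framework differs from the paper's in one structural respect. The paper (Observation~\ref{subobs}) removes from each subdivided edge an entire path of $\ell$ consecutive degree-$2$ vertices, reducing $H$ to a forest in one stroke, and then reinstates those paths one at a time via the explicit case analyses of Propositions~\ref{h4subdivide} and~\ref{h3subdivide}. You instead delete a single subdivision vertex $w$ per step and induct on cycle rank. This is more incremental, but it meets the same wall: the fibre over a coloring $\sigma$ can have size as small as $k-2$ (or $k-1=1$ when $k=3$), and threading consistent entry and exit colors around the entire base cycle is precisely the unsolved ``weaving'' problem you isolate. Your diagnosis of why $\ell\ge 2,3$ succeed while $\ell\ge 1$ resists---longer subdivided segments give roomier fibres---matches what the paper's proofs actually exploit. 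Your suggestion that a strengthened inductive hypothesis (a Hamiltonian path with prescribed endpoint colorings) may be needed is in the spirit of Lemmas~\ref{adjexth4} and~\ref{adjexth3}, which do exactly that for the larger-$\ell$ cases.

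There is also a bookkeeping slip in your inductive step. When $t\ge 2$, deleting $w=w_1$ leaves the pendant path $z,w_t,\ldots,w_2$ in $H'$; viewed as a pendant edge of the new underlying multigraph, it is subdivided $t-2$ times, not $t-1$ as you write. In particular for $t=2$ the pendant edge $zw_2$ is unsubdivided, so $H'$ is no longer in the class covered by your inductive hypothesis. This is patchable---e.g.\ by removing a longer segment when $t\ge 2$, which in fact pushes you toward the paper's decomposition---but it shows that the single-vertex deletion does not preserve the ``every edge subdivided at least once'' property in general.

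In sum: you have correctly located the obstruction and have not overclaimed; the proposal is an honest outline, not a proof, and the statement is genuinely open.
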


Many of the proofs in Sections \ref{sec:subgraphs} and \ref{sec:sub} follow the same pattern.  We are given a subgraph $H'$ of a graph $H$ such that every $k$-coloring of $H'$ can be extended to a $k$-coloring of $H$.  To compute an upper bound on $g_k(H)$ or $h_k(H)$ based on $g_k(H')$ or $h_k(H')$, we start with a path or Hamiltonian cycle in $G^j_k(H')$, and alter it into a path or Hamiltonian cycle in $G^{j'}_k(H)$ for some $j'$ not much larger than $j$.  In creating a Hamiltonian cycle in $G^{j'}_k(H)$, we list consecutively the extensions of each proper $k$-coloring of $H'$.  The surprisingly tricky aspect of such proofs is showing that we can close a Hamiltonian path through $G^{j'}_k(H)$ into a Hamiltonian cycle.

In \cite{CM} it is shown that $G^1_k(K_n)$ is edgeless if $k=n$ and Hamiltonian if $k>n$, so $h_n(K_n)\geq g_n(K_n)>1$ and $g_k(K_n)=h_k(K_n)=1$ for $k>n>1$.  Computing $g_n(K_n)$ and $h_n(K_n)$ is a matter of viewing proper $n$-colorings of $K_n$ as permutations on $[n]$ and listing them in cyclic order so that consecutive permutations differ only by transpositions (the oldest and most famous method for creating such a listing is the Steinhaus-Johnson-Trotter algorithm \cite{J}).  Hence $g_n(K_n)=h_n(K_n)=2$ for $n>1$.  In Section \ref{sec:multi} we use these results, plus one from Kompel'makher and Liskovets \cite{KL} on bases of transpositions, in generalizing from complete graphs to complete multipartite graphs.
\begin{thm}\label{multi}
If $H=K_{m_1,\ldots ,m_k}$, where $m_1\leq\cdots\leq m_k$, then the following hold: 
\begin{itemize}
\item
$g_k(H)=h_k(H)=m_1+m_k$
\item
$g_{\ell}(H)=1$ for $\ell>k$
\item
$h_{k+1}(H)=1$ if each $m_i$ is odd
\item
$h_{k+1}(H)=2$ if some $m_i$ is even
\end{itemize}
\end{thm}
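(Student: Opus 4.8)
The plan is to begin by recording the structure of proper colorings of $H=K_{m_1,\dots,m_k}$. Since any two vertices in different parts are adjacent, the color sets used on distinct parts are pairwise disjoint, and with exactly $k$ colors this forces every part to be monochromatic; hence the proper $k$-colorings correspond to the bijections assigning the $k$ colors to the $k$ parts, i.e. to permutations in $S_k$. Two such colorings are adjacent in $G^j_k(H)$ precisely when the set $T$ of parts on which they disagree satisfies $\sum_{i\in T}m_i\le j$, because the disputed vertices $\bigcup_{i\in T}P_i$ induce a connected complete multipartite graph whenever $|T|\ge 2$. For the first bullet (taking $k\ge 2$) I would prove the lower bound by noting that any edge recoloring the largest part $P_k$ must also recolor another part, so it has cost at least $m_k+m_1$; thus for $j<m_1+m_k$ the color of $P_k$ is frozen and $G^j_k(H)$ is disconnected, giving $g_k(H)\ge m_1+m_k$. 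For the matching upper bound, observe that at $j=m_1+m_k$ every star transposition $(P_1\,P_l)$ is available, since its cost $m_1+m_l$ is at most $m_1+m_k$; these transpositions have a connected (star) transposition graph, so by Kompel'makher--Liskovets \cite{KL} the associated Cayley graph of $S_k$ is Hamiltonian, and that cycle is a Hamiltonian cycle of $G^{m_1+m_k}_k(H)$. Combining $m_1+m_k\le g_k(H)\le h_k(H)\le m_1+m_k$ settles the first bullet.

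For $g_\ell(H)=1$ when $\ell>k$, I would show directly that $G^1_\ell(H)$ is connected by producing a single-vertex recoloring sequence from any coloring to the canonical coloring sending part $i$ to color $i$. First recolor each part to be monochromatic, which is legal because a color already used on a part is forbidden on no other part; this reaches a monochromatic coloring using $k$ of the $\ell$ colors. Since $\ell-k\ge 1$ colors are now free, I can slide parts one at a time through a free color to sort the assignment into the canonical one, exactly as tokens are rearranged on $K_\ell$ with at least one empty slot.

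The last two bullets concern $\ell=k+1$, where a proper coloring is either monochromatic on every part (one color unused) or has a single part using two colors (all colors used). Fixing the colors of all other parts and letting one part $P_i$ range over its two available colors produces a hypercube $Q_{m_i}$ whose two antipodes are monochromatic and whose interior colorings are trapped, in that their only single-vertex neighbors again lie in that same $Q_{m_i}$. This yields the parity dichotomy for the lower bound: any Hamiltonian cycle of $G^1_{k+1}(H)$ must traverse each such block along a Hamiltonian path between its two antipodes, and $Q_{m_i}$ admits such a path if and only if $m_i$ is odd, because for even $m_i$ the antipodes lie in the same class of the bipartition of $Q_{m_i}$. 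Hence if some $m_i$ is even then $G^1_{k+1}(H)$ is not Hamiltonian and $h_{k+1}(H)\ge 2$. For the upper bounds I would run the extension method used throughout the paper: take a Hamiltonian cycle on the monochromatic cores (these are the proper $(k+1)$-colorings of the skeleton $K_k$, available since $h_{k+1}(K_k)=1$ via the Steinhaus--Johnson--Trotter / $K_n$ results together with \cite{KL}), and splice into each core a Gray-code traversal of the colorings extending it, with the aim of closing everything into one cycle in $G^1_{k+1}(H)$ when all $m_i$ are odd, and into $G^2_{k+1}(H)$ using a single $2$-vertex move to bridge each even block when some $m_i$ is even.

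The hard part, exactly as the paper flags, is the closure, and here it is sharpened by the trapping phenomenon. Each core is an antipode of one block per part of size at least two, every such block must be traversed by a full antipode-to-antipode Hamiltonian path, and each traversal consumes a cycle-edge at the core; reconciling ``visit each core once'' (degree two) with ``cover every extension-block'' is the genuine crux, and it is precisely where the parity of the $m_i$ and the admissible budget $j$ must be played off against one another. I expect the decisive step to be organizing the per-core Gray codes and the transitions between consecutive cores so that the concatenation is a single cycle rather than a disjoint union of cycles while still covering every trapped interior coloring, and then checking that in the even case exactly one $2$-vertex move per defective block suffices to repair the parity gap, pinning $h_{k+1}(H)$ to $2$ rather than anything larger.
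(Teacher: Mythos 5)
Your handling of the first two bullets and of both lower bounds is correct and is essentially the paper's own argument: the identification of proper $k$-colorings with permutations of the parts, the frozen-largest-part argument for $g_k(H)\ge m_1+m_k$, the star basis of transpositions together with Kompel'makher--Liskovets for $h_k(H)\le m_1+m_k$, the two-stage reconnection for $g_\ell(H)=1$ when $\ell>k$, and the trapped-hypercube argument showing that any Hamiltonian cycle of $G^1_{k+1}(H)$ must cross each two-colored block by an antipode-to-antipode Hamiltonian path of $Q_{m_i}$ (whence $h_{k+1}(H)\ge 2$ when some $m_i$ is even).

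The genuine gap is the pair of upper bounds for $h_{k+1}(H)$, which you explicitly defer as ``the crux'' and never carry out, and this is not a gap that more care will close. First, a two-colored block is indexed by an \emph{edge} of $G^1_{k+1}(K_k)$ (the pair of monochromatic colorings obtained by collapsing the bichromatic part to either of its two colors); splicing blocks into a Hamiltonian cycle of the $n=(k+1)!$ cores only reaches the blocks attached to the $n$ edges of that cycle, while $G^1_{k+1}(K_k)$ has $kn/2$ edges, so for $k\ge 3$ most blocks are never entered at all. Second, your own bookkeeping already shows the target is unattainable in general: every core is an antipode of one forced-traversal block for each part of size at least two, each such traversal consumes a distinct cycle-edge at that core, and a core has only two cycle-edges. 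So if three or more parts have at least two vertices --- e.g.\ $H=K_{3,3,3}$ --- then $G^1_{k+1}(H)$ is not Hamiltonian regardless of the parities of the $m_i$, and the third bullet cannot be proved as you (or anyone) would state it. You should also be aware that the paper's own proof of its Claims 1 and 2 suffers from exactly this undercount: its assertion that every $(k+1)$-coloring using all $k+1$ colors arises from an edge of the chosen Hamiltonian cycle of $G^1_{k+1}(K_k)$ fails once $k\ge 3$, so the spliced walk it builds misses the blocks attached to unused edges. The construction you are hoping to fill in is therefore not simply waiting in the paper to be copied; the last two bullets are only established there in the bipartite case $k=2$ (or when at most one part is non-singleton), and the general statement needs to be reexamined.
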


We close this section by asking what relationships between $j$ and $k$ can guarantee the connectedness or Hamiltonicity of $G^j_k(H)$.  We have observed for a $d$-degenerate graph $H$ that $k\geq d+2$ implies $g_k(H)=1$ and $k\geq d+3$ implies $h_k(H)=1$, but the hypotheses of these statements are independent of $j$.  It would be interesting to see what functions $X(j,k)$ and $Y(H)$ nontrivially yield that $X(j,k)\geq Y(H)$ implies $g_k(H)\leq j$ or that $X(j,k)\geq Y(H)$ implies $h_k(H)\leq j$, potentially under restrictions of $j$, $k$, and $H$.  For example, we know $j+k\geq 4$ implies $h_k(C_n)\leq j$ for $j\geq 1$ and $k\geq 3$.  Continuing along these lines, we ask the following.

\begin{ques}
Are there constants $c$ and $c'$ such that if $H$ is $d$-degenerate, then $g_k(H)\leq j$ when $j\geq d-k-c$ and $h_k(H)\leq j$ when $j\geq d-k-c'$?
\end{ques}

\section{Unboundedness of $g_k$ and $h_k$ on Graphs with Fixed Chromatic Number}
\label{sec:general}

In this section we prove Theorem \ref{construction}, which states that for $i$ and $k$ fixed with $2\leq i\leq k$, the functions $g_k$ and $h_k$ are unbounded on the set of $i$-chromatic graphs.  It suffices to show that for $2\leq i\leq k$ and $j\geq 1$, there exists an $i$-chromatic graph $L(i,j,k)$ having a proper $k$-coloring $\phi$ isolated in $G^{j-1}_k(L(i,j,k))$.  

If $i=k$, then we can set $L(i,j,k)$ as the balanced complete $i$-partite graph with part size $\left\lceil j/2\right\rceil$.  Clearly $\chi (L(i,j,k))=i$, since $L(i,j,k)$ is an $i$-partite graph containing an $i$-clique.  Furthermore, we can let $\phi$ be any proper $k$-coloring of $L(i,j,k)$ since $G^{j-1}_k(L(i,j,k))$ is edgeless: any proper $k$-coloring of $L(i,j,k)$ assigns the colors of $[k]$ in a one-to-one fashion to the partite sets, each of which has at least $j/2$ vertices, so any distinct proper $k$-colorings of $L(i,j,k)$ differ on at least $j$ vertices.

For $i<k$, let $L(i,j,k)$ have $i$ partite sets each of size $k\left\lceil j/i\right\rceil$, with $\phi$ assigning each color in $[k]$ to exactly $\left\lceil j/i\right\rceil$ vertices in each partite set.  Give $L(i,j,k)$ precisely those edges that join differently colored vertices in different partite sets, so $\phi$ is a proper $k$-coloring of $L(i,j,k)$, and $\chi (L(i,j,k))=i$ since $L(i,j,k)$ is an $i$-partite graph containing an $i$-clique.  See Figure \ref{L(2,3,3)} for an illustration of $L(2,3,3)$ colored by $\phi$.  If $S$ is a set of $j-1$ vertices to be recolored, then $|X\cap S|<j/i$ for some partite set $X$.  For each $\ell\in[k]$ there is $x_{\ell}\in X-S$ such that $\phi(x_{\ell})=\ell$.  Thus recoloring any $y\in S-X$ with the color $\ell$ creates a monochromatic edge $x_{\ell}y$ in $L(i,j,k)$, so $\phi$ is isolated in $G^{j-1}_k(L(i,j,k))$.  

\begin{figure}[h]
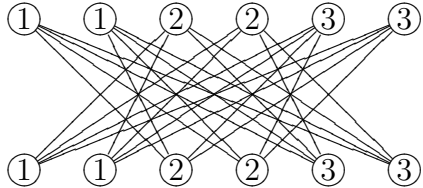

\centering
\[
\xygraph{ !{<0cm,0cm>;<.5cm,0cm>:<0cm,1cm>::}
!{(-5,1) }*+[o][F-:<3pt>]{1}="a1"
!{(-3,1) }*+[o][F-:<3pt>]{1}="a2"
!{(-1,1) }*+[o][F-:<3pt>]{2}="a3"
!{(1,1) }*+[o][F-:<3pt>]{2}="a4"
!{(3,1) }*+[o][F-:<3pt>]{3}="a5"
!{(5,1) }*+[o][F-:<3pt>]{3}="a6"
!{(-5,-1) }*+[o][F-:<3pt>]{1}="b1"
!{(-3,-1) }*+[o][F-:<3pt>]{1}="b2"
!{(-1,-1) }*+[o][F-:<3pt>]{2}="b3"
!{(1,-1) }*+[o][F-:<3pt>]{2}="b4"
!{(3,-1) }*+[o][F-:<3pt>]{3}="b5"
!{(5,-1) }*+[o][F-:<3pt>]{3}="b6"
"a1"-"b3" "a1"-"b4" "a1"-"b5" "a1"-"b6" 
"a2"-"b3" "a2"-"b4" "a2"-"b5" "a2"-"b6"
"a3"-"b1" "a3"-"b2" "a3"-"b5" "a3"-"b6"
"a4"-"b1" "a4"-"b2" "a4"-"b5" "a4"-"b6"
"a5"-"b1" "a5"-"b2" "a5"-"b3" "a5"-"b4"
"a6"-"b1" "a6"-"b2" "a6"-"b3" "a6"-"b4"
}\]
\caption{An illustration of $L(2,3,3)$ and its proper $3$-coloring $\phi$.}
\label{L(2,3,3)}
\end{figure}

The graph $L_m$ is defined in \cite{CHJ} as $K_{m,m}$ minus a perfect matching, and there it is shown for $k,m\geq 3$ that $G^1_k(L_m)$ is disconnected if and only if $k=m$.  We note that our $L(2,1,m)$ is identical to their $L_m$.

\section{Subgraphs}
\label{sec:subgraphs}

For this section, fix positive integers $j$ and $k$, a graph $H$, and disjoint subgraphs $H'$ and $H''$ of $H$ such that $\chi(H')\leq k$, $H''$ is connected and has at most $j$ vertices, and $H'=H-V(H'')$.  We investigate what can be said about $g_k(H)$ and $h_k(H)$ based on $g_k(H')$, $h_k(H')$, and $H''$.  Before continuing, we introduce some definitions to be used throughout the section.
\begin{definition}
Let $F$ be a subgraph of $H'$, and let $v\in V(H'')$.  Let $d^F(v)=|N_H(v)\cap V(F)|$; for convenience, set $d'(v)=d^{H'}(v)$.  Define $f^F(v)=k-d'(v)-d^F(v)$ and $f(v)=k-d'(v)-\min\{d'(v),j\}$.  For $u\in V(H'')$, define $f^F_u(v)=f^F(v)-\delta_{u,v}$ and $f_u(v)=f(v)-\delta_{u,v}$, where $\delta_{u,v}=1$ if $u=v$ and $\delta_{u,v}=0$ if $u\neq v$. 
\end{definition}
We start with the parameter $g_k(H)$, recalling the definition of choosability from Section \ref{sec:grayintro}.
\begin{prop}\label{gsubpropgen}
If $H''$ is $(k-d'(v))$-choosable, then $g_k(H)\leq g_k(H')+j$.
\end{prop}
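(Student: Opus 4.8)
Write $g=g_k(H')$; the plan is to prove that $G^{g+j}_k(H)$ is connected, which yields $g_k(H)\le g+j$. I would assemble the argument from three ingredients and then lift a path in the connected graph $G^g_k(H')$ to a walk in $G^{g+j}_k(H)$.

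First I would record that every proper $k$-coloring $\phi$ of $H'$ extends to a proper $k$-coloring of $H$. For each $v\in V(H'')$, let $L(v)=[k]\setminus\{\phi(u):u\in N_H(v)\cap V(H')\}$, so that $|L(v)|\ge k-d'(v)$. Since $H''$ is $(k-d'(v))$-choosable, there is an $L$-coloring $\tau$ of $H''$, and $\phi$ together with $\tau$ is a proper $k$-coloring of $H$, because $\tau$ is proper on $H''$ and every vertex of $H''$ avoids the colors of its neighbors in $H'$. Second, the extensions of a fixed $\phi$ form a clique in $G^j_k(H)$, and hence in $G^{g+j}_k(H)$: any two such extensions agree on $H'$ and differ only inside $V(H'')$, which is connected and has at most $j$ vertices. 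By these two facts, it suffices to connect, for each edge of $G^g_k(H')$, a single extension of one endpoint to a single extension of the other.

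The third ingredient treats one edge $\phi_a\phi_b$ of $G^g_k(H')$; the two colorings differ on a set $D'$ contained in a connected subgraph $C'$ of $H'$ with $|C'|\le g$. I would split on whether $H''$ has a neighbor in $C'$. If no vertex of $H''$ is adjacent to $C'$ (in particular none is adjacent to $D'$), then the extension constraints coming from $H'$ are identical for $\phi_a$ and $\phi_b$, so a common extension $\tau$ exists; the colorings obtained by pairing $\phi_a$ and $\phi_b$ with $\tau$ differ only on $D'\subseteq C'$ and are therefore adjacent in $G^g_k(H)\subseteq G^{g+j}_k(H)$. Otherwise some edge joins $C'$ and $V(H'')$, so $C'\cup V(H'')$ is connected (each part is connected and an edge joins them) with at most $g+j$ vertices; choosing any extension $\psi_a$ of $\phi_a$ and any extension $\psi_b$ of $\phi_b$, their difference set lies in $C'\cup V(H'')$, so $\psi_a$ and $\psi_b$ are adjacent in $G^{g+j}_k(H)$.

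Assembling these, a path in $G^g_k(H')$ from one coloring of $H'$ to another lifts to a walk in $G^{g+j}_k(H)$ joining extensions of its endpoints, and the clique structure lets me reach any prescribed extensions at the two ends; since every proper $k$-coloring of $H$ restricts to one of $H'$, this shows $G^{g+j}_k(H)$ is connected. The genuinely delicate point, which I expect to be the main obstacle, is the second case: when passing from $\phi_a$ to $\phi_b$ forces a recoloring of $H''$, I must guarantee that the entire set of changed vertices still lies in one connected subgraph of $H$ on at most $g+j$ vertices. This is exactly why I would recolor all of $V(H'')$ at once rather than patch locally — keeping the changed set inside the connected set $C'\cup V(H'')$ — since a partial recoloring could leave altered vertices of $H''$ disconnected from $C'$ and thereby violate the connectivity requirement of the definition.
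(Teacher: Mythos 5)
Your proof is correct and follows essentially the same route as the paper's: extend colorings of $H'$ via the $(k-d'(v))$-choosability of $H''$, note that the extensions of a fixed coloring form a clique in $G^{g+j}_k(H)$, and split on whether an edge joins $H''$ to the connected subgraph containing the difference set, using a common extension in one case and arbitrary extensions (whose difference set lies in the connected set $V(F)\cup V(H'')$ of at most $g+j$ vertices) in the other. No gaps.
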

\begin{proof}
Set $\ell=g_k(H')$.  Let $\phi$ and $\pi$ be any proper $k$-colorings of $H$, and let $\phi'$ and $\pi'$ be the proper $k$-colorings of $H'$ obtained, respectively, by restricting $\phi$ and $\pi$ to $H'$.  There exists a $(\phi',\pi')$-path in $G^{\ell}_k(H')$, which we alter into a $(\phi,\pi)$-path in $G^{\ell+j}_k(H)$ to complete the proof.  In $G^{\ell+j}_k(H)$, $\phi$ is adjacent to any other extension of $\phi'$ and $\pi$ is adjacent to any other extension of $\pi'$, so we need only show that any adjacent colorings $\alpha'$ and $\beta'$ in $G^{\ell}_k(H')$ have extensions $\alpha$ and $\beta$ that are adjacent in $G^{\ell+j}_k(H)$.  For $\gamma'\in\{\alpha',\beta'\}$, $\gamma'$ can be extended to a proper $k$-coloring of $H$ by coloring $H''$ from the list assignment $L$ defined by $L(v)=[k]-\{\gamma'(u):u\in V(H'),uv\in E(H)\}$, since $H''$ is $(k-d'(v))$-choosable.  

Let $F$ be a connected subgraph of $H'$ on at most $\ell$ vertices that includes everywhere $\alpha'$ and $\beta'$ differ.  If $H$ contains no edge joining $H''$ and $F$, then any coloring of $H''$ that extends $\alpha'$ to a proper $k$-coloring $\alpha$ of $H$ also extends $\beta'$ to a proper $k$-coloring $\beta$ of $H$, and $\alpha$ is adjacent to $\beta$ in $G^{\ell+j}_k(H)$ since they still only differ on $F$.  If some edge in $H$ joins $H''$ and $F$, then any extension $\alpha$ of $\alpha'$ is adjacent in $G^{\ell+j}_k(H)$ to any extension $\beta$ of $\beta'$, since they differ only on the subgraph of $H$ induced by $V(F)\cup V(H'')$, which is connected and has at most $\ell+j$ vertices.
\end{proof}
\begin{cor}\label{gsubprop}
If $H''$ consists of a single vertex $v$ having degree less than $k$ in $H$, then $g_k(H)\leq g_k(H')+1$.
\end{cor}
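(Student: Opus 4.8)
The plan is to obtain this as an immediate specialization of Proposition \ref{gsubpropgen}. First I would take $j=1$, which is legitimate because $H''$ consists of the single vertex $v$ and therefore has at most $1$ vertex. With $j=1$ the conclusion of Proposition \ref{gsubpropgen} reads exactly $g_k(H)\leq g_k(H')+1$, so the entire task reduces to verifying that the hypothesis of that proposition holds here, namely that $H''$ is $(k-d'(v))$-choosable.

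To check the choosability hypothesis, I would observe that $H''$ is a one-vertex graph (hence edgeless), and that the relevant list-size function assigns the value $k-d'(v)$ to $v$. A single vertex is trivially $s$-choosable for every $s\geq 1$: any list assignment giving $v$ a nonempty list admits a coloring, since one may simply pick any color from that list. Thus it suffices to confirm that $k-d'(v)\geq 1$. For this, I would note that because $H''=\{v\}$ contains no edges, every edge of $H$ incident to $v$ joins $v$ to a vertex of $H'=H-v$; consequently $d'(v)=|N_H(v)\cap V(H')|$ equals the full degree of $v$ in $H$, which is less than $k$ by hypothesis. Hence $k-d'(v)\geq 1$, the choosability condition is met, and Proposition \ref{gsubpropgen} gives the claim.

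Honestly, there is no substantive obstacle in this argument: the corollary is a direct consequence of the more general proposition once $j$ is fixed at $1$. The only point that warrants care is the bookkeeping step identifying $d'(v)$ with the degree of $v$ in all of $H$, which is valid precisely because $H''$ has no internal edges; this is what lets the stated degree hypothesis be rephrased as the $(k-d'(v))$-choosability hypothesis required by Proposition \ref{gsubpropgen}.
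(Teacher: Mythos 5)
Your proposal is correct and is essentially identical to the paper's own proof, which likewise sets $j=1$ in Proposition \ref{gsubpropgen} and observes that $k-d'(v)\geq 1$ makes the single vertex $(k-d'(v))$-choosable. The extra care you take in identifying $d'(v)$ with $d_H(v)$ is a detail the paper leaves implicit but is the same argument.
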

\begin{proof}
We have $k-d'(v)\geq 1$, so $H''$ is $(k-d'(v))$-choosable, so the result follows by setting $j=1$ in Proposition \ref{gsubpropgen}.
\end{proof}
Note that the hypothesis $k>d_H(v)$ is necessary in Corollary \ref{gsubprop}, since if $H'=K_k$ and $H=K_{k+1}$, then $H'$ is $k$-colorable but $H$ is not.
\begin{prop}\label{gchooseprop}
If $g_k(H')\leq j$ and $H''$ is $f^F$-choosable for each connected subgraph $F$ of $H'$ on at most $g_k(H')$ vertices, then $g_k(H)\leq j$.
\end{prop}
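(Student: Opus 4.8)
The plan is to reuse the path-lifting idea behind Proposition \ref{gsubpropgen}, but to organize the moves so that no single step recolors a piece of $H'$ and a piece of $H''$ at the same time; this keeps every difference set inside a connected subgraph on at most $j$ vertices. Fix arbitrary proper $k$-colorings $\phi$ and $\pi$ of $H$, and let $\phi',\pi'$ be their restrictions to $H'$. Since $g_k(H')\leq j$, the graph $G^{g_k(H')}_k(H')$ is connected, so I would first choose a path $\phi'=\psi'_0,\psi'_1,\ldots,\psi'_t=\pi'$ in it, where consecutive colorings $\psi'_i$ and $\psi'_{i+1}$ differ only on a connected subgraph $F_i$ of $H'$ satisfying $|V(F_i)|\leq g_k(H')$.

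The crux is to find, for each $i$, a single proper $k$-coloring $c_i$ of $H''$ such that both $\psi'_i\cup c_i$ and $\psi'_{i+1}\cup c_i$ are proper $k$-colorings of $H$; that is, $c_i$ must be compatible with $\psi'_i$ and with $\psi'_{i+1}$ simultaneously. A vertex $v\in V(H'')$ is forbidden the colors $\psi'_i(u)$ and $\psi'_{i+1}(u)$ for $u\in N_H(v)\cap V(H')$. Because $\psi'_i$ and $\psi'_{i+1}$ agree off $V(F_i)$, at most $d'(v)+d^{F_i}(v)$ colors are forbidden to $v$, so the list of permitted colors has size at least $f^{F_i}(v)=k-d'(v)-d^{F_i}(v)$. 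Shrinking each such list to size exactly $f^{F_i}(v)$ and applying the hypothesis that $H''$ is $f^{F_i}$-choosable then produces the required $c_i$ (the hypothesis also forces $f^{F_i}(v)\geq 1$, so these lists are nonempty).

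With the $c_i$ chosen, I would form the $(\phi,\pi)$-walk in $G^j_k(H)$ given by
\[
\phi,\;\psi'_0\cup c_0,\;\psi'_1\cup c_0,\;\psi'_1\cup c_1,\;\psi'_2\cup c_1,\;\ldots,\;\psi'_t\cup c_{t-1},\;\pi.
\]
Each step is one of two kinds. A step $\psi'_i\cup c_i\to\psi'_{i+1}\cup c_i$ alters only $F_i$, a connected set of at most $g_k(H')\leq j$ vertices. Every other step, including the two involving $\phi$ and $\pi$, fixes the coloring of $H'$ and alters only vertices of $H''$, so its difference set lies in the connected subgraph $H''$ of order at most $j$. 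Hence each consecutive pair is adjacent in $G^j_k(H)$, so $\phi$ and $\pi$ lie in the same component; as they were arbitrary, $G^j_k(H)$ is connected and $g_k(H)\leq j$.

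The main obstacle is exactly the middle step: securing one $H''$-coloring valid on both sides of each $H'$-move. This is why $f^{F}$ rather than $k-d'(v)$ governs the choosability hypothesis — demanding simultaneous compatibility with $\psi'_i$ and $\psi'_{i+1}$ costs up to $d^{F_i}(v)$ additional forbidden colors at $v$, and $f^{F}$-choosability is precisely the condition that survives that loss while still guaranteeing a proper coloring of $H''$.
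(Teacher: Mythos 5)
Your proposal is correct and follows essentially the same route as the paper: the key step in both is to use $f^{F}$-choosability to produce a single proper coloring of $H''$ compatible with both endpoints of each $H'$-move (the paper phrases the bookkeeping via cliques of extensions rather than an explicit alternating walk, and your choice to take the path in $G^{g_k(H')}_k(H')$ so that each $F_i$ has at most $g_k(H')$ vertices matches the stated hypothesis precisely). No gaps.
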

\begin{proof}
Let $\phi$ and $\pi$ be any proper $k$-colorings of $H$, and let $\phi'$ and $\pi'$ be the proper $k$-colorings of $H'$ obtained, respectively, by restricting $\phi$ and $\pi$ to $H'$.  There exists a $(\phi',\pi')$-path in $G^j_k(H')$, which we alter into a $(\phi,\pi)$-path in $G^j_k(H)$ to complete the proof.  If $\alpha'$ and $\beta'$ are adjacent colorings in $G^{j}_k(H')$, then the sets of extensions of $\alpha'$ and $\beta'$ to proper $k$-colorings of $H$ are cliques in $G^{j}_k(H)$, so we need only show that $\alpha'$ and $\beta'$ have extensions $\alpha$ and $\beta$ that are adjacent in $G^{j}_k(H)$.  

Let $F$ be a connected subgraph of $H'$ on at most $j$ vertices that includes everywhere $\alpha'$ and $\beta'$ differ.  Both $\alpha'$ and $\beta'$ can be extended to proper $k$-colorings $\alpha$ and $\beta$ of $H$ by coloring $H''$ from the list assignment $L$ defined by 
\begin{equation*}
L(v)=[k]-\{\alpha'(u):u\in V(H'),uv\in E(H)\}\cup\{\beta'(u):u\in V(H'),uv\in E(H)\}
\end{equation*} 
since $H''$ is $f^F$-choosable and 
\begin{equation*}
|\{\alpha'(u):u\in V(H'),uv\in E(H)\}\cup\{\beta'(u):u\in V(H'),uv\in E(H)\}|\leq d'(v)+d^F(v)
\end{equation*} 
for all $v\in V(H'')$.  Since $\alpha$ and $\beta$ only differ on $F$, they are adjacent in $G^{j}_k(H)$.
\end{proof}
\begin{cor}\label{gchoosecor}
If $g_k(H')\leq j$ and $H''$ is $f$-choosable, then $g_k(H)\leq j$.
\end{cor}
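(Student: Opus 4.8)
The plan is to deduce this corollary directly from Proposition \ref{gchooseprop} by showing that the single hypothesis that $H''$ is $f$-choosable already implies the family of hypotheses that $H''$ is $f^F$-choosable for each connected subgraph $F$ of $H'$ on at most $g_k(H')$ vertices. The bridge between the two is the pointwise inequality $f^F(v)\geq f(v)$ for every $v\in V(H'')$ and every relevant $F$, combined with the monotonicity of choosability under enlarging lists.

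First I would establish the bound $d^F(v)\leq\min\{d'(v),j\}$. Since $F$ is a subgraph of $H'$, the neighbors of $v$ lying in $F$ form a subset of those lying in $H'$, giving $d^F(v)\leq d'(v)$. Since those neighbors also lie in $V(F)$, and $|V(F)|\leq g_k(H')\leq j$, we get $d^F(v)\leq j$ as well. Taking the minimum and unwinding the definitions $f^F(v)=k-d'(v)-d^F(v)$ and $f(v)=k-d'(v)-\min\{d'(v),j\}$ yields $f^F(v)\geq f(v)$ for all $v\in V(H'')$.

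Next I would invoke the standard fact that $f$-choosability is preserved when list sizes only increase: if $H''$ is $f$-choosable and $f^F(v)\geq f(v)$ for all $v$, then $H''$ is $f^F$-choosable. Indeed, given any $f^F$-list assignment $L$, restricting each list $L(v)$ to any sublist $L'(v)\subseteq L(v)$ of size $f(v)$ produces an $f$-list assignment, whose guaranteed $L'$-coloring is also a valid coloring from the original, larger lists $L$.

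With $f^F$-choosability in hand for every connected subgraph $F$ of $H'$ of order at most $g_k(H')$, the hypotheses of Proposition \ref{gchooseprop} are satisfied, and the conclusion $g_k(H)\leq j$ follows at once. I do not expect a genuine obstacle here; the only point requiring care is correctly verifying the two bounds on $d^F(v)$, namely that $F\subseteq H'$ gives $d^F(v)\leq d'(v)$ while $|V(F)|\leq j$ gives $d^F(v)\leq j$, which together produce exactly the inequality against $\min\{d'(v),j\}$ needed to dominate $f$ by $f^F$.
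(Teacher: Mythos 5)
Your proof is correct and follows essentially the same route as the paper: establish $d^F(v)\leq\min\{d'(v),j\}$ from $V(F)\subseteq V(H')$ and $|V(F)|\leq j$, conclude $f^F(v)\geq f(v)$, and apply Proposition \ref{gchooseprop} via monotonicity of choosability. The only difference is that you spell out the (standard) list-restriction argument for why larger list sizes preserve choosability, which the paper leaves implicit.
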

\begin{proof}
We need only show $f(v)\leq f^F(v)$ for any connected subgraph $F$ of $H'$ on at most $j$ vertices, since then $H''$ is $f^F$-choosable, and the result follows from Proposition \ref{gchooseprop}.  We have $d'(v)\geq d^F(v)$ since $V(F)\subseteq V(H')$, and $j\geq d^F(v)$ since $F$ has at most $j$ vertices, so $f^F(v)-f(v)=\min\{d'(v),j\}-d^F(v)\geq 0$.
\end{proof}
\begin{cor}\label{gvcor}
If $H''$ is a single vertex $v$ such that $k>d_{H}(v)+\min\{d_{H}(v),g_k(H')\}$, then $g_k(H)\leq g_k(H')$.
\end{cor}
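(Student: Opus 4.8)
The plan is to obtain this as an immediate specialization of Corollary \ref{gchoosecor}, with the free ambient parameter $j$ instantiated to $g_k(H')$.

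First I would set $j=g_k(H')$. This is a legitimate choice of the section's standing parameter: the single-vertex subgraph $H''=\{v\}$ is trivially connected and has exactly one vertex, and since the mixing number always satisfies $g_k(H')\geq 1$, the requirement that $H''$ have at most $j$ vertices holds. With this choice the first hypothesis of Corollary \ref{gchoosecor}, namely $g_k(H')\leq j$, is satisfied with equality.

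Next I would check the choosability hypothesis of Corollary \ref{gchoosecor}, that $H''$ is $f$-choosable. Because $H''=\{v\}$ and $H'=H-v$, every neighbor of $v$ in $H$ lies in $H'$, so $d'(v)=d_H(v)$. Consequently
\[
f(v)=k-d'(v)-\min\{d'(v),j\}=k-d_H(v)-\min\{d_H(v),g_k(H')\},
\]
which is strictly positive precisely by the hypothesis $k>d_H(v)+\min\{d_H(v),g_k(H')\}$; hence $f(v)\geq 1$. A graph consisting of a single vertex can be colored from any nonempty list, so $H''$ is $f$-choosable. Applying Corollary \ref{gchoosecor} then gives $g_k(H)\leq j=g_k(H')$, as desired.

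The main thing to be careful about is bookkeeping rather than any genuine difficulty: one must notice the identification $d'(v)=d_H(v)$, which holds exactly because $H''$ is a single vertex (so none of $v$'s neighbors are "hidden" in $H''$), and one must confirm that the instantiation $j=g_k(H')$ is compatible with the section's standing assumptions on $H''$. No real obstacle arises, since the reduction to Corollary \ref{gchoosecor} is direct once these points are in place.
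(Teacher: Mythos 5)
Your proposal is correct and matches the paper's own proof: both set $j=g_k(H')$ in Corollary \ref{gchoosecor}, use $d'(v)=d_H(v)$ for the single vertex $v$, and observe $f(v)\geq 1$ so that $H''$ is $f$-choosable. Your version just spells out the bookkeeping slightly more explicitly.
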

\begin{proof}
Set $j=g_k(H')$ in Corollary \ref{gchoosecor}: $H''$ is $f$-choosable since $H''$ consists of a single vertex $v$ and 
\begin{equation*}
f(v)=k-d'(v)-\min\{d'(v),j\}=k-d_{H}(v)-\min\{d_{H}(v),g_k(H')\}\geq 1.
\end{equation*}
\end{proof}
\begin{cor}
Suppose $g_k(H')\geq 2$ and $H''$ is an edge $uv$.  If $d_H(v)\geq d_H(u)$ and $k\geq d_{H}(v)+\min\{d_{H}(v)-1,g_k(H')\}$, with at least one of these a strict inequality, then $g_k(H)\leq g_k(H')$.
\end{cor}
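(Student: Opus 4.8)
The plan is to set $j=g_k(H')$ and apply Corollary~\ref{gchoosecor}. The hypothesis $g_k(H')\geq 2$ guarantees that the two-vertex graph $H''=uv$ has at most $j$ vertices, as required by the standing assumptions of the section, and $g_k(H')\leq j$ holds with equality. Thus the whole task reduces to verifying that the edge $H''=uv$ is $f$-choosable, where $f(v)=k-d'(v)-\min\{d'(v),j\}$ with $j=g_k(H')$. First I would record the choosability criterion for a single edge: with prescribed list sizes $f(u)$ and $f(v)$, the edge $uv$ fails to be $f$-choosable exactly when some vertex receives an empty list or both lists can be made equal singletons, so $uv$ is $f$-choosable if and only if $f(u)\geq 1$, $f(v)\geq 1$, and $\max\{f(u),f(v)\}\geq 2$.

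Next I would translate the hypotheses into statements about $f$. Since $u$ is the unique neighbor of $v$ inside $H''$ (and symmetrically), we have $d'(v)=d_H(v)-1$ and $d'(u)=d_H(u)-1$, so $f(v)=k-d_H(v)+1-\min\{d_H(v)-1,\,g_k(H')\}$ and analogously for $u$. The hypothesis $k\geq d_H(v)+\min\{d_H(v)-1,g_k(H')\}$ then rearranges directly to $f(v)\geq 1$. Because $d_H(u)\leq d_H(v)$ forces both $d_H(u)-1\leq d_H(v)-1$ and $\min\{d_H(u)-1,g_k(H')\}\leq\min\{d_H(v)-1,g_k(H')\}$, I would obtain the difference formula $f(u)-f(v)=(d_H(v)-d_H(u))+(\min\{d_H(v)-1,g_k(H')\}-\min\{d_H(u)-1,g_k(H')\})\geq 0$, so that $f(u)\geq f(v)\geq 1$ and both lists are nonempty.

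It then remains only to boost the maximum to $2$, and this is exactly where the ``at least one strict inequality'' clause enters. If the inequality $k\geq d_H(v)+\min\{d_H(v)-1,g_k(H')\}$ is strict, the rearrangement improves to $f(v)\geq 2$; if instead $d_H(v)>d_H(u)$, then the displayed difference $f(u)-f(v)$ picks up a summand $d_H(v)-d_H(u)\geq 1$, forcing $f(u)\geq f(v)+1\geq 2$. In either case $\max\{f(u),f(v)\}\geq 2$, so the edge $uv$ is $f$-choosable and Corollary~\ref{gchoosecor} yields $g_k(H)\leq g_k(H')$.

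The step I expect to be the main obstacle is this last one: tracking which of the two hypotheses is strict and confirming that each scenario separately pushes the larger of $f(u),f(v)$ up to $2$ while the smaller stays at least $1$. The edge-choosability criterion is elementary and the degree bookkeeping $d'(v)=d_H(v)-1$ is routine, so the case analysis driven by the strictness clause carries the real content of the argument.
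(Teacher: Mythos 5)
Your proposal is correct and follows essentially the same route as the paper: set $j=g_k(H')$, apply Corollary~\ref{gchoosecor}, use $d'(u)=d_H(u)-1$ and $d'(v)=d_H(v)-1$, and verify the edge is $f$-choosable. The only cosmetic difference is that the paper writes $d_H(v)=d_H(u)+\alpha$ and $k=d_H(v)+\min\{d_H(v)-1,g_k(H')\}+\beta$ with $\alpha+\beta\geq 1$ and gets $f(u)\geq\alpha+\beta+1\geq 2$, $f(v)=\beta+1\geq 1$ in one computation, whereas you reach the same bounds by an explicit two-case split on which hypothesis is strict.
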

\begin{proof}
Set $j=g_k(H')$ in Corollary \ref{gchoosecor}: letting $d_H(v)=d_H(u)+\alpha$ and $k=d_{H}(v)+\min\{d_{H}(v)-1,g_k(H')\}+\beta$ (so $\alpha+\beta\geq 1$), $H''$ is $f$-choosable since $H''$ consists of an edge $uv$ and 
\begin{align*}
f(u) &=k-d'(u)-\min\{d'(u),j\} \\
&=k-d_{H}(u)+1-\min\{d_{H}(u)-1,g_k(H')\} \\
&\geq k-d_{H}(v)+\alpha+1-\min\{d_{H}(v)-1,g_k(H')\} \\
&=\alpha+\beta+1 \\
&\geq 2
\end{align*}
and
\begin{align*}
f(v) &=k-d'(v)-\min\{d'(v),j\} \\
&=k-d_{H}(v)+1-\min\{d_{H}(v)-1,g_k(H')\} \\
&=\beta+1 \\
&\geq 1.
\end{align*}
\end{proof}

We now turn to the parameter $h_k(H)$.
\begin{prop}\label{hvpropgen}
If $H''$ is $(k-d'(v))$-choosable, then $h_k(H)\leq h_k(H')+j$.
\end{prop}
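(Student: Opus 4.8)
The plan is to mirror the structure of the proof of Proposition \ref{gsubpropgen}, but to lift a Hamiltonian cycle of $G^{\ell}_k(H')$, where $\ell=h_k(H')$, into one of $G^{\ell+j}_k(H)$. Write the proper $k$-colorings of $H'$ as the vertices of a Hamiltonian cycle $\phi'_1\phi'_2\cdots\phi'_N\phi'_1$ of $G^{\ell}_k(H')$. For each $i$, let $E_i$ be the set of proper $k$-colorings of $H$ that restrict to $\phi'_i$ on $H'$; the $(k-d'(v))$-choosability hypothesis guarantees, exactly as in Proposition \ref{gsubpropgen}, that each $E_i$ is nonempty, and the $E_i$ partition the vertex set of $G^{\ell+j}_k(H)$. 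Two extensions of the same $\phi'_i$ differ only on $V(H'')$, a connected set of at most $j\le\ell+j$ vertices, so each $E_i$ is a clique in $G^{\ell+j}_k(H)$. Thus the task reduces to finding a Hamiltonian cycle in the \emph{spanning} subgraph $\tilde G$ of $G^{\ell+j}_k(H)$ obtained from $C_N$ by replacing each vertex $i$ with the clique $E_i$ and inserting the edges of $G^{\ell+j}_k(H)$ joining consecutive cliques.

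First I would classify those joining edges. Fixing for each $i$ a connected $F\subseteq H'$ on at most $\ell$ vertices witnessing the adjacency of $\phi'_i$ and $\phi'_{i+1}$, the case split from the proof of Proposition \ref{gsubpropgen} applies. If some edge of $H$ joins $V(H'')$ and $V(F)$, then every extension of $\phi'_i$ is adjacent to every extension of $\phi'_{i+1}$, so $E_i$ and $E_{i+1}$ are joined by a complete bipartite graph. Otherwise $\phi'_i$ and $\phi'_{i+1}$ agree on $N_H(v)\cap V(H')$ for every $v\in V(H'')$, so $E_i$ and $E_{i+1}$ have the same set of admissible $H''$-colorings, and the two extensions sharing a common $H''$-coloring are adjacent; this yields a perfect matching between $E_i$ and $E_{i+1}$, which in particular forces $|E_i|=|E_{i+1}|$.

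The heart of the proof is then a purely combinatorial lemma: any cycle in which each vertex is blown up into a nonempty clique and each edge becomes either a complete bipartite join or a perfect matching (the latter forcing equal clique sizes) is Hamiltonian. I would prove this by cutting $\tilde G$ at its complete joins into maximal \emph{runs} of matching-joined cliques. Because matchings force equal sizes, each run is isomorphic to $K_t \square P_m$ and carries a ``snake'' Hamiltonian path that sweeps each clique as a consecutive block and has its two endpoints in the extreme layers of the run. Since the joins between runs are complete bipartite, these snake paths can be spliced endpoint to endpoint in cyclic order and closed into a single Hamiltonian cycle of $\tilde G$, giving $h_k(H)\le\ell+j=h_k(H')+j$.

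The main obstacle — and the reason the snake/run machinery is needed rather than the naive recipe of listing the extensions of each $\phi'_i$ consecutively — is the case where \emph{every} join is a matching, so that $\tilde G=C_N \square K_t$ and no complete join is available as a splice point. Here one cannot in general traverse each clique as a single block and still close the cycle; for $t=2$ and odd $N$ this would require a proper $2$-coloring of $C_N$, which does not exist. I would therefore dispatch this case separately, using that the prism-like graph $C_N \square K_t$ is Hamiltonian for every $N\ge 3$ and $t\ge 1$ (directly for $t\le 2$ via the standard ``up one column, across, and back'' cycle, and as a product of Hamiltonian graphs for $t\ge 3$). Closing the lifted path into a cycle is exactly the delicate point flagged in the introduction.
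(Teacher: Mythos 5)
Your proof is correct, and its skeleton coincides with the paper's: lift a Hamiltonian cycle of $G^{\ell}_k(H')$, observe that the extension sets $E_i$ are nonempty cliques (nonemptiness from $(k-d'(v))$-choosability), classify each seam as a complete join or a perfect matching according to whether the witnessing subgraph $F$ meets $N(V(H''))$, and traverse each $E_i$ as a consecutive block. Where you genuinely diverge is in closing the cycle. The paper first reduces to the case that $H''$ is not its own component of $H$, then chooses the base cycle $[\phi^1,\ldots,\phi^b]$ so that $\phi^1$ and $\phi^b$ differ on a neighbor of a vertex of $H''$ --- i.e., it rotates the cycle so that the wrap-around seam is a complete join --- and threads the matching seams exactly as your snake does, so that the one guaranteed complete join closes the cycle for free. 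You instead cut at every complete join, splice the snake paths of the resulting runs, and treat the possibility that no complete join exists as a separate case via Hamiltonicity of $C_N\square K_t$ (legitimately identified with $\tilde G$ there, since in the all-matching case every $\phi^i$ agrees on $N(V(H''))\cap V(H')$, so all matchings are induced by equality of $H''$-colorings and compose consistently around the cycle). What your route buys is uniformity: you need neither the reduction to $H''$ not being its own component nor the tacit argument that some consecutive pair of base colorings must differ on a neighbor of $H''$, a point the paper asserts without proof when it stipulates the form of $C'$. What it costs is the extra case analysis and the appeal to prism Hamiltonicity; the paper's rotation trick is shorter once one grants that a complete-join seam exists under its standing assumptions. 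Both arguments are sound.
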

\begin{proof}
We may assume that $H''$ is not its own component of $H$, since otherwise we would have $h_k(H)\leq\max\{h_k(H'),h_k(H'')\}\leq h_k(H')+j$.  Set $\ell=h_k(H')$, so there exists a Hamiltonian cycle $C'=[\phi^1,\ldots,\phi^b]$ through $G^{\ell}_k(H')$ such that $\phi^1$ and $\phi^b$ differ on a neighbor of a vertex in $H''$.  To complete the proof, we alter $C'$ into a Hamiltonian cycle $C$ through $G^{\ell+j}_k(H)$ such that the extensions of each $\phi^i$ appear consecutively in $C$, starting with $\alpha^i$ and ending with $\beta^i$.  Note that each $\phi^i$ can be extended to a proper $k$-coloring of $H$ by coloring $H''$ from the list assignment $L$ defined by $L(v)=[k]-\{\phi^i(u):u\in V(H'),uv\in E(H)\}$, since $H''$ is $(k-d'(v))$-choosable.  Thus the set of extensions of each $\phi^i$ to a proper $k$-coloring of $H$ is a nonempty clique in $G^{\ell+j}_k(H)$, so it suffices to order the extensions of each $\phi^i$ in any manner such that the last extension $\beta^i$ of $\phi^i$ is adjacent to the first extension $\alpha^{i+1}$ of $\phi^{i+1}$ in $G^{\ell+j}_k(H)$ (setting $b+1=1$).

Put the extensions of $\phi^1$ in any order, designating the first as $\alpha^1$ and the last as $\beta^1$.  Now consider $2<i\leq b$, and let $F$ be a connected subgraph of $H'$ on at most $\ell$ vertices that includes everywhere $\phi^{i-1}$ and $\phi^{i}$ differ.  If $H$ contains no edge joining $H''$ and $F$, then any coloring of $H''$ that extends $\phi^{i-1}$ to a proper $k$-coloring of $H$ also extends $\phi^{i}$ to a proper $k$-coloring of $H$, and these extensions are adjacent in $G^{\ell+j}_k(H)$ since they still only differ on $F$.  In this case, let $\alpha^i$ be any neighbor of $\beta^{i-1}$, and put the remaining extensions of $\phi^i$ in any order, designating the last as $\beta^i$.  If some edge in $H$ joins $H''$ and $F$, then any extension of $\phi^{i-1}$ is adjacent in $G^{\ell+j}_k(H)$ to any extension of $\phi^i$, since they differ only on the subgraph of $H$ induced by $V(F)\cup V(H'')$, which is connected and has at most $\ell+j$ vertices.  In this case, put the extensions of $\phi^i$ in any order.  Since we stipulated that $\phi^1$ and $\phi^b$ differ on a neighbor of a vertex in $H''$, this completes the Hamiltonian cycle $C$.
\end{proof}
\begin{cor}\label{hvprop}
If $H''$ consists of a single vertex $v$ having degree less than $k$ in $H$, then $h_k(H)\leq h_k(H')+1$.
\end{cor}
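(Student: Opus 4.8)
The plan is to obtain this corollary as the immediate $h_k$-analog of Corollary \ref{gsubprop}, by specializing Proposition \ref{hvpropgen} to the case $j=1$. This is exactly the parallel of how Corollary \ref{gsubprop} was deduced from Proposition \ref{gsubpropgen}: there one set $j=1$ and checked a single choosability condition, and the same two-step verification should work here.

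First I would confirm that the single vertex $v$ fits the standing hypotheses on $H''$ with $j=1$. A single vertex is trivially connected and has exactly one vertex, so taking $H''$ to be $v$ and $j=1$ is legitimate, with $H'=H-v$. Second, I would verify the only substantive hypothesis of Proposition \ref{hvpropgen}, namely that $H''$ is $(k-d'(v))$-choosable. Because $v$ is the sole vertex of $H''$, it has no neighbors inside $H''$, so every neighbor of $v$ in $H$ lies in $H'$; hence $d'(v)=|N_H(v)\cap V(H')|=d_H(v)$. The hypothesis $d_H(v)<k$ then gives $k-d'(v)=k-d_H(v)\geq 1$. Since a one-vertex graph is $f$-choosable whenever $f(v)\geq 1$ (any nonempty list supplies a color), $H''$ is $(k-d'(v))$-choosable.

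With both hypotheses in hand, Proposition \ref{hvpropgen} with $j=1$ yields $h_k(H)\leq h_k(H')+j=h_k(H')+1$, as desired. I do not expect any genuine obstacle here, since the statement is a direct specialization; the only point meriting care is the bookkeeping identity $d'(v)=d_H(v)$, which holds precisely because $H''$ consists of $v$ alone and is what converts the degree condition $d_H(v)<k$ into the required choosability of $H''$.
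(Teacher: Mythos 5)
Your proposal is correct and follows exactly the paper's own route: the paper likewise deduces this corollary by setting $j=1$ in Proposition \ref{hvpropgen} and noting that $k-d'(v)\geq 1$ makes the single vertex $(k-d'(v))$-choosable. Your additional remark that $d'(v)=d_H(v)$ because $H''$ has no internal edges is the same bookkeeping step the paper leaves implicit.
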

\begin{proof}
We have $k-d'(v)\geq 1$, so $H''$ is $(k-d'(v))$-choosable, so the result follows by setting $j=1$ in Proposition \ref{hvpropgen}.
\end{proof}

For distinct vertices $u$ and $v$ of $H''$ and a subgraph $F$ of $H'$, recall that $f^F_u(u)=f^F(u)-1$ and $f^F_u(v)=f^F(v)$.
\begin{lem}\label{adjext2}
Suppose $\phi$ and $\phi'$ are adjacent in $G^j_k(H')$, so the set of vertices on which $\phi$ and $\phi'$ differ lies in some connected subgraph $F$ of $H'$ on at most $j$ vertices.  If there exists $u\in V(H'')$ such that $H''$ is $f^F_u$-choosable, then there exist distinct proper $k$-colorings $\pi$ and $\rho$ of $H''$ each of which extends both $\phi$ and $\phi'$ to adjacent colorings in $G^j_k(H)$.
\end{lem}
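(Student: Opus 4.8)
The plan is to mimic the extension argument of Proposition~\ref{gchooseprop}, but to extract \emph{two} colorings of $H''$ rather than one, exploiting the extra color made available by the reduction $f^F_u(u)=f^F(u)-1$.

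First I would record the relevant list assignment. For each $v\in V(H'')$ set
\[
L(v)=[k]-\bigl(\{\phi(w):w\in V(H'),\,wv\in E(H)\}\cup\{\phi'(w):w\in V(H'),\,wv\in E(H)\}\bigr),
\]
and write $\phi\cup\pi$ for the coloring of $H$ agreeing with $\phi$ on $H'$ and with a coloring $\pi$ of $H''$ on $V(H'')$. The point of $L$ is that any proper coloring $\pi$ of $H''$ with $\pi(v)\in L(v)$ for all $v$ simultaneously extends $\phi$ and $\phi'$ to proper $k$-colorings $\phi\cup\pi$ and $\phi'\cup\pi$ of $H$; moreover these two extensions agree on $H''$ and hence differ only where $\phi$ and $\phi'$ differ, i.e.\ inside the connected set $V(F)$ with $|V(F)|\le j$, so they are adjacent in $G^j_k(H)$ (here $\phi\neq\phi'$, since adjacent vertices of $G^j_k(H')$ are distinct). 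Exactly as in the size estimate of Proposition~\ref{gchooseprop}, a neighbor $w\notin V(F)$ of $v$ satisfies $\phi(w)=\phi'(w)$, so the forbidden set above has size at most $d'(v)+d^F(v)$ and therefore $|L(v)|\ge f^F(v)$.

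With this in hand the lemma reduces to producing two \emph{distinct} colorings of $H''$ from $L$. Since $|L(v)|\ge f^F(v)\ge f^F_u(v)$ for every $v$, choosing a size-$f^F_u(v)$ sublist of each $L(v)$ and invoking $f^F_u$-choosability yields a proper coloring $\pi$ of $H''$ with $\pi(v)\in L(v)$. To obtain a second coloring I would delete $\pi(u)$ from the list at $u$: the assignment $L'$ given by $L'(u)=L(u)-\{\pi(u)\}$ and $L'(v)=L(v)$ for $v\neq u$ still satisfies $|L'(u)|\ge f^F(u)-1=f^F_u(u)$ and $|L'(v)|\ge f^F_u(v)$ for $v\neq u$, so a second application of $f^F_u$-choosability produces a proper coloring $\rho$ of $H''$ with $\rho(v)\in L(v)$ and $\rho(u)\neq\pi(u)$. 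Hence $\pi\neq\rho$, and by the first paragraph each of $\pi$ and $\rho$ extends $\phi$ and $\phi'$ to adjacent colorings in $G^j_k(H)$.

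The genuinely new point over the $g_k$-propositions is the need for two colorings, and the whole weight of the argument rests on the single unit of slack encoded in $f^F_u(u)=f^F(u)-1$: that one extra color in $L(u)$ is exactly what lets the color $\pi(u)$ be forbidden on the second pass while keeping $u$ colorable. I expect the only steps needing care to be the bookkeeping that $|L(v)|\ge f^F(v)$ (the union bound on forbidden colors, where neighbors outside $F$ contribute a single repeated color) together with the implicit fact that $f^F_u$-choosability forces $f^F_u(v)\ge1$, so that every list used above is nonempty.
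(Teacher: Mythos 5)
Your proof is correct and follows essentially the same route as the paper: the same list assignment $L(v)=[k]-S(v)$, the same bound $|L(v)|\ge f^F(v)\ge f^F_u(v)$, and the same device of deleting $\pi(u)$ from $L(u)$ and reapplying $f^F_u$-choosability to obtain a second, distinct coloring $\rho$. (You even state the inequality $f^F(v)\ge f^F_u(v)$ in the correct direction, where the paper's displayed chain contains a typo.)
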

\begin{proof}
For each $v\in V(H'')$, let $S(v)$ be the set of all colors used by $\phi$ and $\phi'$ on neighbors of $v$ in $H'$.  Define the list assignment $L$ for $H''$ by $L(v)=[k]-S(v)$, so any $L$-coloring of $H''$ extends $\phi$ and $\phi'$ to proper $k$-colorings of $H$ adjacent in $G^j_k(H)$ (since they would differ only on $F$).  To finish the proof, we use the fact that $H''$ is $f^F_u$-choosable to find distinct $L$-colorings $\pi$ and $\rho$ of $H''$.  Indeed, we can construct a $L$-coloring $\pi$ because, for all $v\in V(H'')$, 
\begin{equation*}
|L(v)|=k-|S(v)|\geq k-|N_H(v)\cap V(H')|-|N_H(v)\cap V(F)|=f^F(v)\leq f^F_u(v).
\end{equation*}
Now, obtain the $f^F_u$-list assignment $L'$ from $L$ by deleting $\pi(u)$ from $L(u)$.  We can find an $L'$-coloring $\rho$ because 
\begin{equation*}
|L'(u)|=|L(u)|-1\geq f^F(u)-1=f^F_u(u).
\end{equation*}
Since $\pi(u)\neq \rho(u)$ and $L'(v)\subseteq L(v)$ for each $v\in V(H'')$, $\pi$ and $\rho$ are distinct $L$-colorings.
\end{proof}
\begin{prop}\label{hchooseprop}
If $h_k(H')\leq j$, and for each connected subgraph $F$ of $H'$ on at most $j$ vertices, there exists $u\in V(H'')$ such that $H''$ is $f^F_u$-choosable, then $h_k(H)\leq j$.
\end{prop}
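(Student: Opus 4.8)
The plan is to lift a Hamiltonian cycle of $G^j_k(H')$ to one of $G^j_k(H)$, following the pattern outlined at the end of Section~\ref{sec:grayintro}: list consecutively the extensions of each proper $k$-coloring of $H'$. Since $h_k(H')\le j$, there is a Hamiltonian cycle $C'=[\phi^1,\ldots,\phi^b]$ through $G^j_k(H')$. For each $i$, let $Q_i$ be the set of proper $k$-colorings of $H$ whose restriction to $H'$ is $\phi^i$; any two such colorings differ only on $V(H'')$, a connected set of at most $j$ vertices, so $Q_i$ is a clique of $G^j_k(H)$. Every coloring of $H$ restricts to some $\phi^i$, so the sets $Q_1,\ldots,Q_b$ partition $V(G^j_k(H))$, and the goal is to traverse them in the cyclic order $Q_1,\ldots,Q_b$, covering each $Q_i$ by a Hamiltonian path of its complete graph and bridging $Q_i$ to $Q_{i+1}$ across the edge $\phi^i\phi^{i+1}$ of $C'$.

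The bridges come from Lemma~\ref{adjext2}. For each edge $\phi^i\phi^{i+1}$ of $C'$, whose differences lie in some connected $F\subseteq H'$ on at most $j$ vertices, the lemma supplies two distinct colorings $\pi^i,\rho^i$ of $H''$, each extending both $\phi^i$ and $\phi^{i+1}$ to adjacent colorings of $G^j_k(H)$. Selecting a bridge color $x^i\in\{\pi^i,\rho^i\}$ designates the extension of $\phi^i$ by $x^i$ as the exit of $Q_i$ and the extension of $\phi^{i+1}$ by $x^i$ as the entrance of $Q_{i+1}$, and these are adjacent in $G^j_k(H)$. Applying the lemma to the edge $\phi^{i-1}\phi^i$ also shows that $Q_i$ contains the two distinct colorings $\pi^{i-1},\rho^{i-1}$, so $|Q_i|\ge 2$; hence, since a complete graph on at least two vertices has a Hamiltonian path between any two distinct vertices, the block at $Q_i$ can be realized precisely when its designated entrance and exit are distinct, that is, when $x^{i-1}\neq x^i$ as colorings of $H''$.

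The construction therefore reduces to choosing $x^i\in\{\pi^i,\rho^i\}$ for all $i$ with $x^{i-1}\neq x^i$ read cyclically, which is exactly a proper $2$-list-coloring of the cycle whose vertices are the transitions of $C'$. For even $b$ this is automatic, and in general one can assign the $x^i$ greedily to obtain $x^1\neq x^2\neq\cdots\neq x^b$; the genuinely delicate point, which I expect to be the main obstacle, is forcing the final inequality $x^b\neq x^1$ so that the Hamiltonian \emph{path} through the cliques closes into a cycle. This is the very difficulty flagged at the end of Section~\ref{sec:grayintro}. The two choices at each transition give slack to repair the closing unless $b$ is odd and every pair $\{\pi^i,\rho^i\}$ equals a single pair $\{\pi,\rho\}$; in that exceptional case $\pi$ and $\rho$ extend every $\phi^i$, so consecutive cliques are joined by both the $\pi$-bridge and the $\rho$-bridge, and I would close the cycle by exploiting further adjacencies between consecutive cliques—colorings whose $H''$-parts differ only slightly can still be adjacent in $G^j_k(H)$ when the combined set of differences stays within a connected subgraph on at most $j$ vertices—to reroute at a single clique. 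Verifying that such a reroute is always available is the crux of the argument.
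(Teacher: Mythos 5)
Your setup coincides with the paper's: decompose $V(G^j_k(H))$ into the cliques $Q_i$ of extensions of the $\phi^i$, use Lemma~\ref{adjext2} to produce two candidate bridge colorings $\pi^i,\rho^i$ of $H''$ at each transition, and reduce to choosing $x^i\in\{\pi^i,\rho^i\}$ with $x^{i-1}\neq x^i$ read cyclically. That reduction, and your identification of the one obstruction (an odd number of transitions with every candidate pair equal to a single $\{\pi,\rho\}$), are correct. But the proof is not finished: you explicitly leave unverified the ``reroute'' meant to handle the exceptional case, and the mechanism you sketch for it does not work in general. Two extensions that differ both on $H''$ and on the set $F$ where $\phi^i$ and $\phi^{i+1}$ differ need not be adjacent in $G^j_k(H)$: if no edge of $H$ joins $F$ to $H''$, the difference set induces a disconnected subgraph, and even when such an edge exists, $|V(F)|+|V(H'')|$ can exceed $j$. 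So the crux you flag is a genuine gap, not a routine verification.

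The paper closes the cycle differently, and the missing observation is short. First reduce to the case that $H''$ is not its own component of $H$ (otherwise the Cartesian product bound gives $h_k(H)\leq\max\{h_k(H'),h_k(H'')\}\leq j$). Then some $v\in V(H'')$ has a neighbor $w\in V(H')$, and since some proper $k$-coloring of $H'$ gives $w$ the color $\pi^1(v)$, the coloring $\pi^1$ fails to extend some $\phi^i$; in particular your exceptional case, in which $\pi$ and $\rho$ would extend every $\phi^i$, cannot occur. More usefully, one can choose $m$ so that $\pi^1$ extends $\phi^1,\ldots,\phi^m$ but not $\phi^{m+1}$, anchor the selection by using $\pi^1$ at $Q_m$, and propagate the greedy ``pick the candidate not already used next door'' rule backwards around the entire cycle from there. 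The cyclic constraint then closes automatically: the two bridge candidates supplied by Lemma~\ref{adjext2} for the transition from $\phi^m$ to $\phi^{m+1}$ must extend $\phi^{m+1}$, so neither can equal $\pi^1$, and the entrance and exit of $Q_m$ are forced to differ. With that anchoring argument in place of your reroute, your proof becomes the paper's.
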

\begin{proof}
We may assume that $H''$ is not its own component of $H$, since otherwise we would have $h_k(H)\leq\max\{h_k(H'),h_k(H'')\}\leq j$ ($G^j_k(H'')$ is a complete graph since $H''$ is a connected graph on at most $j$ vertices, so $h_k(H'')=1$).  There exists a Hamiltonian cycle $C'=[\phi^1,\ldots,\phi^b]$ through $G^{j}_k(H')$; to complete the proof, we alter $C'$ into a Hamiltonian cycle $C$ through $G^{j}_k(H)$ such that the extensions of each $\phi^i$ appear consecutively in $C$, starting with $\alpha^i$ and ending with $\beta^i$.  By Lemma \ref{adjext2}, for each $i\in[b]$ there exist distinct proper $k$-colorings $\pi^i$ and $\rho^i$ of $H''$ each of which extend both $\phi^i$ and $\phi^{i-1}$ to adjacent colorings in $G^j_k(H)$.  Thus the set of extensions of each $\phi^i$ to a proper $k$-coloring of $H$ is a nonempty clique in $G^{j}_k(H)$, so it suffices to order the extensions of each $\phi^i$ in any manner such that the last extension $\beta^{i-1}$ of $\phi^{i-1}$ is adjacent to the first extension $\alpha^i$ of $\phi^{i}$ in $G^{j}_k(H)$ (setting $b+1=1$).

Certainly $\pi^1$ does not extend every proper $k$-coloring of $H'$ to a proper $k$-coloring of $H$ (by assumption some vertex $v$ in $H'$ neighbors a vertex in $H''$, and some proper $k$-coloring of $H'$ colors a neighbor of $v$ in $H'$ with $\pi^1(v)$).  Hence there exists $m\in[b-1]$ such that $\pi^1$ extends $\phi^1,\ldots,\phi^m$ to proper $k$-colorings of $H$, but $\pi^1$ does not extend $\phi^{m+1}$ to a proper $k$-coloring of $H$.  Let $\alpha^{m}$ be obtained from $\phi^m$ by coloring $H''$ according to $\pi^1$, and for $i\neq m$ let $\alpha^i$ be obtained from $\phi^i$ by coloring $H''$ according to whichever of $\pi^i$ or $\rho^i$ was not used for $\alpha^{i+1}$ (possibly neither $\pi^i$ nor $\rho^i$ was used for $\alpha^{i+1}$).  Thus for $i\in[b]$, $\alpha^i$ is adjacent in $G^j_k(H)$ to the extension $\beta^{i-1}$ of $\phi^{i-1}$ obtained by coloring $H''$ in the same way as $\alpha^i$, and $\beta^{i-1}\neq\alpha^{i-1}$ because they disagree on $H''$ ($\alpha^{m}\neq\alpha^{m+1}$ on $H''$ since $\phi^{m+1}$ cannot be extended to $H$ by coloring $H''$ according to $\pi^1$).  Put the other extensions of $\phi^{i}$ in any order between $\alpha^i$ and $\beta^i$.  This gives a Hamiltonian cycle through $G^j_k(H)$, since $\beta^{i-1}$ is adjacent to $\alpha^{i}$ in $G^{j}_k(H)$.
\end{proof}
For distinct vertices $u$ and $v$ of $H''$, recall that $f_u(u)=f(u)-1$ and $f_u(v)=f(v)$.  
\begin{cor}\label{hchoosecor}
If $h_k(H')\leq j$, and there exists $u\in V(H'')$ such that $H''$ is $f_u$-choosable, then $h_k(H)\leq j$.
\end{cor}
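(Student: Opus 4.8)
The plan is to obtain this corollary from Proposition \ref{hchooseprop} in precisely the way Corollary \ref{gchoosecor} was obtained from Proposition \ref{gchooseprop}. The hypothesis here furnishes a single vertex $u\in V(H'')$ for which $H''$ is $f_u$-choosable, whereas Proposition \ref{hchooseprop} asks, for each connected subgraph $F$ of $H'$ on at most $j$ vertices, for some vertex (a priori depending on $F$) for which $H''$ is $f^F_u$-choosable. I would show that this one fixed $u$ serves for every such $F$ simultaneously.

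The main step is the pointwise inequality $f_u(v)\leq f^F_u(v)$ for all $v\in V(H'')$ and every connected subgraph $F$ of $H'$ on at most $j$ vertices. Since $f_u(v)=f(v)-\delta_{u,v}$ and $f^F_u(v)=f^F(v)-\delta_{u,v}$, subtracting the common indicator reduces this to $f(v)\leq f^F(v)$, which is exactly the inequality established in the proof of Corollary \ref{gchoosecor}: one has $d^F(v)\leq d'(v)$ because $V(F)\subseteq V(H')$, and $d^F(v)\leq j$ because $F$ has at most $j$ vertices, whence $f^F(v)-f(v)=\min\{d'(v),j\}-d^F(v)\geq 0$.

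With $f_u\leq f^F_u$ in hand, $f_u$-choosability of $H''$ forces $f^F_u$-choosability: given any $f^F_u$-list assignment, shrink each list to a sublist of the smaller size $f_u(v)$, invoke $f_u$-choosability to color from these sublists, and note the resulting coloring is drawn from the original lists as well. Hence the hypothesis of Proposition \ref{hchooseprop} holds with this single $u$, and the proposition yields $h_k(H)\leq j$. I anticipate no real obstacle here: the corollary is a clean specialization of Proposition \ref{hchooseprop} via the monotonicity of choosability under enlarging lists, and the needed inequality $f\leq f^F$ is already available from the proof of Corollary \ref{gchoosecor}. The only point warranting a moment's care is that removing the same $\delta_{u,v}$ from both sides preserves the inequality, which it plainly does.
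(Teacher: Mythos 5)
Your proposal is correct and follows the paper's own argument exactly: establish the pointwise inequality $f_u(v)\leq f^F_u(v)$ via $f^F_u(v)-f_u(v)=\min\{d'(v),j\}-d^F(v)\geq 0$, conclude $f^F_u$-choosability from $f_u$-choosability by monotonicity, and invoke Proposition \ref{hchooseprop} with the single fixed $u$. The only difference is that you spell out the list-shrinking argument for monotonicity of choosability, which the paper leaves implicit.
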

\begin{proof}
We need only show $f_u(v)\leq f^F_u(v)$ for each $u,v\in V(H'')$ and connected subgraph $F$ of $H'$ on at most $j$ vertices, since then $H''$ would be $f^F_u$-choosable, and the result would follow from Proposition \ref{hchooseprop}.  We have $d'(v)\geq d^F(v)$ since $V(F)\subseteq V(H')$, and $j\geq d^F(v)$ since $F$ has at most $j$ vertices, so $f^F_u(v)-f_u(v)=\min\{d'(v),j\}-d^F(v)\geq 0$.
\end{proof}
\begin{cor}\label{hvcor}
If $H''$ is a single vertex $u$ and $k\geq 2+d_{H}(u)+\min\{d_{H}(u),h_k(H')\}$, then $h_k(H)\leq h_k(H')$.
\end{cor}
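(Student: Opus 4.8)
The plan is to derive this directly from Corollary \ref{hchoosecor} by choosing $j=h_k(H')$. With this choice the hypothesis $h_k(H')\leq j$ holds (with equality), so everything reduces to verifying the choosability condition of Corollary \ref{hchoosecor}: that there exists $u\in V(H'')$ for which $H''$ is $f_u$-choosable. Since $H''$ here is the single vertex $u$ itself, there is only one candidate, and I would exploit the elementary fact that a single vertex, having no internal edges, is $g$-choosable for a function $g$ precisely when $g(u)\geq 1$, because any nonempty color list admits a coloring.

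First I would record that, because $H''$ consists of just the vertex $u$, every neighbor of $u$ in $H$ lies in $H'=H-\{u\}$, so $d'(u)=d^{H'}(u)=d_H(u)$. Next I would compute $f_u(u)$ straight from the definitions: $f_u(u)=f(u)-1=k-d'(u)-\min\{d'(u),j\}-1$. Substituting $d'(u)=d_H(u)$ and $j=h_k(H')$ yields $f_u(u)=k-d_H(u)-\min\{d_H(u),h_k(H')\}-1$. The hypothesis $k\geq 2+d_H(u)+\min\{d_H(u),h_k(H')\}$ is exactly the assertion that this quantity is at least $1$.

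Thus $f_u(u)\geq 1$, so the single vertex $H''$ is $f_u$-choosable, and Corollary \ref{hchoosecor} delivers $h_k(H)\leq j=h_k(H')$, as desired. The genuinely hard work — extending a Hamiltonian cycle of $G^j_k(H')$ to one of $G^j_k(H)$ and, in particular, closing it up — has already been dispatched in Lemma \ref{adjext2} and Proposition \ref{hchooseprop}, so no comparable obstacle arises here. The only points requiring care are the bookkeeping identity $d'(u)=d_H(u)$ (valid precisely because $H''$ is a single vertex with no neighbors inside $H''$) and the observation that single-vertex $f_u$-choosability collapses to the nonemptiness condition $f_u(u)\geq 1$; once these are in hand, the arithmetic matches the stated bound on $k$ exactly.
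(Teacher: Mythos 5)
Your proposal is correct and follows exactly the paper's own argument: set $j=h_k(H')$ in Corollary \ref{hchoosecor}, note $d'(u)=d_H(u)$ since $H''$ is the single vertex $u$, and check $f_u(u)=k-d_H(u)-\min\{d_H(u),h_k(H')\}-1\geq 1$, which is precisely the stated hypothesis on $k$. No differences worth noting (the paper's displayed computation even contains a harmless typo writing $g_k(H')$ for $h_k(H')$, which your version silently corrects).
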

\begin{proof}
Set $j=h_k(H')$ in Corollary \ref{hchoosecor}: $H''$ is $f_u$-choosable since $H''$ consists of a single vertex $u$ and 
\begin{equation*}
f_u(u)=f(u)-1=k-d'(u)-\min\{d'(u),j\}-1=k-d_{H}(u)-\min\{d_{H}(u),g_k(H')\}-1\geq 1.
\end{equation*}
\end{proof}
\begin{cor}
Suppose $h_k(H')\geq 2$ and $H''$ is a single edge $uv$.  If $d_H(v)\geq d_H(u)$ and $k\geq d_{H}(v)+\min\{d_{H}(v)-1,h_k(H')\}+1$, or if $d_H(v)\geq d_H(u)+2$ and $k\geq d_{H}(v)+\min\{d_{H}(v)-1,h_k(H')\}$, then $h_k(H)\leq h_k(H')$.
\end{cor}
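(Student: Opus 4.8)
The plan is to set $j=h_k(H')\geq 2$ and apply Corollary~\ref{hchoosecor}, mirroring the way the preceding $g_k$ corollary applies Corollary~\ref{gchoosecor} to an edge; the essential difference is that Corollary~\ref{hchoosecor} asks for $f_u$-choosability rather than plain $f$-choosability, and this deletion of one color from one list is exactly what forces the extra slack appearing in the two hypotheses. Since $h_k(H')\leq j$ is automatic, it suffices to name a vertex $w\in V(H'')=\{u,v\}$ for which the edge $H''=uv$ is $f_w$-choosable; Corollary~\ref{hchoosecor} then yields $h_k(H)\leq j=h_k(H')$. I would always take $w=u$, the endpoint of smaller degree (recall $d_H(v)\geq d_H(u)$), so that the deleted color is removed from the \emph{longer} list. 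The only choosability fact needed is elementary: an edge with list sizes $s$ and $t$ is list-colorable for every assignment if and only if $\min\{s,t\}\geq 1$ and $\max\{s,t\}\geq 2$, the sole obstruction being two equal singleton lists. Thus I must verify that $f_u(u)=f(u)-1$ and $f_u(v)=f(v)$ meet these two inequalities.

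First I would do the degree bookkeeping. As $H''$ is the single edge $uv$ and $H'=H-\{u,v\}$, we have $d'(u)=d_H(u)-1$ and $d'(v)=d_H(v)-1$, so
\begin{align*}
f(u)&=k-d_H(u)+1-\min\{d_H(u)-1,j\},\\
f(v)&=k-d_H(v)+1-\min\{d_H(v)-1,j\}.
\end{align*}
Since $d_H(u)\leq d_H(v)$, the $\min$ terms only reinforce the inequality coming from the degrees, giving $f(u)\geq f(v)$ and, more precisely, $f(u)-f(v)\geq d_H(v)-d_H(u)$. Now split into the two hypothesized cases. In the first, $k\geq d_H(v)+\min\{d_H(v)-1,j\}+1$ forces $f(v)\geq 2$, whence $f(u)\geq f(v)\geq 2$; then $f_u(u)=f(u)-1\geq 1$ and $f_u(v)=f(v)\geq 2$, so the criterion holds. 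In the second, $k\geq d_H(v)+\min\{d_H(v)-1,j\}$ forces $f(v)\geq 1$, while $d_H(v)\geq d_H(u)+2$ gives $f(u)\geq f(v)+2\geq 3$; then $f_u(u)=f(u)-1\geq 2$ and $f_u(v)=f(v)\geq 1$, so the criterion again holds. In either case $H''$ is $f_u$-choosable and Corollary~\ref{hchoosecor} completes the proof.

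The main obstacle---and the reason the statement splits into two asymmetric cases rather than the single ``at least one strict inequality'' condition of the $g_k$ version---is choosing which endpoint to distinguish. Deleting a color shortens one list, so the naive move can backfire: had we distinguished $v$ when $f(v)=1$, its list would become empty and the edge would not be $f_v$-choosable. This is why $u$, the endpoint with the larger value $f(u)$, must be the distinguished vertex, and why the hypotheses are tuned to guarantee that a list of size at least $2$ survives the deletion---supplied by the spare color in the first case (so $f(v)\geq 2$) or by the degree gap in the second (so $f(u)\geq 3$). The configuration the two cases jointly omit, $d_H(v)=d_H(u)+1$ together with $k=d_H(v)+\min\{d_H(v)-1,j\}$, is precisely where this can degenerate to $f(u)=2$ and $f(v)=1$, collapsing both reduced lists to singletons and defeating $f_w$-choosability for either choice of $w$.
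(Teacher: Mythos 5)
Your proof is correct and follows essentially the same route as the paper's: set $j=h_k(H')$, distinguish the lower-degree endpoint $u$, and verify that the edge is $f_u$-choosable by checking $\{f_u(u),f_u(v)\}$ has minimum at least $1$ and maximum at least $2$ in each of the two cases (the paper does the identical computation, merely parametrized by $\alpha=d_H(v)-d_H(u)$ and $\beta=k-d_H(v)-\min\{d_H(v)-1,h_k(H')\}$).
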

\begin{proof}
Set $j=h_k(H')$ in Corollary \ref{hchoosecor}: letting $d_H(v)=d_H(u)+\alpha$ and $k=d_{H}(v)+\min\{d_{H}(v)-1,h_k(H')\}+\beta$ (so $\beta\geq 1$ or $\alpha\geq 2$), we have 
\begin{align*}
f_u(u) &=k-d'(u)-1-\min\{d'(u),j\} \\
&=k-d_{H}(u)-\min\{d_{H}(u)-1,h_k(H')\} \\
&\geq k-d_H(v)+\alpha-\min\{d_H(v)-1,h_k(H')\} \\
&=\alpha+\beta
\end{align*}  
and
\begin{align*}
f_u(v) &=k-d'(v)-\min\{d'(v),j\} \\
&=k-d_{H}(v)+1-\min\{d_{H}(v)-1,h_k(H')\} \\
&=\beta+1.
\end{align*}
If $\beta\geq 1$, then $f_u(u)\geq 1$ and $f_u(v)\geq 2$, and if $\alpha\geq 2$, then $f_u(u)\geq 2$ and $f_u(v)\geq 1$.  Hence $H''$ is $f_u$-choosable, so $h_k(H)\leq j=h_k(H')$.
\end{proof}
We note that Corollaries \ref{gvcor} and \ref{hvcor} can be used to recover the results in \cite{CHJ} and \cite{CM} that respectively state $g_k(H)=1$ if $H$ is $(k-2)$-degenerate, and $h_k(H)=1$ if $H$ is $(k-3)$-degenerate.  Indeed, order $V(H)$ as $v_1,\ldots,v_n$, where $v_n$ is a vertex of minimum degree in $H$, and for each $i\in [n-1]$, $v_i$ is a vertex of minimum degree in the induced subgraph $H_i$ of $H$ defined by $H_i=H-\{v_{i+1},\ldots,v_n\}$.  Setting $H_n=H$, we have $d_{H_i}(v_i)\leq d$ for $i\in[n]$ if $H$ is $d$-degenerate.  If $k=d+2$, then clearly $g_k(H_1)=1$ ($G^1_k(H_1)$ is a complete graph on $k$ vertices), and if $g_k(H_{i-1})=1$, then we get $g_k(H_i)=1$ by Corollary \ref{gvcor}, since $k=d+2>d_{H_i}(v_i)+1= d_{H_i}(v_i)+\min\{d_{H_i}(v_i),g_k(H_{i-1})\}$.  If $k=d+3$, then clearly $h_k(H_1)=1$ ($G^1_k(H_1)$ is a complete graph on $k$ vertices for some $k\geq 3$), and if $h_k(H_{i-1})=1$, then we get $h_k(H_i)=1$ by Corollary \ref{hvcor}, since $k=d+3\geq 2+d_{H_i}(v_i)+1= 2+d_{H_i}(v_i)+\min\{d_{H_i}(v_i),h_k(H_{i-1})\}$.

To conclude this section, we prove Proposition \ref{treecyclecol} concerning the computations $g_3(K_{1,2m})$, $h_3(K_{1,2m})$, $g_3(C_n)$, and $h_3(C_n)$.  

Setting $H=K_{1,2m}$ and $H''=v$ for some leaf $v$ of $H$, we have $d_H(v)=1$ and $H'=K_{1,2m-1}$.  Hence $g_3(H')=h_3(H')=1$, so $g_3(K_{1,2m})=1$, by Corollary \ref{gvcor}, and $h_3(K_{1,2m})=2$, by Corollary \ref{hvprop} (and the fact that $h_3(K_{1,2m})>1$).  

Setting $H=C_n$ for $n\neq 4$ and $H''=v$ for any vertex $v$ of $H$, we have $d_H(v)=2$ and $H'=P_{n-1}$.  Hence $h_3(H')=1$ since $n\neq 4$, so $h_3(C_n)=2$, by Corollary \ref{hvprop} (and the fact that $h_3(C_n)>1$).

Finally, we confirm $h_3(C_4)=2$ by exhibiting the following Hamiltonian cycle through $G^2_3(C_4)$: 1312, 1212, 1232, 1213, 1313, 1323, 2123, 2323, 2313, 2321, 2121, 2131, 3231, 3131, 3121, 3132, 3232, 3212.

\section{Subdividing Edges}
\label{sec:sub}
In this section, we prove Theorem \ref{multi}, concerning a graph $H$ obtained from a multigraph $M$ by subdividing each edge of $M$ at least $\ell$ times for some $\ell\geq 1$ (different edges need not receive the same number of subdivisions).  Note that $\chi(H)\leq 3$: the vertices of $H$ that originated in $M$ form an independent set in $H$ and thus can each be given color $1$, and a proper $3$-coloring of $H$ can be completed by coloring the remaining vertices from $\{2,3\}$ since each component of $H-V(M)$ is a path.  

For an induced subgraph $H'$ of a graph $H$, write $H'\subset^{x,y}_{\ell} H$ if $H-V(H')$ consists of a path $v_1,\ldots,v_{\ell}$ such that $d_H(v_j)=2$ for $j\in[\ell]$, with $v_1$ adjacent to $x\in V(H')$ and $v_{\ell}$ adjacent to $y\in V(H')$ (potentially $x=y$).  See Figure \ref{g3fig}.  Note that if $k\geq 3$, $\ell\geq 1$, and $H'\subset^{x,y}_{\ell} H$, then every proper $k$-coloring of $H'$ can be extended to a proper $k$-coloring of $H$.

Obtain a subforest $F$ of $H$ by deleting $\ell$ consecutive vertices from each subdivision of an edge in $M$.  Note that each component of $H-V(F)$ is a path on $\ell$ vertices $v_1,\ldots,v_\ell$ such that $d_H(v_j)=2$ for $j\in[\ell]$.  By adding these components of $H-V(F)$ back to $F$ one at a time, we get the following observation.
\begin{obs}\label{subobs}
If a graph $H$ is obtained from a multigraph $M$ by subdividing each edge of $M$ at least $\ell$ times, then there exists a sequence $H_1,\ldots,H_m$ of subgraphs of $H$ such that $H_1$ is a forest, $H_m=H$, and for $i\in[m-1]$ there exist $x_i,y_i\in V(H_i)$ such that $H_i\subset^{x_i,y_i}_{\ell}H_{i+1}$ and either $x_i=y_i$ or $d_{H_i}(x_i,y_i)>\ell$ (always the case if $M$ is loopless).
\end{obs}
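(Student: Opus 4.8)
The plan is to make precise the sketch that immediately precedes the statement: build the forest $F=H_1$ explicitly, reintroduce the deleted $\ell$-vertex blocks one at a time, and verify for each reinsertion that the two attachment points are either equal or far apart. For each edge $e$ of $M$, with subdivision path $u,w_1,\ldots,w_{s_e},v$ where $s_e\geq\ell$, I would fix a block of $\ell$ consecutive interior vertices to delete and set $F$ to be $H$ minus all of these blocks. Since every subdivided edge then loses $\ell$ interior vertices, no path of $F$ joins two distinct branch vertices of $M$ and no subdivided loop survives as a cycle, so each component of $F$ is a branch vertex together with the pendant partial paths hanging from it; hence $F$ is a forest. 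Reintroducing the blocks in any order produces the chain $H_1,\ldots,H_m=H$, where the block restored at step $i$ is a path on $\ell$ vertices of degree $2$ in $H$ whose two ends are adjacent to the vertices $x_i,y_i$ of $H_i$ flanking the deleted block. This is exactly the relation $H_i\subset^{x_i,y_i}_{\ell}H_{i+1}$.

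The substantive step is verifying for each $i$ that $x_i=y_i$ or $d_{H_i}(x_i,y_i)>\ell$. Suppose the block restored at step $i$ comes from a non-loop edge $e=uv$, so that $x_i$ lies on the pendant stub attached to $u$, $y_i$ on the stub attached to $v$, and $x_i\neq y_i$. If $u$ and $v$ lie in different components of $H_i$ then $d_{H_i}(x_i,y_i)=\infty$ and we are done, so assume they are connected. The two stubs are pendant paths, so every $x_i$--$y_i$ walk in $H_i$ must run through $u$ and through $v$; writing $a-1$ and $s_e-a-\ell+1$ for the two stub lengths gives $d_{H_i}(x_i,y_i)=d_{H_i}(u,v)+(s_e-\ell)\geq d_{H_i}(u,v)$. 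The key point is that any $u$--$v$ path in $H_i$ meets the branch vertices of $M$ in a sequence joined by fully present subdivided edges, the first of which already contributes at least $\ell+1$ to the length; hence $d_{H_i}(u,v)\geq\ell+1$ and so $d_{H_i}(x_i,y_i)>\ell$. Because this bound holds in every $H_i$ irrespective of the order of insertion, no ordering argument is needed when $M$ is loopless, which is the parenthetical assertion in the statement.

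I expect the loop case to be the main obstacle, and it is precisely why the disjunction in the statement includes $x_i=y_i$. For a loop at $v$ subdivided $s_e$ times, both stubs attach to the single vertex $v$, so $x_i$ and $y_i$ can never be separated into different components, and the computation above yields only $d_{H_i}(x_i,y_i)=s_e-\ell$. When $s_e=\ell$ both stubs are empty and $x_i=y_i=v$, and when $s_e>2\ell$ we get $d_{H_i}(x_i,y_i)=s_e-\ell>\ell$; the genuine difficulty is the narrow range $\ell<s_e\leq 2\ell$, where neither branch of the disjunction follows from the naive single-block deletion. This is the step I would treat most carefully: one must argue that, by the choice of which $\ell$ consecutive vertices to remove from each loop (and, if necessary, by the order in which blocks are reinserted), every loop reinsertion can be made to land in the $x_i=y_i$ branch, while preserving both the forest property of $F$ and the distance bound for all non-loop reinsertions. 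Once the loop reinsertions are handled, the non-loop analysis of the previous paragraph disposes of every remaining step and completes the chain $H_1,\ldots,H_m$.
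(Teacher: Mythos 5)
Your construction is the same one the paper uses: the paper's entire ``proof'' of this observation is the two sentences preceding it (delete $\ell$ consecutive interior vertices from each subdivided edge to obtain a forest $F=H_1$, then restore the deleted blocks one at a time), and your loopless analysis supplies exactly the details that sketch omits. In particular, the identity $d_{H_i}(x_i,y_i)=d_{H_i}(u,v)+(s_e-\ell)$ combined with the fact that any $u$--$v$ path in $H_i$ must fully traverse at least one intact subdivided edge (internal vertices have degree $2$, so a simple path cannot exit a subdivided edge midway), hence has length at least $\ell+1$, is a complete and order-independent verification of the distance condition for every non-loop reinsertion. That part of your write-up is correct and is strictly more than the paper provides.

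The loop case you flag is a genuine gap, but not one you can close by a cleverer choice of deleted block or insertion order: for a loop at $v$ subdivided $s_e$ times with $\ell<s_e\leq 2\ell$, every admissible deletion leaves $x_i\neq y_i$ joined through $v$ by two pendant stubs of total length $s_e-\ell\leq\ell$, so neither disjunct holds, and the statement itself fails in this regime. Concretely, take $M$ to be a single vertex with one loop subdivided $4$ times and $\ell=3$, so $H=C_5$: counting vertices forces $m=2$ and $H_1=P_2$, whose endpoints are distinct and at distance $1\leq\ell$. So your instinct that ``neither branch of the disjunction follows'' in the window $\ell<s_e\leq 2\ell$ is right, and the hoped-for repair in your last paragraph does not exist; the observation needs either to exclude loops subdivided between $\ell+1$ and $2\ell$ times or to weaken the conclusion for loop blocks. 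The paper's own sketch silently passes over exactly this point (its downstream use in the loopless corollaries, such as Corollary \ref{g3subcor}, is unaffected, since there your argument proves everything needed).
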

\begin{figure}[h]
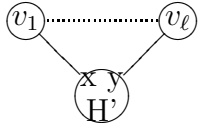

\centering
\[
\xygraph{ !{<0cm,0cm>;<1cm,0cm>:<0cm,1cm>::}
!{(0,0) }*+[o][F-:<3pt>]\txt{x y\\ H'}="a"
!{(-1,1) }*+[o][F-:<3pt>]{v_1}="b"
!{(1,1) }*+[o][F-:<3pt>]{v_{\ell}}="c"
"a"-"b"-@{.}"c"-"a"
}
\]
\caption{Graphs $H$ and $H'$ such that $H'\subset^{x,y}_{\ell} H$.}
\label{g3fig}
\end{figure}
\begin{prop}\label{g3subdivide}
Let $H'$ be a $3$-colorable subgraph of a graph $H$ such that $H-V(H')$ consists of an edge $uv$, with $u$ having a single neighbor $x\in V(H')$ and $v$ having a single neighbor $y\in V(H')-N[x]$.  If $g_3(H')\leq 2$, then $g_3(H)\leq 2$.
\end{prop}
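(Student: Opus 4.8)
The plan is to deduce this from Proposition \ref{gchooseprop} with $j=2$, so first I would record the relevant quantities for the added edge $H''=uv$. Because $u$ has the single neighbor $x$ in $H'$ and $v$ has the single neighbor $y$ in $H'$, we have $d'(u)=d'(v)=1$; moreover, for any subgraph $F$ of $H'$, since $N_H(u)\cap V(H')=\{x\}$ and $N_H(v)\cap V(H')=\{y\}$, the quantities $d^F(u)$ and $d^F(v)$ are each $1$ or $0$ according to whether $F$ contains $x$ or $y$, respectively. Hence $f^F(u)=k-d'(u)-d^F(u)=2-d^F(u)$ and $f^F(v)=2-d^F(v)$ for every such $F$ when $k=3$.

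The key observation is that the hypothesis $y\in V(H')-N[x]$ forces $x$ and $y$ to be distinct and nonadjacent, so no connected subgraph $F$ of $H'$ on at most two vertices can contain both $x$ and $y$: such an $F$ is either a single vertex (which cannot contain two vertices) or a single edge (whose endpoints would then be adjacent, contradicting $y\notin N[x]$). I would use this to conclude that for every connected subgraph $F$ of $H'$ on at most two vertices, at least one of $d^F(u),d^F(v)$ equals $0$, so the pair $\bigl(f^F(u),f^F(v)\bigr)$ is one of $(2,2)$, $(2,1)$, or $(1,2)$. In each case both list sizes are at least $1$ and sum to at least $3$, so the edge $H''=uv$ is $f^F$-choosable (a single edge $ab$ fails to be list-colorable only when both its lists have size $1$ and coincide, which never occurs here). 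Since $g_3(H')\leq 2$, the connected subgraphs $F$ that arise in Proposition \ref{gchooseprop} have at most two vertices, so the choosability condition holds for all of them, and applying the proposition with $j=2$ gives $g_3(H)\leq 2$.

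The only delicate point, and the step I expect to be the crux, is the observation that no connected subgraph of $H'$ on at most two vertices meets both $x$ and $y$; this is precisely where the strengthened requirement $y\notin N[x]$ (rather than merely $y\neq x$) is needed, since it is exactly what rules out the troublesome case $f^F(u)=f^F(v)=1$ in which an edge need not be choosable. It is also the reason the general edge corollary cannot simply be invoked: with $k=3$ and $d_H(u)=d_H(v)=2$ one has both $d_H(v)=d_H(u)$ and $k=d_H(v)+\min\{d_H(v)-1,g_3(H')\}=3$, so neither inequality is strict and the corollary's hypothesis fails, making this borderline case require the separate choosability argument above.
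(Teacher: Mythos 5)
Your proof is correct and follows essentially the same route as the paper: the paper likewise sets $H''=uv$, $j=2$, $k=3$ in Proposition \ref{gchooseprop} and notes that since $y\notin N[x]$ no connected $F$ on at most two vertices contains both $x$ and $y$, so $\bigl(f^F(u),f^F(v)\bigr)$ always has one entry at least $2$ and the other at least $1$, making the edge $f^F$-choosable. Your additional remarks (the explicit criterion for when an edge fails to be list-colorable, and why the general edge corollary cannot be invoked in this borderline case) are accurate but not needed beyond what the paper records.
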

\begin{proof}
Set $H''$ as the edge $uv$, $j=2$, and $k=3$ in Proposition \ref{gchooseprop} (if $F$ is a connected subgraph of $H'$ on at most $2$ vertices, then $F$ does not contain both $x$ and $y$ since $y\notin N[x]$, so either $f^F(u)\geq 1$ and $f^F(v)\geq 2$ or $f^F(u)\geq 2$ and $f^F(v)\geq 1$; either way $H''$ is $f^F$-choosable).
\end{proof}
\begin{cor}\label{g3subcor}
If $H$ is obtained from a loopless multigraph $M$ by subdividing each edge of $M$ at least twice, then $g_3(H)\leq 2$.
\end{cor}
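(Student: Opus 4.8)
The plan is to induct along the chain of subgraphs supplied by Observation \ref{subobs}, using Proposition \ref{g3subdivide} to pass from each term to the next. Applying Observation \ref{subobs} with $\ell=2$ produces subgraphs $H_1,\ldots,H_m$ of $H$ with $H_1$ a forest, $H_m=H$, and, for each $i\in[m-1]$, vertices $x_i,y_i\in V(H_i)$ such that $H_i\subset^{x_i,y_i}_2 H_{i+1}$. Because $M$ is loopless, the observation further guarantees $d_{H_i}(x_i,y_i)>2$ for every $i$. Note also that every $H_i$ is a subgraph of $H$ and hence $3$-colorable, since $\chi(H)\leq 3$; this lets us feed each $H_i$ into Proposition \ref{g3subdivide} as the graph ``$H'$''.

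For the base of the induction I would observe that $H_1$ is a forest, hence $1$-degenerate, so the degeneracy result of \cite{CHJ} (namely $g_k=1$ for $(k-2)$-degenerate graphs, taken at $k=3$) gives $g_3(H_1)=1\leq 2$. For the inductive step, assume $g_3(H_i)\leq 2$ and set $H'=H_i$ and $H=H_{i+1}$ in Proposition \ref{g3subdivide}. I would verify its hypotheses as follows. Since $\ell=2$, the graph $H_{i+1}-V(H_i)$ is a path on exactly two vertices $v_1,v_2$, i.e.\ a single edge $uv$ with $u=v_1$ and $v=v_2$; and because $d_{H_{i+1}}(v_1)=d_{H_{i+1}}(v_2)=2$, the only neighbor of $u$ in $H_i$ is $x:=x_i$ and the only neighbor of $v$ in $H_i$ is $y:=y_i$. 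The condition $d_{H_i}(x_i,y_i)>2$ forces both $y_i\neq x_i$ and $y_i\notin N_{H_i}(x_i)$, so $y\in V(H_i)-N[x]$, exactly as required. Proposition \ref{g3subdivide} then yields $g_3(H_{i+1})\leq 2$, and the induction delivers $g_3(H)=g_3(H_m)\leq 2$.

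I expect no serious obstacle here, as Observation \ref{subobs} and Proposition \ref{g3subdivide} are engineered to dovetail; the work is essentially bookkeeping. The one point meriting care is translating the distance hypothesis $d_{H_i}(x_i,y_i)>2$ (which is precisely what looplessness buys us) into the closed-neighborhood condition $y\notin N[x]$ that Proposition \ref{g3subdivide} demands. This is exactly why the $\ell=2$ and loopless hypotheses occur together: taking $\ell=2$ is what makes each newly added piece $H_{i+1}-V(H_i)$ a single edge rather than a single vertex (so that the edge-based Proposition \ref{g3subdivide} applies at all), while looplessness is what separates $x_i$ and $y_i$ enough to meet the non-adjacency requirement.
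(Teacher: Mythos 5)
Your proof is correct and follows exactly the paper's own argument: induct along the chain $H_1,\ldots,H_m$ from Observation \ref{subobs} with $\ell=2$, use the forest $H_1$ as the base case, and apply Proposition \ref{g3subdivide} at each step, with looplessness supplying $d_{H_i}(x_i,y_i)\geq 3$ and hence the condition $y_i\notin N[x_i]$. The extra bookkeeping you include (identifying the added path with the edge $uv$ and checking the single-neighbor conditions) is implicit in the paper but entirely consistent with it.
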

\begin{proof}
Let $H_1,\ldots,H_m$ be a sequence of subgraphs of $H$ described in Observation \ref{subobs}; since $M$ is loopless, for each $i\in[m-1]$ we have $H_i\subset^{x_i,y_i}_{\ell}H_{i+1}$ where $d_{H_i}(x_i,y_i)\geq 3$.  We have $g_3(H_1)=1$ since $H_1$ is a forest, and for $i\in [m-1]$, if $g_3(H_i)\leq 2$, then $g_3(H_{i+1})\leq 2$, by Proposition \ref{g3subdivide}.  Hence $g_3(H)\leq 2$.
\end{proof}
We note that the condition that $M$ be loopless is necessary for Corollary \ref{g3subcor} to hold.  Indeed, suppose $M$ has a vertex $x$ with loops $L_1,\ldots,L_j$ that are subdivided exactly twice in forming $H$, with new vertices $u_i$ and $v_i$ in $L_i$ for $i\in[j]$.  If $\phi$ and $\phi'$ are proper $3$-colorings of $H$ such that $\phi(x)\neq \phi'(x)$, then $\phi$ and $\phi'$ lie in different components of $G^j_3(H)$: for each $i\in[j]$, $u_i$ and $v_i$ are neighbors of $x$, and $\{\phi(u_i),\phi(v_i)\}=[3]-\{\phi(x)\}$ since $\phi$ is proper, so $x$ cannot be recolored without also recoloring one of the new vertices from each of $L_1,\ldots,L_j$.

Let $H'$ be a $4$-colorable subgraph of a graph $H$ such that $H-V(H')$ consists of an edge $uv$, with $u$ having a single neighbor $x\in V(H')$ and $v$ having a single neighbor $y\in V(H')-\{x\}$.  For proper $4$-colorings $\psi^1$ and $\psi^2$ of $H'$ satisfying $\psi^1(x)=\psi^2(x)=1$ and $\psi^i(y)=i$, Figure \ref{adjexthfourfig} shows each subgraph $F^i$ of $G^1_4(H)$ induced by the set of proper $4$-colorings $\psi^i_1,\psi^i_2,\ldots$ of $H$ that agree with $\psi^i$ on $H'$, with node $\psi^i_{\ell}$ of $F^i$ labeled $\psi^i_{\ell}(x)\psi^i_{\ell}(u)\psi^i_{\ell}(v)\psi^i_{\ell}(y)$.  Note that if $\pi$ is one of the vertices of $F^2$ labeled $1212$, $1342$, or $1432$, and $\alpha$ is any vertex of $F^2$ besides $\pi$, then there is a Hamiltonian path through $F^2$ whose endpoints are $\pi$ and $\alpha$.  If instead $\pi$ is in $\{1232,1412\}$ but $\alpha$ is not, or $\pi$ is in $\{1242,1312\}$ but $\alpha$ is not, then again there is a Hamiltonian path through $F^2$ whose endpoints are $\pi$ and $\alpha$.
\begin{figure}[h]
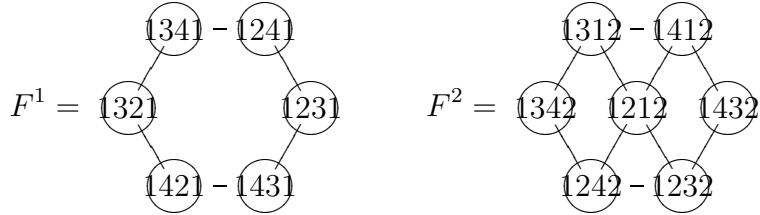

\centering
\[
F^1=
\xygraph{ !{<0cm,0cm>;<1cm,0cm>:<0cm,1cm>::}
!{(0,0);a(0)**{}?(1.2)}*+[o][F-:<3pt>]{1231}="a"
!{(0,0);a(60)**{}?(1.2)}*+[o][F-:<3pt>]{1241}="b"
!{(0,0);a(120)**{}?(1.2)}*+[o][F-:<3pt>]{1341}="c"
!{(0,0);a(180)**{}?(1.2)}*+[o][F-:<3pt>]{1321}="d"
!{(0,0);a(240)**{}?(1.2)}*+[o][F-:<3pt>]{1421}="e"
!{(0,0);a(300)**{}?(1.2)}*+[o][F-:<3pt>]{1431}="f"
"a"-"b"-"c"-"d"-"e"-"f"-"a"
}
\hspace{1cm}
F^2=
\xygraph{ !{<0cm,0cm>;<1cm,0cm>:<0cm,1cm>::}
!{(0,0);a(0)**{}?(0.0)}*+[o][F-:<3pt>]{1212}="a"
!{(0,0);a(120)**{}?(1.2)}*+[o][F-:<3pt>]{1312}="b"
!{(0,0);a(180)**{}?(1.2)}*+[o][F-:<3pt>]{1342}="c"
!{(0,0);a(240)**{}?(1.2)}*+[o][F-:<3pt>]{1242}="d"
!{(0,0);a(300)**{}?(1.2)}*+[o][F-:<3pt>]{1232}="e"
!{(0,0);a(0)**{}?(1.2)}*+[o][F-:<3pt>]{1432}="f"
!{(0,0);a(60)**{}?(1.2)}*+[o][F-:<3pt>]{1412}="g"
"g"-"a"-"b"-"c"-"d"-"e"-"f"-"g"-"b"
"d"-"a"-"e"
}\]
\caption{Two induced subgraphs of $G^1_4(H)$.}
\label{adjexthfourfig}
\end{figure}
\begin{lem}\label{adjexth4}
Let $H'$ be a $4$-colorable subgraph of a graph $H$ such that $H-V(H')$ consists of an edge $uv$, with $u$ having a single neighbor $x\in V(H')$ and $v$ having a single neighbor $y\in V(H')-\{x\}$, and let $\phi$ and $\phi'$ be proper $4$-colorings of $H'$ adjacent in $G^1_4(H')$.  Letting $G$ denote the subgraph of $G^1_4(H)$ induced by the extensions of $\phi$, for every $\pi\in V(G)$ there exists $\rho\in V(G)-\{\pi\}$ such that there is a Hamiltonian path through $G$ from $\pi$ to $\rho$, and $\rho$ is adjacent in $G^1_4(H)$ to some extension of $\phi'$.
\end{lem}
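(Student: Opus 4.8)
The plan is to reduce the lemma to the explicit Hamiltonian-path facts recorded just before its statement. First I would pin down $G$ exactly. Since $\phi$ is fixed on $H'$ and the only vertices of $H'$ adjacent to $\{u,v\}$ are $x$ and $y$, an extension $\rho$ of $\phi$ is determined by a proper coloring of the edge $uv$ satisfying $\rho(u)\neq\phi(x)$, $\rho(v)\neq\phi(y)$, and $\rho(u)\neq\rho(v)$, and two extensions are adjacent in $G$ precisely when they differ on exactly one of $u,v$. Counting these colorings shows that, after relabeling colors, $G$ is the $6$-cycle $F^1$ of Figure~\ref{adjexthfourfig} when $\phi(x)=\phi(y)$, and the seven-vertex graph $F^2$ of Figure~\ref{adjexthfourfig} when $\phi(x)\neq\phi(y)$.

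Next I would convert the required adjacency into a coloring constraint on $\rho$. Because $\phi$ and $\phi'$ differ on a single vertex $w\in V(H')$, an extension $\rho$ of $\phi$ is adjacent in $G^1_4(H)$ to an extension of $\phi'$ if and only if the colors $\rho$ assigns to $u$ and $v$ also extend $\phi'$ (the two colorings then differ only on $w$). As $u$ meets $H'$ only at $x$ and $v$ only at $y$, this holds exactly when $\rho(u)\neq\phi'(x)$ and $\rho(v)\neq\phi'(y)$. Comparing with the constraints $\rho$ already satisfies yields three cases: if $w\notin\{x,y\}$ the condition holds for every $\rho$; if $w=x$ it reduces to $\rho(u)\neq\phi'(x)$; and if $w=y$ it reduces to $\rho(v)\neq\phi'(y)$. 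Call a vertex of $G$ \emph{good} if it meets the relevant condition and \emph{bad} otherwise. In the first case every vertex is good; in the latter two the bad vertices are those using one forbidden color on $u$ (respectively $v$), of which there are at most three in $F^2$ and exactly two in $F^1$, so there are always at least four good vertices.

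With these reductions the rest is a short case check. For $F^2$ I would invoke the stated facts: from any $\pi\in\{1212,1342,1432\}$ there is a Hamiltonian path to every other vertex, while from $\pi\in\{1232,1412\}$ (resp.\ $\pi\in\{1242,1312\}$) there is a Hamiltonian path to every vertex outside that pair. Hence the set of vertices \emph{unreachable} as an endpoint from $\pi$ has size at most two and always contains $\pi$ itself; since there are at least four good vertices and $4>2$, some good vertex is reachable and distinct from $\pi$, giving the desired $\rho$. For $F^1$ I would use that the only Hamiltonian paths of a $6$-cycle arise by deleting a single edge, so the reachable endpoints from $\pi$ are exactly its two neighbors. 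Here the two bad vertices share a forbidden color and therefore differ on only one of $u,v$, so they form an edge; since $C_6$ is triangle-free the two neighbors of $\pi$ are nonadjacent and hence cannot both be bad, so a good neighbor (and thus a good reachable $\rho\neq\pi$) always exists.

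The main obstacle I anticipate is the bookkeeping in the $F^2$ case, specifically the "paired" sources $\{1232,1412\}$ and $\{1242,1312\}$, from which a Hamiltonian path cannot reach the partner vertex: one must confirm that a good target still survives outside the excluded pair for each admissible forbidden color. Rather than enumerate all color choices, the clean way to dispatch this is the crude size bound—at least four good vertices against at most two unreachable ones—which guarantees the bad set can never swallow every reachable good target.
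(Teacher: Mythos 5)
Your proof is correct and follows essentially the same route as the paper: reduce $G$ to $F^1$ or $F^2$, translate adjacency to an extension of $\phi'$ into the condition $\rho(u)\neq\phi'(x)$ and $\rho(v)\neq\phi'(y)$, and finish with the Hamiltonian-path facts recorded just before the lemma. Your only departure is cosmetic: where the paper exhibits explicit good target vertices in each color subcase, you close with a counting argument (at least four good vertices versus at most two endpoints not guaranteed reachable, plus the observation that the two bad vertices of the $6$-cycle form an edge while the two neighbors of $\pi$ do not), which is a slightly cleaner way to finish the same case analysis.
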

\begin{proof}
Since $\phi$ and $\phi'$ are adjacent in $G^1_4(H')$, they differ on exactly one vertex $w$ of $H'$.  Thus we may assume without loss of generality that $\phi(x)=\phi'(x)=1$.  Let $\pi\in V(G)$; we find $\rho\in V(G)-\{\pi\}$ such that there is a Hamiltonian path through $G$ from $\pi$ to $\rho$, with $\rho(u)\neq \phi'(x)$ and $\rho(v)\neq \phi'(y)$ (allowing $\phi'$ to be extended to some proper $k$-coloring $\rho'$ of $H$ by coloring $uv$ like $\rho$ does, so $\rho$ and $\rho'$ will be adjacent in $G^1_4(H)$ since they only differ on $w$).

First suppose $\phi(y)=1$, in which case $G$ looks like $F^1$ from Figure \ref{adjexthfourfig}.  Either $\phi'(y)=1$ or $\phi'(y)\neq 1$, in which case without loss of generality assume $\phi'(y)=2$.  In either case, there are extensions of both $\phi$ and $\phi'$ to $H$ that label $uv$ as $43$, $23$, $24$, and $34$, with every vertex in $G$ adjacent to at least one of these extensions.  Thus no matter whether $\phi'(y)=1$ or $\phi'(y)=2$, we can let $\rho$ be a neighbor of $\pi$ that labels $uv$ as either $43$, $23$, $24$, or $34$ ($\rho$ ends the Hamiltonian path through $G$ that starts at $\pi$ and moves in the opposite direction from $\rho$).

Now suppose $\phi(y)\neq 1$; without loss of generality assume $\phi(y)=2$, in which case $G$ looks like $F^2$ from Figure \ref{adjexthfourfig}.  If $\phi'(y)\in[2]$, then there are extensions of both $\phi$ and $\phi'$ that label $uv$ as $43$ and $34$; for each $\pi\in V(G)$ there is a Hamiltonian path through $G$ from $\pi$ to at least one of these vertices, which we set as $\rho$.  If $\phi'(y)\notin[2]$, then we assume without loss of generality that $\phi'(y)=3$, in which case there are extensions of both $\phi$ and $\phi'$ that label $uv$ as $24$ and $41$; for each $\pi\in V(G)$ there is a Hamiltonian path through $G$ from $\pi$ to at least one of these vertices, which we set as $\rho$.
\end{proof}
\begin{prop}\label{h4subdivide}
Let $H'$ be a $4$-colorable subgraph of a graph $H$ such that $H-V(H')$ consists of an edge $uv$, with $u$ having a single neighbor $x\in V(H')$ and $v$ having a single neighbor $y\in V(H')-\{x\}$.  If $h_4(H')=1$, then $h_4(H)=1$.
\end{prop}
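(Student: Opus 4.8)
The plan is to follow the block-traversal strategy of Propositions~\ref{hvpropgen} and~\ref{hchooseprop}, but confined to $G^1_4(H)$ so as to obtain $h_4(H)=1$ rather than merely a bound. Since $h_4(H')=1$, fix a Hamiltonian cycle $C'=[\phi^1,\ldots,\phi^b]$ in $G^1_4(H')$, so consecutive colorings differ on exactly one vertex (indices taken mod $b$). For each $i$ let $G_i$ be the subgraph of $G^1_4(H)$ induced by the extensions of $\phi^i$ to $H$. Every proper $4$-coloring of $H'$ extends to $H$, since one may color $u$ with any color avoiding $\phi^i(x)$ and then $v$ with any color avoiding $\phi^i(y)$ and the color of $u$; moreover $G_i$ is isomorphic to $F^1$ when $\phi^i(x)=\phi^i(y)$ and to $F^2$ otherwise, the graphs of Figure~\ref{adjexthfourfig}. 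The goal is to build a Hamiltonian cycle of $G^1_4(H)$ that traverses each $G_i$ along a Hamiltonian path and joins consecutive blocks by a single edge.

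First I would record the two kinds of edges of $G^1_4(H)$. Two extensions, one of $\phi^i$ and one of $\phi^{i+1}$, are adjacent exactly when they give $u$ and $v$ the same colors and those colors are proper for both; I will call such a coloring of $uv$ a \emph{crossing label}. This is precisely the content of Lemma~\ref{adjexth4}: for any chosen entry vertex $\pi$ of $G_i$ it produces an exit vertex $\rho$, a Hamiltonian path through $G_i$ from $\pi$ to $\rho$, and an extension of $\phi^{i+1}$ adjacent to $\rho$, which I take as the entry vertex of $G_{i+1}$. Applying the lemma from an entry $\alpha^1$ of $G_1$ across the transitions $1\to 2,\ldots,(b-1)\to b$ yields entry/exit pairs $\alpha^i,\beta^i$ and a single Hamiltonian path of $G^1_4(H)$ visiting the blocks in the order $G_1,\ldots,G_b$.

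The main obstacle, exactly as flagged in the introduction, is closing this path into a cycle: the final exit $\beta^b$ must be adjacent to the initial entry $\alpha^1$, that is, they must carry the same crossing label for the transition $b\to 1$. Once $\alpha^1$ is fixed the forward propagation is forced (the extension of $\phi^{i+1}$ adjacent to $\beta^i$ is the unique one with the same $uv$-label), so this is a genuine consistency condition around the whole cycle. I would handle it through a dichotomy. If $x$ and $y$ have constant color across all proper $4$-colorings of $H'$, then no transition alters the constraints on $uv$, every block is isomorphic to a single graph $B\in\{F^1,F^2\}$, and $G^1_4(H)\cong G^1_4(H')\,\square\,B$; since $G^1_4(H')$ is Hamiltonian and $B$ is connected of bounded degree, the Cartesian-product Hamiltonicity criterion cited in Section~\ref{sec:grayintro} applies (the few small cases being checked directly).

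Otherwise some color, say that of $x$, is not constant; then, because $C'$ visits every proper $4$-coloring in cyclic order and consecutive colorings differ on a single vertex, some transition of $C'$ differs exactly on $x$, and after rotating $C'$ I may assume it is the transition $b\to 1$. At such a transition the crossing labels are constrained (the colors $\phi^b(x)$ and $\phi^1(x)$ are both forbidden to $u$, while $y$'s color is unchanged), and the remaining work is to route the Hamiltonian path through the last block so that it exits at the crossing label matching $\alpha^1$ while still covering all of $G_b$. This is where the Hamiltonian-connectivity facts recorded just before Lemma~\ref{adjexth4} — that in $F^2$ there is a Hamiltonian path from each of $1212,1342,1432$ to every other vertex, together with the analogous statements for $\{1232,1412\}$ and $\{1242,1312\}$, and that $F^1$ is a $6$-cycle — provide exactly the freedom needed to steer the final exit. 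I expect the delicate point to be arranging this steering simultaneously with the forced forward propagation, most cleanly by selecting $\alpha^1$ to be the entry dictated by the last block rather than an arbitrary vertex, and the bulk of the remaining argument to be a short case analysis on the types ($F^1$ versus $F^2$) of the first and last blocks.
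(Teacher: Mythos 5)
Your setup (the blocks $G_i$, the observation that a cross-edge between consecutive blocks forces agreement on both $u$ and $v$, and the forward propagation via Lemma \ref{adjexth4}) matches the paper, and you correctly identify closing the cycle as the crux. But your mechanism for closing it has a genuine gap. You cut $C'$ at a transition where the color of $x$ changes and propose to ``steer'' within the single final block $G_b$ so that it exits at the label matching $\alpha^1$. This cannot work in general: the entry $\alpha^b$ to $G_b$ is whatever the forward propagation through $G_1,\dots,G_{b-1}$ produces, the required exit $\beta^b$ is pinned down by $\alpha^1$, and neither $F^1$ nor $F^2$ is Hamiltonian-connected ($F^1$ is a $6$-cycle, so from a given vertex Hamiltonian paths reach only its two neighbors; in $F^2$ there is, e.g., no Hamiltonian path between the two vertices of the pair $\{1232,1412\}$). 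So for many forced pairs $(\alpha^b,\beta^b)$ the required Hamiltonian path through $G_b$ simply does not exist, and ``selecting $\alpha^1$ to be the entry dictated by the last block'' does not resolve this, since changing $\alpha^1$ changes the entire forward propagation and hence $\alpha^b$. (Also, the first branch of your dichotomy is vacuous: permuting colors shows the color of $x$ is never constant over all proper $4$-colorings.)

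The missing idea is where to cut, and how many blocks to reserve. The paper takes a vertex $z\in V(H')-\{x,y\}$; since $z$'s color must change somewhere along $C'$, there is a transition of $C'$ fixing both $x$ and $y$, and from this one extracts two (or three) \emph{consecutive} blocks whose underlying colorings agree on both $x$ and $y$. Such aligned blocks are joined by a perfect matching of cross-edges (every $uv$-label valid for one is valid for the other), and this buffer is what supplies the steering: in the two-$F^2$ case one routes the intermediate crossing through one of the universal endpoints $1212,1342,1432$, so that the outer, forced connections ($\alpha^{b-1}$ from the propagation and $\beta^b$ from $\alpha^1$) can be arbitrary; the case with $F^1$ blocks needs three reserved blocks and a further case split on the label of $\beta^b$. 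Without reserving such an aligned buffer, the consistency condition around the cycle cannot be met by a single-block argument, so your proof as sketched would fail at the final step.
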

\begin{proof}
Since $h_4(P_4)=1$, we may assume there exists a vertex $z\in V(H')-\{x,y\}$.  Since $h_4(H')=1$, there exists a Hamiltonian cycle $[\phi^1,\ldots,\phi^b]$ through $G^1_4(H')$.  There exists $i$ such that $\phi^i(z)\neq \phi^{i+1}(z)$, in which case $\phi^i(x)=\phi^{i+1}(x)$ and $\phi^i(y)=\phi^{i+1}(y)$.  If there exists an $i$ such that $\phi^i(x)=\phi^{i+1}(x)\neq \phi^i(y)=\phi^{i+1}(y)$, then without loss of generality assume $\phi^{b-1}(x)=\phi^b(x)=1$ and $\phi^{b-1}(y)=\phi^b(y)=2$.  If there exists no such $i$, then there must exist $\ell$ such that $\phi^{\ell}(x)=\phi^{\ell}(y)=\phi^{\ell+1}(x)=\phi^{\ell+1}(y)$, but either $\phi^{\ell+1}(x)=\phi^{\ell+2}(x)\neq \phi^{\ell+2}(y)$ or $\phi^{\ell+1}(y)=\phi^{\ell+2}(y)\neq \phi^{\ell+2}(x)$; without loss of generality assume $\phi^{b-2}(x)=\phi^{b-2}(y)=\phi^{b-1}(x)=\phi^{b-1}(y)=\phi^b(x)=1$ and $\phi^b(y)=2$.  Call this situation Case 1, and call the previously discussed situation Case 2.  To complete the proof, we alter the Hamiltonian cycle through $G^1_4(H')$ into a Hamiltonian cycle through $G^1_4(H)$ such that the extensions of each $\phi^i$ appear consecutively, starting with $\alpha^i$ and ending with $\beta^i$, with $\beta^i$ agreeing with $\alpha^{i+1}$ on $u$ and $v$.

For each $i\in[b]$, let $G^i$ denote the subgraph of $G^1_4(H)$ induced by the extensions of $\phi^i$. By Lemma \ref{adjexth4}, for every $\pi\in V(G^i)$ there exists $\rho\in V(G)-\{\pi\}$ such that there is a Hamiltonian path through $G^i$ from $\pi$ to $\rho$, and $\rho$ is adjacent in $G^1_4(H)$ to some extension of $\phi^{i+1}$.  Let $\alpha^1$ be any coloring in $V(G^1)$ for which there exist distinct colorings $\pi$ and $\rho$ in $V(G^b)$ such that there is a Hamiltonian path through $G^b$ from $\pi$ to $\rho$, and $\rho$ is adjacent in $G^1_4(H)$ to $\alpha^1$.  Letting $m=b-2$ if Case 1 holds and $m=b-3$ if Case 2 holds, order the extensions of $\phi^1,\ldots,\phi^m$, plus $\phi^{m+1}_0$, so that the extensions of each $\phi^i$ form a Hamiltonian path through $G^i$ from $\alpha^i$ to $\beta^i$, with $\beta^i$ adjacent in $G^1_4(H)$ to $\alpha^{i+1}$.  Let $\beta^b$ be the coloring in $V(G^b)$ adjacent in $G^1_4(H)$ to $\alpha^1$.
\begin{case}
We have $m=b-3$ as well as $\phi^{b-2}(x)=\phi^{b-2}(y)=\phi^{b-1}(x)=\phi^{b-1}(y)=\phi^b(x)=1$ and $\phi^b(y)=2$.
\end{case}
Note that $G^{b-2}$ and $G^{b-1}$ both look like $F^1$ from Figure \ref{adjexthfourfig}, while $G^b$ looks like $F^2$.  If we select $\beta^{b-2}$ as a neighbor of $\alpha^{b-2}$ in $G^{b-2}$, $\alpha^{b-1}$ as the coloring in $V(G^{b-1})$ that agrees with $\beta^{b-2}$ on $u$ and $v$, and $\beta^{b-1}$ as some neighbor in $G^{b-1}$ of $\alpha^{b-1}$, then there is a path in $G^1_4(H)$ that first touches every vertex of $G^{b-2}$ and then every vertex of $G^{b-1}$ ($\beta^{b-2}$ is adjacent in $G^1_4(H)$ to $\alpha^{b-1}$ because they only differ on the vertex of $H'$ where $\phi^{b-2}$ and $\phi^{b-1}$ differ).  

If $\beta^b$ uses a color outside of $\{3,4\}$ on $u$ or $v$, then set $\beta^{b-2}$ as a common neighbor in $G^{b-2}$ of $\alpha^{b-2}$ and a coloring $\pi\in V(G^{b-2})$ that colors $u$ and $v$ from $\{3,4\}$, also set $\alpha^{b-1}$ as the coloring in $V(G^{b-1})$ that agrees with $\beta^{b-2}$ on $u$ and $v$, and also set $\beta^{b-1}$ as the coloring in $V(G^{b-1})$ that agrees with $\pi$ on $u$ and $v$.  If $\beta^b$ uses both $3$ and $4$ on $\{u,v\}$, then set $\beta^{b-2}$ as a common neighbor in $G^{b-2}$ of $\alpha^{b-2}$ and a coloring $\rho\in V(G^{b-2})$ satisfying $\rho(u)=2$ and $\rho(v)\in\{3,4\}$, also set $\alpha^{b-1}$ as the coloring in $V(G^{b-1})$ that agrees on $u$ and $v$ with $\beta^{b-2}$, and set $\beta^{b-1}$ as the coloring in $V(G^{b-1})$ that agrees with $\rho$ on $u$ and $v$.  

We complete our Hamiltonian cycle through $G^1_4(H)$ by first taking our path through $G^{b-2}$ and $G^{b-1}$, then setting $\alpha^b$ as the coloring in $V(G^b)$ that agrees $\beta^{b-1}$ on $u$ and $v$ (such an $\alpha^b$ exists because $\phi^b(x)=1$ and $\phi^b(y)=2$ while $\beta^{b-1}$ colors $u$ from $\{2,3,4\}$ and colors $v$ from $\{3,4\}$, and $\beta^{b-1}$ and $\alpha^b$ are adjacent in $G^1_4(H)$ because they only differ on the vertex of $H'$ where $\phi^{b-1}$ and $\phi^b$ differ), and finally finding a Hamiltonian path through $G^b$ (such a path exists: if $\beta^b$ uses a color outside of $\{3,4\}$ on $u$ or $v$, then we selected $\alpha^{b-1}$ to color $u$ and $v$ from $\{3,4\}$, so there exists a Hamiltonian path through $G^b$ from $\alpha^{b-1}$ to any other vertex; if $\beta^b$ colors $u$ and $v$ from $\{3,4\}$, then we selected $\alpha^{b-1}$ to color $u$ with $2$ and $v$ from $\{3,4\}$, so there exists a Hamiltonian path through $G^b$ from $\alpha^{b-1}$ to any coloring that colors $u$ and $v$ from $\{3,4\}$).
\begin{case}
We have $m=b-2$ as well as $\phi^{b-1}(x)=\phi^b(x)=1$ and $\phi^{b-1}(y)=\phi^b(y)=2$.
\end{case}
Note that $G^{b-1}$ and $G^b$ both look like $F^2$ from Figure \ref{adjexthfourfig}.  Notice that if $\pi$ is one of the vertices of $F^2$ labeled $1212$, $1342$, or $1432$, and $\rho$ is any vertex of $F^2$ besides $\pi$, then there is a Hamiltonian path through $F^2$ whose endpoints are $\pi$ and $\rho$; pick $\pi$ to be any element of $\{1212,1342,1432\}$ that disagrees on $uv$ with both $\alpha^{b-1}$ and $\beta^b$.  When traversing the extensions of $\phi^{b-1}$, take the Hamiltonian path through $G^{b-1}$ from $\alpha^{b-1}$ to the coloring $\beta^{b-1}$ corresponding to $\pi$, and when traversing the extensions of $\phi^b$, take the Hamiltonian path through $G^{b-1}$ from the coloring $\alpha^b$ corresponding to $\pi$ to $\beta^b$.  This completes a Hamiltonian cycle through $G^1_4(H)$ because $\beta^{b-1}$ and $\alpha^b$ only disagree on the vertex where $\phi^{b-1}$ and $\phi^b$ disagree, so they are adjacent in $G^1_4(H)$.
\end{proof}
\setcounter{case}{0}
\begin{cor}
If $H$ is obtained from a loopless multigraph $M$ by subdividing each edge of $M$ at least twice, then $h_4(H)=1$.
\end{cor}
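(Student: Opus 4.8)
The plan is to induct along the sequence of subgraphs produced by Observation~\ref{subobs}, using Proposition~\ref{h4subdivide} as the inductive step; this exactly parallels the proof of Corollary~\ref{g3subcor}, with Proposition~\ref{h4subdivide} playing the role of Proposition~\ref{g3subdivide}. First I would apply Observation~\ref{subobs} with $\ell=2$ to obtain a sequence $H_1,\ldots,H_m$ of subgraphs of $H$ such that $H_1$ is a forest, $H_m=H$, and for each $i\in[m-1]$ there are $x_i,y_i\in V(H_i)$ with $H_i\subset^{x_i,y_i}_2 H_{i+1}$. Since $M$ is loopless, the observation guarantees $d_{H_i}(x_i,y_i)>2$, and in particular $x_i\neq y_i$. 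Every $H_i$ is a subgraph of $H$, hence $3$-colorable (as noted at the start of this section) and a fortiori $4$-colorable.

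For the base case I would observe that $H_1$ is a forest, hence $1$-degenerate, so $h_4(H_1)=1$ by the degeneracy bound that $h_k(H')=1$ whenever $H'$ is $(k-3)$-degenerate. For the inductive step I would assume $h_4(H_i)=1$ and unpack the relation $H_i\subset^{x_i,y_i}_2 H_{i+1}$: it says that $H_{i+1}-V(H_i)$ is a path $v_1v_2$ whose two vertices each have degree $2$ in $H_{i+1}$, with $v_1$ adjacent to $x_i$ and $v_2$ adjacent to $y_i$. Thus $H_{i+1}-V(H_i)$ is precisely the single edge $uv$ required by Proposition~\ref{h4subdivide} (take $u=v_1$, $v=v_2$): $u$ has the single $V(H_i)$-neighbor $x_i$, while $v$ has the single $V(H_i)$-neighbor $y_i\in V(H_i)-\{x_i\}$, this last membership being exactly where looplessness enters, via $x_i\neq y_i$. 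Applying Proposition~\ref{h4subdivide} with $H'=H_i$ and $H=H_{i+1}$ then yields $h_4(H_{i+1})=1$, and iterating to $i=m$ gives $h_4(H)=h_4(H_m)=1$.

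Because all of the genuinely delicate work, namely arranging the extensions of each coloring so that a Hamiltonian path through one fibre can be closed up with the next, is already packaged inside Lemma~\ref{adjexth4} and Proposition~\ref{h4subdivide}, I expect no real obstacle at this stage: the corollary is only the inductive glue. The single point requiring care is verifying that the hypotheses of Proposition~\ref{h4subdivide} hold at every step, and the decisive ingredient there is the looplessness of $M$, which forces $x_i\neq y_i$ (through $d_{H_i}(x_i,y_i)>2$) and so allows $H_{i+1}-V(H_i)$ to be realized as the two-endpoint edge that the proposition demands; a loop would instead give $x_i=y_i$ and break the argument, just as it does for $g_3$.
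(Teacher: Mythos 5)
Your proof is correct and is essentially identical to the paper's: both invoke Observation \ref{subobs} with $\ell=2$ to get the chain $H_1\subset^{x_i,y_i}_2\cdots\subset H_m=H$ with $H_1$ a forest, use looplessness to guarantee $d_{H_i}(x_i,y_i)\geq 3$ (hence $x_i\neq y_i$, which is all Proposition \ref{h4subdivide} needs), and induct via that proposition. Your added remarks on where looplessness enters and on the degeneracy bound for the forest base case are accurate elaborations of what the paper leaves implicit.
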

\begin{proof}
Let $H_1,\ldots,H_m$ be a sequence of subgraphs of $H$ described in Observation \ref{subobs}; since $M$ is loopless, for each $i\in[m-1]$ we have $H_i\subset^{x_i,y_i}_{\ell}H_{i+1}$ where $d_{H_i}(x_i,y_i)\geq 3$.  We have $h_4(H_1)=1$ since $H_1$ is a forest, and for $i\in [m-1]$, if $h_4(H_i)=1$, then $h_4(H_{i+1})=1$, by Proposition \ref{h4subdivide}.  Hence $h_4(H)=1$.
\end{proof}
Let $H'$ be a $3$-colorable subgraph of a graph $H$ such that $H-V(H')$ consists of an edge $uwv$, with $u$ having a single neighbor $x\in V(H')$ and $v$ having a single neighbor $y\in V(H')-N(x)$.  For proper $3$-colorings $\psi^1$ and $\psi^2$ of $H'$ satisfying $\psi^1(x)=\psi^2(x)=1$ and $\psi^i(y)=i$, Figure \ref{adjexththreefig} shows each subgraph $F^i$ of $G^2_3(H)$ induced by the set of proper $3$-colorings $\psi^i_1,\psi^i_2,\ldots$ of $H$ that agree with $\psi^i$ on $H'$, with node $\psi^i_{\ell}$ of $F^i$ labeled $\psi^i_{\ell}(x)\psi^i_{\ell}(u)\psi^i_{\ell}(w)\psi^i_{\ell}(v)\psi^i_{\ell}(y)$.  Note that if $\pi$ is one of the vertices of $F^1$ such that $\pi(u)=\pi(v)$, and $\rho$ is any vertex of $F^2$ besides $\pi$, then there is a Hamiltonian path through $F^2$ whose endpoints are $\pi$ and $\rho$; if instead $\pi$ is in $\{12131,13121\}$ but $\rho$ is not, then again there is a Hamiltonian path through $F^2$ whose endpoints are $\pi$ and $\rho$.  Also note that if $\pi$ is one of the vertices of $F^2$ labeled $12312$, $13132$, or $13232$, and $\rho$ is any vertex of $F^2$ besides $\pi$, then there is a Hamiltonian path through $F^2$ whose endpoints are $\pi$ and $\rho$; if instead $\pi$ is in $\{12132,13212\}$ but $\rho$ is not, then again there is a Hamiltonian path through $F^2$ whose endpoints are $\pi$ and $\rho$.
\begin{figure}[h]
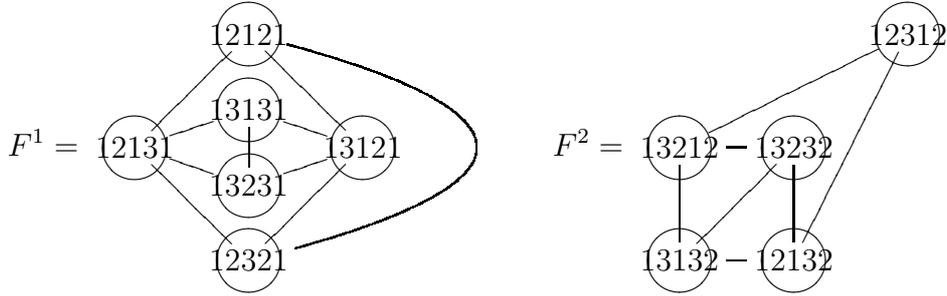

\centering
\[
F^1=
\xygraph{ !{<0cm,0cm>;<.5cm,0cm>:<0cm,.5cm>::}
!{(0,3) }*+[o][F-:<3pt>]{12121}="a"
!{(0,-3) }*+[o][F-:<3pt>]{12321}="b"
!{(0,1) }*+[o][F-:<3pt>]{13131}="c"
!{(3,0) }*+[o][F-:<3pt>]{13121}="d"
!{(0,-1) }*+[o][F-:<3pt>]{13231}="e"
!{(-3,0) }*+[o][F-:<3pt>]{12131}="f"
"a"-"d"-"b"-"f"-"a"-@/^3cm/"b" "c"-"d"-"e"-"f"-"c"-"e"
}
\hspace{1cm}
F^2=
\xygraph{ !{<0cm,0cm>;<.5cm,0cm>:<0cm,.5cm>::}
!{(3,3) }*+[o][F-:<3pt>]{12312}="a"
!{(-3,0) }*+[o][F-:<3pt>]{13212}="b"
!{(0,0) }*+[o][F-:<3pt>]{13232}="c"
!{(0,-3) }*+[o][F-:<3pt>]{12132}="d"
!{(-3,-3) }*+[o][F-:<3pt>]{13132}="e"
"a"-"b"-"c"-"d"-"e"-"b" "a"-"d" "c"-"e"
}\]
\caption{Two induced subgraphs of $G^2_3(H)$.}
\label{adjexththreefig}
\end{figure}
\begin{lem}\label{adjexth3}
Let $H'$ be a $3$-colorable subgraph of a graph $H$ such that $H-V(H')$ consists of an edge $uwv$, with $u$ having a single neighbor $x\in V(H')$ and $v$ having a single neighbor $y\in V(H')-N(x)$, and let $\phi$ and $\phi'$ be proper $3$-colorings of $H'$ adjacent in $G^2_3(H')$.  Letting $G$ denote the subgraph of $G^2_3(H)$ induced by the proper $3$-colorings of $H$ that agree on $H'$ with $\phi$, for every $\pi\in V(G)$ there exists $\rho\in V(G)-\{\pi\}$ such that there is a Hamiltonian path through $G$ from $\pi$ to $\rho$, and $\rho$ is adjacent in $G^2_3(H)$ to some proper $3$-coloring of $H$ that agrees with $\phi'$ on $H'$.
\end{lem}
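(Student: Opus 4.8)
The plan is to mirror the proof of Lemma~\ref{adjexth4}, reducing everything to a finite case analysis that depends only on the restriction of $\phi$ to $\{x,y\}$. First I would normalize the situation. Since $\phi$ and $\phi'$ are adjacent in $G^2_3(H')$, they differ on a connected subgraph of $H'$ with at most two vertices; because $y\notin N(x)$, this set cannot contain both $x$ and $y$, so $\phi$ and $\phi'$ agree on at least one of $x$ and $y$. I would then assume without loss of generality that $\phi(x)=1$, and split into the case $\phi(y)=1$ and the case $\phi(y)\ne 1$ (taking $\phi(y)=2$ without loss of generality). The vertices of $G$ are exactly the proper colorings of the path $uwv$ that extend $\phi$, subject to $\rho(u)\ne 1$, $\rho(u)\ne\rho(w)$, $\rho(w)\ne\rho(v)$, and $\rho(v)\ne\phi(y)$; a direct enumeration, recorded in Figure~\ref{adjexththreefig}, shows that $G\cong F^1$ in the first case and $G\cong F^2$ in the second, and that two such colorings are adjacent in $G^2_3(H)$ exactly when they differ only on one of the connected vertex sets $\{u\},\{w\},\{v\},\{u,w\},\{w,v\}$ (never on $\{u,v\}$, which is disconnected).

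Next I would translate the target condition into a constraint on the chosen endpoint $\rho$. For $\rho$ to be adjacent in $G^2_3(H)$ to a coloring agreeing with $\phi'$ on $H'$, it suffices that $\rho(u)\ne\phi'(x)$ and $\rho(v)\ne\phi'(y)$: then $\phi'$ extends to a proper coloring $\rho'$ of $H$ by coloring $u,w,v$ exactly as $\rho$ does, and $\rho'$ differs from $\rho$ only on the connected $\le 2$-vertex set where $\phi$ and $\phi'$ differ, so $\rho$ and $\rho'$ are adjacent. Since $\rho$ already extends $\phi$, it automatically satisfies $\rho(u)\ne 1$ and $\rho(v)\ne\phi(y)$; combined with the fact that $\phi$ and $\phi'$ agree on at least one of $x,y$, the target condition imposes a genuinely new restriction on at most one of $\rho(u),\rho(v)$, and that restriction pins the coordinate to the unique remaining color in $[3]$. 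Consequently the set of admissible endpoints is either all of $V(G)$ (when $\phi=\phi'$ on both $x$ and $y$) or a fixed three-element subset of $V(G)$ (when they differ on exactly one of $x,y$), and in each instance this subset is read directly off the labels in Figure~\ref{adjexththreefig}.

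Finally I would invoke the Hamiltonian-path facts recorded just before the lemma: $F^1$ admits a Hamiltonian path between any ordered pair of its vertices except possibly $\{12131,13121\}$, and $F^2$ likewise except possibly $\{12132,13212\}$. Given $\pi$, I would choose $\rho$ from the admissible set so as to avoid the unique forbidden partner of $\pi$, and then run the Hamiltonian path from $\pi$ to $\rho$ through $G$. The main obstacle, and the only real content, is verifying that such a $\rho$ always exists: one must check, for each of the two forbidden endpoint-pairs and each of the admissible target-sets, that the admissible set is never reduced to just $\pi$'s forbidden partner. This is a short finite check, and it goes through because each admissible target-set leaves at least two vertices other than $\pi$, while at most one vertex is barred as a Hamiltonian-path endpoint from $\pi$; hence a valid $\rho$ always remains, completing the proof.
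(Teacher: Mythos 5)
Your overall strategy matches the paper's: normalize $\phi(x)=1$, identify $G$ with $F^1$ or $F^2$ of Figure \ref{adjexththreefig} according to whether $\phi(y)=\phi(x)$, observe that it suffices to find $\rho$ with $\rho(u)\neq\phi'(x)$ and $\rho(v)\neq\phi'(y)$, and then invoke the recorded Hamiltonian-path facts about $F^1$ and $F^2$. But your endgame has a genuine gap: the structural claim that the admissible endpoint set is ``either all of $V(G)$ or a fixed three-element subset'' is false, and the counting argument built on it (``at least two vertices other than $\pi$ remain, at most one is barred'') fails in exactly the cases where it is needed. Two concrete failures: (i) the hypothesis allows $x=y$ (the condition is $y\in V(H')-N(x)$, not $V(H')-N[x]$, and the $x=y$ case is precisely what Observation \ref{subobs} produces from loops of $M$); there your opening claim that $\phi$ and $\phi'$ agree on at least one of $x$ and $y$ breaks down, both coordinates $\rho(u)$ and $\rho(v)$ get pinned, and the admissible set in $F^1$ shrinks to the two colorings with $uwv\in\{313,323\}$. (ii) Even with $x\neq y$: if $\phi(x)=\phi'(x)=1$, $\phi(y)=2$, $\phi'(y)=3$, then $\rho(v)$ is pinned to $1$, and properness of the coloring of the path $uwv$ leaves only the two admissible colorings $uwv\in\{231,321\}$, i.e.\ $\{12312,13212\}$ --- and $13212$ is one of the two vertices of $F^2$ that is \emph{not} a universal Hamiltonian-path endpoint. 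So when $\pi$ itself is admissible, only one candidate remains, and your count by itself cannot rule out that this candidate is $\pi$'s forbidden partner.

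The lemma still holds in these cases, but for a reason your argument does not supply: in every admissible set that actually arises, at least one member is a ``universal'' endpoint of its figure (a vertex with $\pi(u)=\pi(v)$ in $F^1$; one of $12312$, $13132$, $13232$ in $F^2$), so from any $\pi$ a Hamiltonian path reaches at least one admissible vertex other than $\pi$. This is precisely what the paper verifies by explicitly exhibiting, in each sub-case, two admissible target colorings (e.g.\ $uwv\in\{313,323\}$ or $uwv\in\{231,321\}$) and checking against the listed facts that one of them always works. To repair your proof, add the $x=y$ case to your initial normalization and replace the generic counting with that short explicit verification.
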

\begin{proof}
Since $\phi$ and $\phi'$ are adjacent in $G^2_3(H')$, they differ on either one vertex or adjacent vertices of $H'$.  Since $x$ and $y$ are nonadjacent in $H'$ but $\phi$ and $\phi'$ are adjacent in $G^2_3(H')$, we either have $x=y$, or at least one of $\phi(x)=\phi'(x)$ or $\phi(y)=\phi'(y)$; in the former case, we assume without loss of generality that $\phi(x)=1$, and in the latter case, we assume without loss of generality that $\phi(x)=\phi'(x)=1$.  Let $\pi\in V(G)$; we find $\rho\in V(G)-\{q\}$ such that there is a Hamiltonian path through $G$ from $\pi$ to $\rho$, with $\rho(u)\neq \phi'(x)$ and $\rho(v)\neq \phi'(y)$ (allowing $\phi'$ to be extended to some proper $k$-coloring $\rho'$ of $H$ by coloring $uv$ like $\rho$, so $\rho$ and $\rho'$ will be adjacent in $G^2_3(H)$ since they only differ where $\phi$ and $\phi'$ differ).

First suppose $\phi(y)=1$, in which case $G$ looks like $F^1$ from Figure \ref{adjexththreefig}.  Either $\phi'(y)=1$ or $\phi'(y)\neq 1$, in which case without loss of generality assume $\phi'(y)=2$.  Thus we have $\phi(x)=\phi(y)=1$ as well as either $\phi'(x)=\phi'(y)\in[2]$, or $\phi'(x)=1$ and $\phi'(y)=2$.  In either case, there are extensions of both $\phi$ and $\phi'$ to $H$ that label $uwv$ as $313$ and $323$; for each $\pi\in V(G)$ there is a Hamiltonian path through $G$ from $\pi$ to at least one of these vertices, which we set as $\rho$.

Now suppose $\phi(y)\neq 1$ (so $x\neq y$, and $\phi'(x)=1$ by assumption); without loss of generality assume $\phi(y)=2$, in which case $G$ looks like $F^2$ from Figure \ref{adjexththreefig}.  If $\phi'(y)\in[2]$, then there are extensions of both $\phi$ and $\phi'$ that label $uwv$ as $313$ and $323$; for each $\pi\in V(G)$ there is a Hamiltonian path through $G$ from $\pi$ to at least one of these vertices, which we set as $\rho$.  If $\phi'(y)=3$, then there are extensions of both $\phi$ and $\phi'$ that label $uwv$ as $231$ and $321$; for each $\pi\in V(G)$ there is a Hamiltonian path through $G$ from $\pi$ to at least one of these vertices, which we set as $\rho$.
\end{proof}
\begin{prop}\label{h3subdivide}
Let $H'$ be a $3$-colorable subgraph of a graph $H$ such that $H-V(H')$ consists of a path $uwv$, with $w$ having no neighbor in $H'$, $u$ having a single neighbor $x\in V(H')$, $v$ having a single neighbor $y\in V(H')-N(x)$, and there existing a vertex $z\in V(H')-N[x]\cup N[y]$.  If $h_3(H')\leq 2$, then $h_3(H)\leq 2$.
\end{prop}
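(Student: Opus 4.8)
The plan is to mimic the proof of Proposition \ref{h4subdivide}, using Lemma \ref{adjexth3} in place of Lemma \ref{adjexth4}. Since $h_3(H')\leq 2$, there is a Hamiltonian cycle $C'=[\phi^1,\ldots,\phi^b]$ through $G^2_3(H')$. For each $i\in[b]$ let $G^i$ be the subgraph of $G^2_3(H)$ induced by the proper $3$-colorings of $H$ that agree with $\phi^i$ on $H'$; as computed in Figure \ref{adjexththreefig}, $G^i$ is isomorphic to $F^1$ when $\phi^i(x)=\phi^i(y)$ and to $F^2$ when $\phi^i(x)\neq\phi^i(y)$. I would build a Hamiltonian cycle through $G^2_3(H)$ by listing the extensions of each $\phi^i$ consecutively as a Hamiltonian path through $G^i$ from some $\alpha^i$ to some $\beta^i$, with $\beta^i$ adjacent in $G^2_3(H)$ to $\alpha^{i+1}$ (indices read modulo $b$). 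Lemma \ref{adjexth3} supplies exactly this: for the transition $\phi^i\to\phi^{i+1}$ and any entry vertex $\alpha^i\in V(G^i)$, it produces an exit vertex $\beta^i$ together with a Hamiltonian path through $G^i$ from $\alpha^i$ to $\beta^i$ such that $\beta^i$ is adjacent to an extension of $\phi^{i+1}$, which I take to be $\alpha^{i+1}$.

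First I would use the vertex $z$ to control the closing transition. Since $z\notin N[x]\cup N[y]$, the vertex $z$ is nonadjacent in $H$ to each of $u$, $w$, $v$, $x$, and $y$; as $C'$ visits colorings in which $z$ takes different colors, some transition of $C'$ recolors a connected set of at most two vertices containing $z$, and such a set avoids both $x$ and $y$. After a cyclic rotation I may assume this transition is $\phi^b\to\phi^1$, so that $\phi^b(x)=\phi^1(x)$ and $\phi^b(y)=\phi^1(y)$; in particular $G^b$ and $G^1$ are of the same type. The key consequence is that an extension of $\phi^b$ and an extension of $\phi^1$ are adjacent in $G^2_3(H)$ precisely when they color $uwv$ identically, since the only other possible difference lies on the at-most-two-vertex set containing $z$, which is connected and disjoint from $\{u,w,v\}$. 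This matched-coloring adjacency is the edge I will use to close the cycle.

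To exploit this I would record the flexible exit vertices noted beneath Figure \ref{adjexththreefig}: in $F^1$ the vertices with $\pi(u)=\pi(v)$, and in $F^2$ the vertices labeled $12312$, $13132$, and $13232$, each admit a Hamiltonian path to every other vertex of their graph. Choosing $\alpha^1$ to be such a flexible vertex of $G^1$ and taking $\beta^b$ to be the extension of $\phi^b$ with the same $uwv$-coloring makes $\beta^b$ a flexible vertex of $G^b$ adjacent to $\alpha^1$, so that a Hamiltonian path through $G^b$ ending at $\beta^b$ exists from any admissible start $\alpha^b\neq\beta^b$.

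The main obstacle, exactly as in Proposition \ref{h4subdivide}, is guaranteeing closure: after running the forward cascade $\alpha^1\to\beta^1\to\alpha^2\to\cdots\to\beta^{b-1}\to\alpha^b$, I must ensure that the forced start $\alpha^b$ of $G^b$ differs from $\beta^b$, so that the flexible-endpoint Hamiltonian path through $G^b$ from $\alpha^b$ to $\beta^b$ genuinely exists. Because $F^1$ and $F^2$ each provide several flexible vertices, and because the proof of Lemma \ref{adjexth3} typically offers two candidate exit colorings at the final step, I expect to eliminate the single possible clash by a short case analysis on the color pattern at the closing transition (whether $\phi^b(x)=\phi^b(y)$, and whether some earlier transition fixes both $x$ and $y$ while $\phi(x)\neq\phi(y)$), paralleling Cases $1$ and $2$ of Proposition \ref{h4subdivide} and treating the last two or three graphs $G^{b-2},G^{b-1},G^b$ together. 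This bookkeeping, needed to avoid the degenerate situation where the forced start equals the designated exit, is the delicate part; everything else follows routinely from Lemma \ref{adjexth3}.
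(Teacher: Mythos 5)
Your architecture matches the paper's: a cascade of Hamiltonian paths through the subgraphs $G^i$ glued together via Lemma \ref{adjexth3}, with the vertex $z$ used to locate a transition of $C'$ that fixes both $x$ and $y$, and the flexible vertices of $F^1$ and $F^2$ used to close the cycle. But the closing step --- the one part the paper itself flags as the tricky part --- is not actually carried out: you end with ``I expect to eliminate the single possible clash by a short case analysis,'' and with your setup that clash is not so easily eliminated. You rotate so that the $x,y$-fixing transition is the closing edge $\phi^b\to\phi^1$, choose $\alpha^1$ flexible, and set $\beta^b$ to be its $uwv$-match in $G^b$; you then need the forced entry point $\alpha^b$ of $G^b$ (determined by the cascade through $\beta^{b-1}$) to differ from $\beta^b$. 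Lemma \ref{adjexth3} only guarantees that from a given entry of $G^{b-1}$ there is a Hamiltonian path to \emph{at least one} of two designated exits, so you cannot in general steer $\alpha^b$ away from $\beta^b$; and changing $\alpha^1$ to dodge the clash changes the entire cascade, hence changes $\alpha^b$ as well, so there is no clean conflict-free choice.

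The paper resolves this by placing the $x,y$-fixing transition at $\phi^{b-1}\to\phi^b$ rather than at the closing edge. Then $G^{b-1}$ and $G^b$ carry the same $uwv$-labels, every matched pair of extensions is adjacent in $G^2_3(H)$ (they differ only where $\phi^{b-1}$ and $\phi^b$ differ), and one picks a flexible label $\pi$ whose copies disagree on $u$, $w$, $v$ with both the forced entry $\alpha^{b-1}$ of $G^{b-1}$ and the forced exit $\beta^b$ of $G^b$ (the extension adjacent to $\alpha^1$); this is possible because $F^1$ has four flexible vertices and $F^2$ has three, while only two must be avoided. Flexibility then yields a Hamiltonian path through $G^{b-1}$ from $\alpha^{b-1}$ to the copy of $\pi$, a single matched step into $G^b$, and a Hamiltonian path through $G^b$ from the copy of $\pi$ to $\beta^b$. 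To complete your argument you would either need to reproduce this relocation of the special transition or strengthen Lemma \ref{adjexth3} so that it supplies two distinct admissible exits (as Lemma \ref{adjext2} does in the choosability setting); as written, the closure of the cycle is a genuine gap.
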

\begin{proof}
Since $h_3(H')\leq 2$, there exists a Hamiltonian cycle $[\phi^1,\ldots,\phi^b]$ through $G^2_3(H')$.  There exists $i$ such that $\phi^i(z)\neq \phi^{i+1}(z)$, in which case $\phi^i(x)=\phi^{i+1}(x)$ and $\phi^i(y)=\phi^{i+1}(y)$ since neither $x$ nor $y$ is $z$ or is adjacent to $z$.  If there exists an $i$ such that $\phi^i(x)=\phi^i(y)=\phi^{i+1}(x)=\phi^{i+1}(y)$, then without loss of generality assume $\phi^{b-1}(x)=\phi^{b-1}(y)=\phi^b(x)=\phi^b(y)=1$.  If there exists no such $i$, then there must exist $\ell$ such that $\phi^{\ell}(x)=\phi^{\ell+1}(x)\neq \phi^{\ell}(y)=\phi^{\ell+1}(y)$; without loss of generality assume $\phi^{b-1}(x)=\phi^b(x)=1$ and $\phi^{b-1}(y)=\phi^b(y)=2$.  To complete the proof, we alter the Hamiltonian cycle through $G^2_3(H')$ into a Hamiltonian cycle through $G^2_3(H)$ such that the extensions of each $\phi^i$ appear consecutively, starting with $\alpha^i$ and ending with $\beta^i$, with $\beta^i$ agreeing with $\alpha^{i+1}$ on $u$, $v$, and $w$.

For each $i\in[b]$, let $G^i$ denote the subgraph of $G^2_3(H)$ induced by the extensions of $\phi^i$. By Lemma \ref{adjexth3}, for every $\pi\in V(G^i)$ there exists $\rho\in V(G)-\{\pi\}$ such that there is a Hamiltonian path through $G^i$ from $\pi$ to $\rho$, and $\rho$ is adjacent in $G^2_3(H)$ to some extension of $\phi^{i+1}$.  Set $\alpha^1$ as any coloring in $V(G^1)$ for which there exist distinct colorings $\pi$ and $\rho$ in $V(G^b)$ such that there is a Hamiltonian path through $G^b$ from $\pi$ to $\rho$, and $\rho$ is adjacent in $G^2_3(H)$ to $\alpha^1$.  Order the extensions of $\phi^1,\ldots,\phi^{b-2}$, plus $\alpha^{b-1}$, so that the extensions of each $\phi^i$ form a Hamiltonian path through $G^i$ from $\alpha^i$ to $\beta^i$, with $\beta^i$ adjacent in $G^2_3(H)$ to $\alpha^{i+1}$.  Let $\beta^b$ be the coloring in $V(G^b)$ adjacent in $G^2_3(H)$ to $\alpha^1$.

By assumption we have either $\phi^{b-1}(x)=\phi^{b-1}(y)=\phi^b(x)=\phi^b(y)=1$, or $\phi^{b-1}(x)=\phi^b(x)=1$ and $\phi^{b-1}(y)=\phi^b(y)=2$.  In the former case, $G^{b-1}$ and $G^b$ both look like $F^1$ from Figure \ref{adjexththreefig}.  Notice that if $\pi$ is one of the vertices of $F^1$ labeled $12121$, $12321$, $13131$, or $13231$, and $\rho$ is any vertex of $F^1$ besides $\pi$, then there is a Hamiltonian path through $F^1$ whose endpoints are $\pi$ and $\rho$; pick $\pi$ to be any element of $\{12121,12321,13131,13231\}$ that disagrees with $\alpha^{b-1}$ and $\beta^{b}$ on $u$, $w$, and $v$.  In the latter case, $G^{b-1}$ and $G^b$ both look like $F^2$ from Figure \ref{adjexththreefig}.  Notice that if $\pi$ is one of the vertices of $F^2$ labeled $12312$, $13132$, or $13232$, and $\rho$ is any vertex of $F^2$ besides $\pi$, then there is a Hamiltonian path through $F^2$ whose endpoints are $\pi$ and $\rho$; pick $\pi$ to be any element of $\{12312,13132,13232\}$ that disagrees with $\alpha^{b-1}$ and $\beta^{b}$ on $u$, $w$, and $v$.  In either case, we can traverse the extensions of $\phi^B{b-1}$ by taking the Hamiltonian path through $G^{b-1}$ from $\alpha^{b-1}$ to the coloring $\beta^{b-1}$ corresponding to $\pi$, and we can traverse the extensions of $\phi^b$ by taking the Hamiltonian path through $G^{b-1}$ from the coloring $\alpha^b$ corresponding to $\pi$ to $\beta^b$.  This completes a Hamiltonian cycle through $G^2_3(H)$ because $\beta^{b-1}$ and $\alpha^b$ only disagree on the vertex where $\phi^{b-1}$ and $\phi^b$ disagree, so they are adjacent in $G^2_3(H)$.
\end{proof}
\begin{cor}
If $H$ is obtained from a multigraph $M$ by subdividing each edge of $M$ at least three times, then $h_3(H)\leq 2$.
\end{cor}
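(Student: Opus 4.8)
The plan is to mirror the two preceding corollaries, running an induction fed by Observation~\ref{subobs} with $\ell=3$ and by Proposition~\ref{h3subdivide}. With $\ell=3$ every component of $H-V(F)$ is a path on three vertices, so each step $H_i\subset^{x_i,y_i}_3 H_{i+1}$ attaches a path $v_1v_2v_3$ in which $v_2$ has no neighbor in $H_i$, $v_1$ has the single neighbor $x_i$, and $v_3$ has the single neighbor $y_i$; this is precisely the configuration $uwv$ required by Proposition~\ref{h3subdivide}, and each $H_i$ is $3$-colorable as a subgraph of the $3$-colorable graph $H$. I would first reduce to $M$ connected (otherwise take the maximum of $h_3$ over the components of $H$). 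Since the deleted segments are independent, I may restore them in any order, so I would choose an order in which every restoration that merely joins two components of the current graph precedes every restoration that closes a cycle. At the end of this ``tree phase'' the current graph is a subdivided spanning tree of $M$ together with some pendant stubs, hence a tree, so $h_3\le 2$ by the tree result of \cite{CM} quoted in Section~\ref{sec:grayintro}; this is the base case, and the cycle-closing restorations are the inductive steps.

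The core of the argument is checking, at each inductive step, the two hypotheses of Proposition~\ref{h3subdivide} that are not purely structural: that $y_i\in V(H_i)-N(x_i)$ and that some $z\in V(H_i)-(N[x_i]\cup N[y_i])$ exists. The first is easy, since a cycle-closing step with $x_i\ne y_i$ forces $x_i$ and $y_i$ into one component at distance at least $4$, while a loop gives $x_i=y_i\notin N(x_i)$. For the second I would use a geodesic: in a cycle-closing step with $x_i\ne y_i$, any path from $x_i$ to $y_i$ in $H_i$ must reroute around the missing segment through at least one fully restored subdivided edge of $M$, which has length at least $4$, so $d_{H_i}(x_i,y_i)\ge 4$. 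Taking the vertex $z$ at distance $2$ from $x_i$ along a shortest such path then gives $z\notin N[x_i]$ and, since $d_{H_i}(y_i,z)=d_{H_i}(x_i,y_i)-2\ge 2$, also $z\notin N[y_i]$, as needed.

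The step I expect to be delicate is the bookkeeping for loops and for the smallest graphs, where the existence of $z$ is not automatic. When $x_i=y_i$ one needs only a vertex at distance at least $2$ from $x_i$, which is available as soon as $H_i$ is more than a star centered at $x_i$; after the tree phase this holds whenever the relevant component of $M$ has at least two vertices. The one genuine exception is a bouquet, a single vertex of $M$ carrying loops: then the tree phase is empty, but the first restored segment already produces a cycle $C_n$ with $n\ge 4$, for which $h_3(C_n)=2$ by Proposition~\ref{treecyclecol}, and every later loop then has this cycle on hand to supply $z$. Once these degenerate cases are settled, each inductive step upgrades $h_3(H_i)\le 2$ to $h_3(H_{i+1})\le 2$ via Proposition~\ref{h3subdivide}, yielding $h_3(H)\le 2$; getting the loop and first-cycle accounting right, rather than the routine geodesic step, is where the real care is needed.
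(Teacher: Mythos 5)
Your overall route is the same as the paper's: build $H$ up from a forest by restoring the deleted three-vertex segments one at a time (Observation \ref{subobs} with $\ell=3$) and carry $h_3\le 2$ across each restoration with Proposition \ref{h3subdivide}, after dispatching the degenerate small cases. Your reorganization --- postponing all cycle-closing restorations and taking the post-tree-phase graph as the base case --- is a harmless variant, and your geodesic verification of the hypotheses of Proposition \ref{h3subdivide} is more explicit than the paper's, which simply asserts in Observation \ref{subobs} that either $x_i=y_i$ or $d_{H_i}(x_i,y_i)>\ell$.

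The genuine problem is your treatment of loops, where both of your explicit claims are false. A loop of $M$ at a vertex $u$ subdivided $t$ times yields $x_i=y_i=u$ only when $t=3$, since only then is the entire interior of the loop the deleted segment. If $t=4$, say the loop becomes $u w_1 w_2 w_3 w_4 u$ and $w_1w_2w_3$ is deleted, then $x_i=u$ and $y_i=w_4$ are adjacent in $H_i$, so the hypothesis $y\in V(H')-N(x)$ of Proposition \ref{h3subdivide} fails outright and the inductive step cannot be performed. Your geodesic bound $d_{H_i}(x_i,y_i)\ge 4$ also breaks for loops: the reroute around the missing segment need not pass through a different fully restored subdivided edge, because it can travel around the remainder of the \emph{same} loop, which has length $t-3$ (hence length $1$, $2$, or $3$ for $t=4,5,6$). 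For $t=5,6$ the nonadjacency hypothesis survives, but your recipe of taking $z$ at distance two along a shortest $x_i,y_i$-path no longer guarantees $z\notin N[y_i]$. To be fair, the paper's Observation \ref{subobs} makes the same unsupported claim, so the blind spot is inherited; but since you spell the justification out, the failure for loops subdivided four to six times is visible in your write-up and needs a separate argument (or a different choice of deleted segment or decomposition for such loops).
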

\begin{proof}
Since $h_3(P_n)=1$ for $n\geq 5$ and $h_3(C_n)=2$ for $n\geq 4$, we may assume $M$ has more than one edge.  Let $H_1,\ldots,H_m$ be a sequence of subgraphs of $H$ described in Observation \ref{subobs}; for each $i\in[m-1]$ we have $H_i\subset^{x_i,y_i}_{\ell}H_{i+1}$ where either $x_i=y_i$ or $d_{H_i}(x_i,y_i)\geq 4$, and $V(H_i)-N[x^i]\cup N[y^i]\neq\emptyset$.  We have $h_3(H_1)\leq 2$ since $H_1$ is a forest, and for $i\in [m-1]$, if $h_3(H_i)\leq 2$, then $h_3(H_{i+1})\leq 2$, by Proposition \ref{h3subdivide}.  Hence $h_3(H)\leq 2$.
\end{proof}

\section{Complete Multipartite Graphs}
\label{sec:multi}
In this section we prove Theorem \ref{multi}, concerning complete multipartite graphs.  To prove our first result, we use the following theorem of Kompel'makher and Liskovets from 1975 \cite{KL}.  Given a set $T$ of transpositions acting on permutations of $[n]$, let $G(T)$ be the graph whose vertices are the elements of $[n]$, with edges joining $b$ and $c$ if and only if some transposition in $T$ swaps the values in positions $b$ and $c$; we call $T$ a \emph{basis of transpositions} if $G(T)$ is a tree.  If $T$ is a basis of transpositions, then the permutations of $[n]$ can be ordered cyclically so that consecutive permutations differ by a transposition in $T$.  Note that if $T$ consists of all transpositions involving the first position, then $G(T)$ is a star, so $T$ is a basis of transpositions.
\begin{thm}
If $H=K_{m_1,\ldots ,m_k}$, where $k\geq 2$ and $m_1\leq\cdots\leq m_k$, then $g_k(H)=h_k(H)=m_1+m_k$.
\end{thm}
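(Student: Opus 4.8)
The plan is to prove $g_k(H)=h_k(H)=m_1+m_k$ for $H=K_{m_1,\dots,m_k}$ in two halves: a lower bound showing $G^{j}_k(H)$ is disconnected whenever $j<m_1+m_k$, and a matching upper bound exhibiting a cyclic Gray code (hence Hamiltonicity, hence connectedness) in $G^{m_1+m_k}_k(H)$. Since $g_k(H)\le h_k(H)$ always, it suffices to prove $g_k(H)\ge m_1+m_k$ and $h_k(H)\le m_1+m_k$. First I would set up notation: label the parts $V_1,\dots,V_k$ with $|V_i|=m_i$. Because $H$ is complete multipartite, any proper $k$-coloring must use all $k$ colors, and every part must be \emph{monochromatic} (two vertices in the same part are nonadjacent and may share a color, but two vertices in distinct parts are adjacent, so across the whole graph the $k$ color classes refine to a partition that, together with the $k$ parts and a counting argument, forces each color to appear on exactly one part). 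Concretely, a proper $k$-coloring of $K_{m_1,\dots,m_k}$ is exactly a bijection from the $k$ parts to the $k$ colors; there are $k!$ of them. This identification with permutations of $[k]$ is the conceptual engine of both halves.

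For the lower bound, I would fix a coloring $\phi$ (a bijection part$\to$color) and ask what a connected recoloring subgraph on $j$ vertices can achieve. To change $\phi$ into any \emph{different} proper coloring, at least two parts must swap their colors, say parts $V_a$ and $V_b$; but recoloring must then alter \emph{every} vertex of $V_a$ and \emph{every} vertex of $V_b$ (each part is monochromatic before and after, with different colors), so the set of changed vertices contains all of $V_a\cup V_b$, a set of size $|V_a|+|V_b|\ge m_1+m_k$ only in the worst case — more precisely, the minimum over pairs of $|V_a|+|V_b|$ is $m_1+m_2$. Here lies a subtlety I must handle carefully: the changed set could touch several parts, and the \emph{connected} subgraph $H''$ on at most $j$ vertices must \emph{contain} all changed vertices while being connected in $H$. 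The key observation is that within a single part there are no edges, so a connected subgraph of $H$ meeting a part $V_i$ in $t\ge 1$ vertices must also meet at least one other part; thus a connected subgraph containing all of $V_a$ and all of $V_b$ has at least $m_a+m_b\ge m_1+m_2$ vertices. To get the sharp bound $m_1+m_k$ rather than $m_1+m_2$, I expect to argue that the \emph{hardest} transition to localize is the one that swaps the largest part: from the permutation viewpoint, realizing an arbitrary permutation as a product of transpositions requires, at some step, moving the color off the largest part, and any single localized recoloring step that involves $V_k$ forces all $m_k$ of its vertices into $H''$ together with at least $m_1$ vertices of whichever part trades colors with it.

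For the upper bound, I would invoke the Kompel'makher–Liskovets theorem cited just before the statement: taking $T$ to be all transpositions involving the part of largest size (or the first position), $G(T)$ is a star, so $T$ is a basis of transpositions, and the $k!$ permutations of $[k]$ can be ordered cyclically so that consecutive permutations differ by a single transposition in $T$. Each such transposition swaps the colors on two parts, one of which is the fixed largest part $V_k$; the set of vertices changed between consecutive colorings is therefore contained in $V_k\cup V_i$ for some $i$, which is \emph{connected} in $H$ (complete bipartite between the two parts) and has exactly $m_k+m_i\le m_k+m_1$ vertices — wait, this gives $m_k+m_i$, which can exceed $m_1+m_k$. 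To repair this I would instead build the star $G(T)$ around the \emph{smallest} part $V_1$: then every transposition swaps $V_1$ with some $V_i$, the changed set is $V_1\cup V_i$ of size $m_1+m_i\le m_1+m_k$, and this is always at most $m_1+m_k$, giving a cyclic Gray code in $G^{m_1+m_k}_k(H)$. The main obstacle is precisely this matching of the lower and upper constants: I must verify that centering the transposition basis at the smallest part yields the bound $m_1+m_k$ on the upper side while the lower bound argument genuinely forces $m_1+m_k$ (not merely $m_1+m_2$) to be necessary — i.e., that \emph{some} unavoidable transition has cost exactly $m_1+m_k$. The resolution I anticipate is that, since $G(T)$ must be connected (a tree) to realize all permutations, some transposition must involve the largest part $V_k$, and that transposition swaps $V_k$ with a part of size at least $m_1$, forcing at least $m_1+m_k$ changed vertices; confirming this lower bound with the connectivity-of-$H''$ argument above is where the real care is needed.
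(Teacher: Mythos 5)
Your proposal is correct and follows essentially the same route as the paper: identify the proper $k$-colorings with bijections from parts to colors, get the lower bound by noting that any edge of $G^j_k(H)$ recoloring the largest part must change all of $X_k$ together with all of some other part (hence at least $m_1+m_k$ vertices in a connected subgraph), and get the upper bound from the Kompel'makher--Liskovets cyclic ordering using the star of transpositions centered at the smallest part, so that each step changes only $X_1\cup X_i$, which is connected and has at most $m_1+m_k$ vertices. The self-corrections in your writeup (from $m_1+m_2$ to $m_1+m_k$, and from centering the star at the largest part to the smallest) land exactly on the paper's argument.
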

\begin{proof}
Since $g_k(H)\leq h_k(H)$, it suffices to show $g_k(H)\geq m_1+m_k$ and $h_k(H)\leq m_1+m_k$.  Let the partite sets of $H$ be $X_1,\ldots ,X_k$, with $|X_i|=m_i$ for each $i\in [k]$.  The only proper $k$-colorings of $H$ assign the elements of $[k]$ to the partite sets $X_1,\ldots ,X_k$ in a one-to-one fashion, coloring each partite set monochromatically.  Thus the proper $k$-colorings of $H$ correspond in a one-to-one fashion with the proper $k$-colorings of $K_k$.

For colorings differing on $X_k$ to be in the same component of $G^j_k(H)$, there must be adjacent vertices in $G^j_k(H)$ that differ on $X_k$ and some other partite set.  Since $X_1$ is the smallest partite set, $g_k(H)\geq m_1+m_k$.

By \cite{KL}, there is a cyclic ordering $C$ of the permutations of $[k]$ such that consecutive permutations differ in the first position and exactly one other position.  When $j\geq m_1+m_k$, the ordering $C$ corresponds to a Hamiltonian cycle through $G_k^j(H)$, since successive steps are performed by interchanging the colors on the smallest partite set and one other partite set.  Hence $h_k(H)\leq m_1+m_k$.
\end{proof}
\begin{thm}
If $H$ is a complete $k$-partite graph and $\ell>k$, then $g_{\ell}(H)=1$.
\end{thm}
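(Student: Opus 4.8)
The plan is to prove the equivalent statement that $G^1_\ell(H)=G_\ell(H)$ is connected. Write the partite sets of $H$ as $X_1,\ldots,X_k$, and observe the key structural fact: since any two vertices in distinct parts are adjacent, in every proper coloring the set $U_i$ of colors appearing on $X_i$ is disjoint from the set $U_j$ appearing on $X_j$ whenever $i\neq j$. Thus a proper $\ell$-coloring of $H$ is exactly a choice of pairwise disjoint color sets $U_1,\ldots,U_k$ together with an otherwise arbitrary coloring of each $X_i$ from $U_i$, there being no constraints within a part. I would fix the canonical coloring $\kappa$ assigning color $i$ to every vertex of $X_i$, and show that every proper coloring can be joined to $\kappa$ by a path in $G_\ell(H)$; this suffices. (The degeneracy-type corollaries of the previous section do not reach $\ell=k+1$ here, since a complete $k$-partite graph can be far from $(\ell-2)$-degenerate, so I would argue directly.)

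The first step is to make every part monochromatic. Given any proper coloring and any part $X_i$, pick a color $t_i\in U_i$ and recolor the vertices of $X_i$ one at a time to $t_i$. Each such single-vertex recoloring stays proper, since the only neighbors of an $X_i$-vertex lie in the other parts, whose color sets avoid $t_i$ because they are disjoint from $U_i$. Doing this for each part in turn produces, along a path in $G_\ell(H)$, a coloring in which each $X_i$ is monochromatic in a color $t_i$, with $t_1,\ldots,t_k$ distinct. This step uses no assumption on $\ell$ beyond $\ell\geq k$.

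The second step, where the hypothesis $\ell>k$ is essential, is to permute the part-colors into canonical position. At any monochromatic configuration exactly $k$ colors are in use, so since $\ell>k$ there is always a free color lying in no $U_i$. Moving an entire part $X_i$ from its current color to a free color $f$ is realized by recoloring its vertices one at a time to $f$; this is a valid walk in $G_\ell(H)$ because $f$ lies in no other part's color set, and it returns us to a monochromatic configuration, now with the old color of $X_i$ freed. I would then sort greedily: for $i=1,\ldots,k$, if $X_i$ does not already carry color $i$, first displace to a free color whichever not-yet-fixed part currently holds color $i$, and then move $X_i$ onto color $i$. After processing every part we arrive at $\kappa$.

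The routine content consists of the properness checks for the individual single-vertex recolorings. The point requiring care, and the main (though elementary) obstacle, is the bookkeeping in the second step: verifying that a free color is available at every stage, which holds because each move preserves the monochromatic-configuration structure and leaves $\ell-k\geq 1$ colors free, and verifying that displacing the occupant of color $i$ before claiming it never disturbs the already-fixed parts $X_1,\ldots,X_{i-1}$. Once this is handled, connectedness of $G^1_\ell(H)$ yields $g_\ell(H)=1$.
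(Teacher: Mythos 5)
Your proposal is correct and takes essentially the same approach as the paper's own proof: first collapse each partite set to a single color one vertex at a time (using disjointness of the parts' color sets), then exploit the free color guaranteed by $\ell>k$ as a buffer to permute the part-colors into a canonical arrangement. The paper performs the same two steps, merely phrased as connecting every coloring to a coloring using only $k$ colors and then connecting all such colorings to each other.
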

\begin{proof}
We prove the theorem by first showing that any proper $\ell$-coloring of $H$ is in the same component of $G^1_{\ell}(H)$ as some proper $k$-coloring of $H$, then showing that all proper $k$-colorings of $H$ are in the same component of $G^1_{\ell}(H)$.  For the first claim, if $\phi$ is a proper $\ell$-coloring of $H$, then $\phi$ assigns no color to multiple partite sets, so each partite set $X$ can be recolored monochromatically to some color assigned by $\phi$ to one of its vertices.  For the second claim, suppose $\phi$ only uses a set $S$ of $k$ colors, and note that the color $b$ given to any partite set $X$ could be changed one vertex at a time to any color $c\notin S$.  If $X$ is to be recolored with some color $d$ already assigned to some partite set $Y$, then recolor $Y$ with $c$ before recoloring $X$ with $d$.  Since no proper coloring gives the same color to multiple partite sets, this process can be applied to each partite set until the desired coloring is obtained.
\end{proof}
Given distinct colors $b$ and $c$, let $Q_n(b,c)$ be the $n$-dimensional hypercube with a vertex for each $n$-bit binary string from the alphabet $\{b,c\}$ and an edge between vertices differing in exactly one coordinate.  As in \cite{CM}, we shall use the well-known facts that $Q_n(b,c)$ is Hamiltonian for all $n\geq 2$, and $Q_n(b,c)$ contains a Hamiltonian path from $b\cdots b$ to $c\cdots c$ if and only if $n$ is odd.
\begin{thm}
Let $H$ be a complete $k$-partite graph.  Then $h_{k+1}(H)=1$ if each partite set has an odd number of vertices, and $h_{k+1}(H)=2$ otherwise.
\end{thm}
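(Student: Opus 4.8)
The plan is to exploit the rigid structure of proper $(k+1)$-colorings of $H=K_{m_1,\ldots,m_k}$. Writing $X_1,\ldots,X_k$ for the parts, any proper coloring gives the parts pairwise disjoint nonempty color sets, so with only one spare color either every part is monochromatic and one color is unused (the \emph{base} colorings), or exactly one part $X_i$ carries two colors $\{b,c\}$ while the other $k-1$ parts are monochromatic in the remaining $k-1$ colors. The base colorings correspond bijectively to proper $(k+1)$-colorings of $K_k$, for which $G^1_{k+1}(K_k)$ is Hamiltonian by the earlier result $h_{k+1}(K_k)=1$. The observation driving everything is that, once the monochromatic colors on all parts but $X_i$ and the pair $\{b,c\}$ on $X_i$ are fixed, the colorings of $X_i$ form a copy of $Q_{m_i}(b,c)$ whose two monochromatic corners $b\cdots b$ and $c\cdots c$ are base colorings differing only by recoloring $X_i$ between $b$ and the spare color $c$; moreover, in $G^1_{k+1}(H)$ an interior (genuinely two-colored) vertex of this cube has \emph{all} of its neighbors inside the same cube, since every color is already in use and no monochromatic part can be recolored.

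For the lower bound when some $m_i$ is even, I would show directly that $G^1_{k+1}(H)$ is not Hamiltonian. Fix a frame splitting an even part $X_i$ between $\{b,c\}$. By the observation above, the only gateways from the interior of $Q_{m_i}(b,c)$ to the rest of $G^1_{k+1}(H)$ are its two corners, so in any Hamiltonian cycle the restriction to this cube must be a single Hamiltonian path from $b\cdots b$ to $c\cdots c$. Since $m_i$ is even, no such corner-to-corner Hamiltonian path exists, a contradiction; hence $h_{k+1}(H)\geq 2$.

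When every $m_i$ is odd I would try to build a Hamiltonian cycle in $G^1_{k+1}(H)$ by threading hypercube paths onto a cyclic listing of the base colorings. Using the Kompel'makher--Liskovets theorem \cite{KL} exactly as in the proof of $g_k(H)=h_k(H)=m_1+m_k$, I would take a cyclic order of the base colorings in which consecutive ones differ by recoloring a single part to the current spare color; a step across $X_i$ from $b$ to $c$ is realized by a Hamiltonian path of $Q_{m_i}(b,c)$ from $b\cdots b$ to $c\cdots c$, which exists precisely because $m_i$ is odd. Splicing such a path in place of each skeleton edge lists the two-colored extensions of that frame consecutively and lands on the next base coloring. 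For the matching upper bound $h_{k+1}(H)\leq 2$ in the even case, I would instead work in $G^2_{k+1}(H)$, where a connected two-vertex subgraph is an edge of $H$, so one may simultaneously recolor a vertex of the split part and a vertex of a neighboring monochromatic part. These moves give exactly the extra gateways the previous paragraph showed were missing: one can run a Hamiltonian path of an even cube $Q_{m_i}(b,c)$ from a corner to an interior vertex and then escape into an adjacent frame by a two-vertex move, so the rigid corner-to-corner requirement is dropped, and combined with the bound $h_{k+1}(H)\geq 2$ this pins the value at exactly $2$.

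The main obstacle I anticipate is the global bookkeeping in the Hamiltonicity constructions. A single cyclic listing of the base colorings supplies only one splicing edge per base coloring, whereas each base coloring is a corner of one cube for \emph{every} part; reconciling the requirement that each cube's interior be traversed with the requirement that each base coloring be visited exactly once is the delicate heart of the argument, and it is precisely where the parities of the $m_i$ and the added freedom of $j=2$ must be spent. Concretely, closing the resulting Hamiltonian path into a cycle—arranging that the final extension of each frame agrees on the split part with the first extension of the next frame, and that the two distinguished base colorings are joined across a cube—will, as the author stresses elsewhere in the paper, be the surprisingly tricky step, and I would expect the bulk of the work to go into choosing the entry and exit corners so that this closure and the exact coverage of all two-colored colorings hold simultaneously.
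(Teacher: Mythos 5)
Your decomposition into base colorings plus one split part, the hypercube $Q_{m_i}(b,c)$ structure on the extensions of a fixed frame, the corner-gateway argument for the lower bound when some $m_i$ is even, and the threading of corner-to-corner Hamiltonian hypercube paths onto a cyclic listing of the base colorings in the all-odd case are exactly the paper's proof. (One cosmetic slip: the skeleton cycle you need is a Hamiltonian cycle of $G^1_{k+1}(K_k)$, which the paper gets from $h_{k+1}(K_k)=1$; the Kompel'makher--Liskovets theorem is invoked only for the $k$-color case, where the moves are transpositions of a permutation rather than recolorings to a spare color.)

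The one genuine gap is in the $h_{k+1}(H)\le 2$ construction. Observing that a two-vertex move lets you ``escape from an interior vertex into an adjacent frame'' does not by itself produce a closed walk visiting every coloring exactly once: you must still decide, for each cube, how its $2^{m_{a_i}}-2$ interior vertices are distributed among the visits, and a single pass around the skeleton gives each cube only one entry and one exit. The paper's resolution is to traverse the skeleton cycle $[\phi_1,\ldots,\phi_n]$ \emph{twice}: each cube's Hamiltonian cycle is broken into two paths $R_i$ (starting at $b_i\cdots b_i$) and $S_i$ (starting at $c_i\cdots c_i$); the first lap splices in the paths $R_i$, with all inter-frame transitions single-vertex moves, and the second lap splices in $S_i$ with the corner $c_i\cdots c_i$ deleted, where consecutive frames are joined by recoloring an edge $uv$ with $u\in X_{a_i}$ and $v\in X_{a_{i+1}}$ (adjacent because $a_i\ne a_{i+1}$) --- this is precisely where $j=2$ is spent. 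Without some such device giving each cube two gateways, the bookkeeping you correctly identify as the crux does not close up, so you should either adopt the double-traversal trick or supply an alternative that specifies the partition of each cube's interior.
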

\begin{proof}
Let $H$ have partite sets $X_1,\ldots ,X_k$, and let $K_k$ have vertex set $[k]$.  Set $n=(k+1)!$.  Since $h_{k+1}(K_k)=1$, there exists a Hamiltonian cycle $[\phi_1,\ldots ,\phi_n]$ through $G^1_{k+1}(K_k)$.  For $i\in [n]$, let $a_i$ be the vertex of $K_k$ that receives different colors from $\phi_i$ and $\phi_{i+1}$, with $\phi_i(a_i)=b_i$ and $\phi_{i+1}(a_i)=c_i$; note that $a_i\neq a_{i+1}$ (if $a_i=a_{i+1}$, then we would have $\phi_{i+2}=\phi_i$ if $c_{i+1}=b_i$, and $\phi_{i+2}=\phi_{i+1}$ if $c_{i+1}=c_i$, with $\phi_{i+2}$ using color $c_{i+1}$ on both $a_i$ and some neighbor of $a_i$ if $c_{i+1}\in [k+1]-\{b_i,c_i\}$).  If $R$ is a path $\alpha_1,\ldots ,\alpha_m$ in $G^1_2(X_{a_i})$ such that each $\alpha_{\ell}$ colors the vertices of $X_{a_i}$ using colors $b_i$ and $c_i$, then let $\phi_i\cdot R$ denote the path $\pi_1,\ldots,\pi_m$ in $G^1_{k+1}(H)$ such that $\pi_{\ell}(v)=\alpha_{\ell}(v)$ if $v\in X_{a_i}$, and $\pi_{\ell}(v)=\phi_i(d)$ if $v\in X_d$ for $d\in [k]-\{a_i\}$.  Indeed, $\phi_i\cdot R$ is a path in $G^1_{k+1}(H)$ because $\pi_{\ell}$ and $\pi_{\ell+1}$ differ only on the vertex of $X_{a_i}$ where $\alpha_{\ell}$ and $\alpha_{\ell+1}$ differ.

For $i\in[n]$, view each vertex of the hypercube $Q_{|X_{a_i}|}(b_i,c_i)$ as a coloring of $X_{a_i}$ using the colors $b_i$ and $c_i$ (so the $j$th vertex of $X_{a_i}$ is colored according to the $j$th coordinate of the given hypercube vertex).  Hence paths in $Q_{|X_{a_i}|}(b_i,c_i)$ correspond to paths in $G^1_2(X_{a_i})$, since adjacent vertices $\alpha$ and $\beta$ in $Q_{|X_{a_i}|}(b_i,c_i)$ differ in exactly one coordinate, which is the only vertex of $X_{a_i}$ on which the colorings of $X_{a_i}$ corresponding to $\alpha$ and $\beta$ differ. 

We are now ready to prove the theorem via three claims. 
\begin{claim}
We have $h_{k+1}(H)\leq 2$.
\end{claim}
There exists a Hamiltonian cycle through $Q_{|X_{a_i}|}(b_i,c_i)$ for each $i\in [n]$; break that cycle up into two directed paths $R_i$ and $S_i$, with $R_i$ starting at $b_i\cdots b_i$ and $S_i$ starting at $c_i\cdots c_i$.  Note that the other endpoint of $R_i$ uses $b_i$ exactly once, and the other endpoint of $S_i$ uses $c_i$ exactly once.  Let $S'_i$ be $S_i$ with $c_i\cdots c_i$ deleted, so $S'_i$ starts by using $b_i$ exactly once.  To prove the claim, we show that $[\phi_1\cdot R_1,\ldots ,\phi_n\cdot R_n,\phi_1\cdot S'_1,\ldots ,\phi_n\cdot S'_n]$ is a Hamiltonian cycle through $G^2_{k+1}(H)$:
\begin{itemize}
\item
Every proper $(k+1)$-coloring $\phi$ of $H$ is included exactly once: the proper $(k+1)$-colorings of $H$ that use only $k$ colors correspond to the proper $(k+1)$-colorings of $K_k$ (since no color can appear in multiple partite sets), which in turn correspond to the initial colorings of $\phi_i\cdot R_i$ for $i\in [n]$.  The proper $(k+1)$-colorings of $H$ that use all $k+1$ colors can be uniquely obtained from our Hamiltonian cycle $[\phi_1,\ldots ,\phi_n]$ through $G^1_{k+1}(K_k)$ by coloring $X_{a_i}$ using both $b_i$ and $c_i$ (the ways of doing which correspond to the vertices of $Q_{|X_{a_i}|}(b_i,c_i)$ besides $b_i\cdots b_i$ and $c_i\cdots c_i$) while coloring $X_d$ monochromatically with $\phi_i(d)$ for each $d\neq a_i$; thus these colorings of $H$ correspond to those in $\phi_i\cdot R_i$ or $\phi_i\cdot S'_i$ for $i\in [n]$, minus the initial colorings of $\phi_i\cdot R_i$.
\item
For $i\in[n]$, $\phi_i\cdot R_i$ and $\phi_i\cdot S'_i$ are paths in $G^1_{k+1}(H)$. 
\item
For $i\in [n-1]$, the last coloring of $\phi_i\cdot R_i$ is adjacent in $G^1_{k+1}(H)$ to the first coloring of $\phi_{i+1}\cdot R_{i+1}$ because they differ only on the lone vertex in $X_{a_i}$ colored $b_i$ by the last vertex in $R_i$.
\item 
The last coloring of $\phi_n\cdot R_n$ is adjacent in $G^2_{k+1}(H)$ to the first coloring of $\phi_1\cdot S'_1$ because they differ only on the edge $uv$, where $u$ is the lone vertex in $X_{a_n}$ colored $b_n$ by the last vertex in $R_n$, and $v$ is the lone vertex in $X_1$ colored $b_1$ by the first vertex in $S'_1$ ($u$ and $v$ are adjacent because $a_n\neq a_1$). 
\item
For $i\in [n-1]$, the last coloring of $\phi_i\cdot S'_i$ is adjacent in $G^2_{k+1}(H)$ to the first coloring of $\phi_{i+1}\cdot S'_{i+1}$ because they differ only on the edge $uv$, where $u$ is the lone vertex in $X_{a_i}$ colored $c_i$ by the last vertex in $S'_i$, and $v$ is the lone vertex in $X_{a_{i+1}}$ colored $b_{i+1}$ by the first vertex in $S'_{i+1}$ ($u$ and $v$ are adjacent because $a_i\neq a_{i+1}$).  
\item
The last coloring of $\phi_n\cdot S'_n$ is adjacent in $G^1_{k+1}(H)$ to the first coloring of $\phi_1\cdot R_1$ because they differ only on the lone vertex in $X_{a_n}$ colored $c_n$ by the last vertex in $S'_n$.
\end{itemize}
\begin{claim}
If $|X_i|$ is odd for each $i\in [k]$, then $h_{k+1}(H)=1$.
\end{claim}
If each partite set of $H$ has an odd number of vertices, then there exists a Hamiltonian path $T_i$ from $b_i\cdots b_i$ to $c_i\cdots c_i$ in the hypercube $Q_{|X_{a_i}|}(b_i,c_i)$ for each $i\in [n]$; let $T'_i$ be $T_i$ with $c_i\cdots c_i$ deleted, so the last vertex of $T'_i$ uses $b_i$ exactly once.  To prove the claim, we show that $[\phi_1\cdot T'_1,\ldots ,\phi_n\cdot T'_n]$ is a Hamiltonian cycle through $G^1_{k+1}(H)$:
\begin{itemize}
\item
Every proper $(k+1)$-coloring $\phi$ of $H$ is included exactly once: the proper $(k+1)$-colorings of $H$ that use only $k$ colors correspond to the proper $(k+1)$-colorings of $K_k$, which in turn correspond to the initial colorings of $\phi_i\cdot T'_i$ for $i\in [n]$.  The proper $(k+1)$-colorings of $H$ that use all $k+1$ colors can be uniquely obtained from our Hamiltonian cycle $[\phi_1,\ldots ,\phi_n]$ through $G^1_{k+1}(K_k)$ by coloring $X_{a_i}$ using both $b_i$ and $c_i$ (the ways of doing which correspond to the vertices of $Q_{|X_{a_i}|}(b_i,c_i)$ besides $b_i\cdots b_i$ and $c_i\cdots c_i$) while coloring $X_d$ monochromatically with $\phi_i(d)$ for $d\neq a_i$; thus these colorings of $H$ correspond to those in $\phi_i\cdot T'_i$ for $i\in [n]$, minus the initial colorings of $\phi_i\cdot T'_i$.
\item
For $i\in[n]$, $\phi_i\cdot T'_i$ is a path in $G^1_{k+1}(H)$. 
\item
For $i\in [n]$, the last coloring of $\phi_i\cdot T'_i$ is adjacent in $G^1_{k+1}(H)$ to the first coloring of $\phi_{i+1}\cdot T'_{i+1}$ (letting $\phi_{n+1}=\phi_1$ and $T'_{n+1}=T'_1$) because they differ only on the lone vertex in $X_{a_i}$ colored $b_i$ by the last vertex in $T'_i$.
\end{itemize}
\begin{claim}
If $h_{k+1}(H)=1$, then $|X_i|$ is odd for each $i\in[k]$.
\end{claim}
Let $i\in [k]$.  Either $|X_i|=1$, or there exists a proper $(k+1)$-coloring $\phi$ of $H$ that uses distinct colors $b$ and $c$ on $X_i$.  Note that $\phi$ must color each vertex of $X_i$ with $b$ or $c$, and each partite set besides $X_i$ must receive exactly one color, which cannot appear elsewhere (there are $k-1$ partite sets besides $X_i$, and they must be colored with the $k-1$ colors of $[k+1]-\{b,c\}$ in order for $\phi$ to be a proper $(k+1)$-coloring of $H$).  If $\phi'$ is adjacent to $\phi$ in $G^1_{k+1}(H)$, then $\phi'$ must disagree with $\phi$ on $X_i$ and agree with $\phi$ outside of $X_i$ (if $\phi$ and $\phi'$ agreed on $X_i$, then they would have to disagree on multiple partite sets besides $X_i$, in which case they wouldn't be adjacent in $G^1_{k+1}(H)$).  Therefore, if $W$ is the set of $(k+1)$-colorings of $H$ that agree with $\phi$ outside of $X_i$, then there are only two colorings $\pi$ and $\alpha$ in $W$ that have neighbors in $G^1_{k+1}(H)$ outside of $W$: one colors $X_i$ monochromatically with $b$, and the other colors $X_i$ monochromatically with $c$.  Thus any Hamiltonian cycle through $G^1_{k+1}(H)$ must contain a $\pi,\alpha$-path $P$ whose vertices are the colorings agreeing with $\phi$ outside of $X_i$.  Hence the restriction of $P$ to $X_i$ yields a Hamiltonian path through the hypercube $Q_{|X_{a_i}|}(b,c)$ between $b\cdots b$ and $c\cdots c$, so $|X_i|$ must be odd.
\end{proof}

\section{Acknowledgements} 
\label{sec:acknowledgements}
The author acknowledges support from National Science Foundation grant DMS 08-38434\linebreak EMSW21-MCTP: Research Experience for Graduate Students.

\end{document}